\renewcommand{\d}{\mathrm{d}}
\newcommand{\e}{\mathrm{e}}
\newcommand{\R}{\mathbb{R}}
\newcommand{\C}{\mathbb{C}}
\newcommand{\N}{\mathbb{N}}
\newcommand{\bg}{\bm{g}}
\newcommand{\bi}{{\bm{i}}}
\newcommand{\bj}{{\bm{j}}}
\newcommand{\bn}{\bm{n}}
\newcommand{\bs}{\bm{s}}
\newcommand{\bu}{\bm{u}}
\newcommand{\bv}{\bm{v}}
\newcommand{\bw}{\bm{w}}
\newcommand{\bx}{\bm{x}}
\newcommand{\by}{\bm{y}}
\newcommand{\bxi}{\bm{\xi}}
\newcommand{\hA}{\mathcal{A}}
\newcommand{\hI}{\mathcal{I}}
\newcommand{\hL}{{\mathcal{L}}}
\newcommand{\hN}{\mathcal{N}}
\newcommand{\hT}{\mathcal{T}}
\newcommand{\hX}{\mathcal{X}}
\newcommand{\whX}{\widehat{X}}
\newcommand{\hY}{\mathcal{Y}}
\newcommand{\hZ}{\mathcal{Z}}
\newcommand{\bN}{\bm{N}}
\newcommand{\kp}{\kappa}
\newcommand{\dt}{\tau}
\newcommand{\opnorm}[1]{|\!|\!| #1 |\!|\!|}
\newcommand{\Cp}{C_\text{\rm per}}
\newtheorem{assumption}{Assumption}
\newtheorem{example}{Example}[section]
\newtheorem{remark}{Remark}
\title{Maximum bound principles for a class of semilinear parabolic equations and exponential time differencing schemes}
\author{
Qiang Du\footnotemark[2]
\and Lili Ju\footnotemark[3]
\and Xiao Li\footnotemark[4]
\and Zhonghua Qiao\footnotemark[5]
}
\begin{document}

\maketitle

\renewcommand{\thefootnote}{\fnsymbol{footnote}}

\footnotetext[2]
{Department of Applied Physics and Applied Mathematics and Data Science Institute, Columbia University, New York, NY 10027, USA (qd2125@columbia.edu).
Q. Du's work is partially supported by US National Science Foundation grant DMS-1719699,
US ARO MURI grant W911NF-15-1-0562 and US AFOSR MURI Center for Material Failure Prediction Through Peridynamics.}
\footnotetext[3]
{Department of Mathematics, University of South Carolina, Columbia, SC 29208, USA (ju@math.sc.edu).
L. Ju's work is partially supported by US National Science Foundation grant DMS-1818438
and US Department of Energy grants DE-SC0016540 and DE-SC0020270.}
\footnotetext[4]
{Department of Mathematics, University of South Carolina, Columbia, SC 29208, USA.
Current address:
Department of Applied Mathematics, The Hong Kong Polytechnic University, Hung Hom, Kowloon, Hong Kong (xiao1li@polyu.edu.hk).
X. Li's work is partially supported by National Natural Science Foundation of China grant 11801024.}
\footnotetext[5]
{Department of Applied Mathematics, The Hong Kong Polytechnic University, Hung Hom, Kowloon, Hong Kong (zhonghua.qiao@polyu.edu.hk).
Z. Qiao's work is partially supported by the Hong Kong Research Council GRF grants 15302214 and 15325816
and the Hong Kong Polytechnic University fund 1-ZE33.}

\renewcommand{\thefootnote}{\arabic{footnote}}

\begin{abstract}
The ubiquity of semilinear parabolic equations has been illustrated in their numerous applications
ranging from physics, biology, to materials and social sciences.
In this paper, we consider a practically desirable property for a class of semilinear parabolic equations
of the abstract form $u_t=\hL u+f[u]$ with $\hL$ being a linear dissipative operator
and $f$ being a nonlinear operator in space,
namely a time-invariant maximum bound principle, in the sense that
the time-dependent solution $u$ preserves for all time a uniform pointwise bound in absolute value
imposed by its initial and boundary conditions.
We first study an analytical framework for some sufficient conditions on $\hL$ and $f$
that lead to such a maximum bound principle for the time-continuous dynamic system of infinite or finite dimensions.
Then, we utilize a suitable exponential time differencing approach with a properly chosen generator of
contraction semigroup to develop first- and second-order accurate temporal discretization schemes, that satisfy
the maximum bound principle unconditionally in the time-discrete setting.
Error estimates of the proposed schemes are derived along with their energy stability.
Extensions to vector- and matrix-valued systems are also discussed.
We demonstrate that the abstract framework and analysis techniques developed here
offer an effective and unified approach to study the maximum bound principle of the abstract evolution equation
that cover a wide variety of well-known models and their numerical discretization schemes.
Some numerical experiments are also carried out to verify the theoretical results.
\end{abstract}

\begin{keywords}
Semilinear parabolic equation, maximum bound principle,
numerical approximation, exponential time differencing, energy stability, error estimate.
\end{keywords}

\begin{AMS}
35B50, 35K55, 65M12, 65R20
\end{AMS}

\pagestyle{myheadings}
\thispagestyle{plain}
\markboth{Q. DU, L. JU, X. LI, AND Z. QIAO}
{Maximum Bound Principles of Semilinear Parabolic Equations and ETD Schemes}

\section{Introduction}

Semilinear parabolic equations of the form
\begin{equation}
\label{intro_model}
u_t=\hL u+f[u],
\end{equation}
with $u$ being a time-dependent quantity of interests defined over a spatial domain  $\Omega$,
$\hL$ being a linear classic elliptic operator or its nonlocal variant,
and $f$ representing a nonlinear operator,
have been used to model numerous phenomena in nature.
Many model equations like \eqref{intro_model} and their solutions often satisfy some properties
such as maximum principle, comparison principle, existence of invariant regions, and energy decay.
These properties represent important physical features
and are also essential for mathematical analysis and numerical simulations.
For instance, classic reaction-diffusion equations can be seen as special cases of \eqref{intro_model}
where $\hL$ is given by a diffusion operator and $f$ is a reaction source:
the diffusion process causes the concentration $u$ of some substance to spread in space
and the reaction drives the dynamics based on the concentration values.
One illustration is the dimensionless ignition model \cite{KaPo80}
for a supercritical high activation energy thermal explosion of a solid fuel
in a bounded container, described by
\begin{equation}
\label{intro_exp}
u_t=\Delta u+\e^u.
\end{equation}
Since the reaction term $\e^u$ is always positive,
the solution must reach its minimum either at the initial time or on the boundary of the domain,
which is a popular form of the well-studied \emph{maximum principle} for parabolic equations.
Another popular example of \eqref{intro_model} is the so-called Allen--Cahn equation
 \cite{AlCa79},
which takes on the form
\begin{equation}
\label{intro_AllenCahn}
u_t=\varepsilon^2\Delta u+u-u^3,
\end{equation}
with $\varepsilon>0$ reflecting the width of the transition regions.
The equation is also a special case of the Ginzburg--Landau theory
that has been introduced earlier in the modeling of superconductivity \cite{GiLa50} and other phase transition problems.
It is well-known that the equation \eqref{intro_AllenCahn} and its more general form (as discussed later)
hold a \emph{maximum bound principle} (MBP) \cite{DuGuPe92,EvSoSo92}:
if the initial data and/or the boundary values are pointwisely bounded by $1$ in absolute value,
then the absolute value of the solution is also bounded by $1$ everywhere and for all time.
This MBP is related to the conventional version of maximum principle described for \eqref{intro_exp},
but also differs somewhat in scope, so we use a new name to distinguish them.
For the scalar model like the equation \eqref{intro_AllenCahn},
it is equivalent to the existence of special {\it upper} and {\it lower solutions}
(the constant functions $\overline{u}\equiv 1$ and $\underline{u}\equiv-1$ respectively)
of \eqref{intro_AllenCahn} under Dirichlet or homogeneous Neumann boundary condition \cite{Pao92}.
Moreover, it can be also described as the equation \eqref{intro_AllenCahn}
having a time-invariant region \cite{Amann78,Kuiper80,Smoller94}
since the region $[-1,1]$, of the value of the solution, remains unchanged during the time evolution.
In this paper, we consider an abstract form of evolution equations given by \eqref{intro_model}
and focus on the problems having the MBP or the existence of the special form of the invariant region,
namely, an upper bound on the pointwise absolute value of the solution
(suitable defined for vector- and matrix-valued quantities).
We note that the MBP is weaker than the conventional maximum principle in the sense that
a problem satisfying a maximum principle must satisfy an MBP.
Thus, the study of the conventional maximum principle,
particularly with respect to the linear operator $\hL$, plays relevant and important roles.

Whether the equation \eqref{intro_model} has a maximum principle or not
depends highly on the property of the linear operator $\hL$.
Concerning the linear operator $\hL$,
one can find from the standard textbooks on partial differential equations (see, e.g., \cite{Evans00}) that
the equation \eqref{intro_model} with the uniformly elliptic linear operator $\hL$ satisfies a maximum principle.
During the past several decades,
there have also been many studies devoted to the maximum principles
for numerical approximations of linear elliptic operators. While it is too numerous to
detail them all here,  a partial list of earlier works includes the cases for
finite difference method \cite{BrHu64,Ciarlet70,Price68,Varga66},
finite element method \cite{BaHi15,BrKoKr08,BuEr04,CiRa73,KaKoKr07},
collocation method \cite{Yanik87,Yanik89},
and finite volume method \cite{YuYu18}.
In \cite{Ciarlet70},
a systematic analysis was presented on some sufficient conditions for the finite difference operator
to satisfy the weak and strong maximum principles in the discrete sense.
There were also works devoted to general analysis on algebraic properties of the approximating operators,
which consist of the discrete maximum principles as special cases \cite{FaKoSz13,FaKoSz15,MiHo12}.
Extensions to parabolic cases have also been made.
For example,
\cite{Karafiat91} provided some sufficient conditions for the discrete maximum principles
of the forward and backward Euler time-stepping methods,
and additional researches can be found, e.g.,
in \cite{FaHo06,FaKaKo12,KaKo15,LiIt01,LiYu14,YaXiQiXu16}.
In addition to the maximum principles for
both linear elliptic differential operators and their finite dimensional discretizations,
there are also some recent studies on the nonlocal analogues
for linear nonlocal integral operators \cite{DuTaTiYa19,NoZh18,TiJuDu15}.

With the linear operator $\hL$ satisfying the conditions to yield the maximum principle,
a suitable nonlinear term may lead to the existence of time-invariant regions of \eqref{intro_model}.
The MBP, that is a special invariant region
of the Allen--Cahn equation \eqref{intro_AllenCahn}, was proved in \cite{EvSoSo92}.
An important and natural  question for numerical analysis is
whether such an MBP could be preserved
by some time-stepping schemes for discretizing \eqref{intro_AllenCahn}.
This question has been studied in a variety of works recently.
In \cite{StVo15}, the discrete MBPs of
a finite difference semi-discrete scheme
and its fully discrete approximations with forward and backward Euler time-stepping methods
were obtained for \eqref{intro_AllenCahn} in one-dimensional space.
Later, the first-order stabilized implicit-explicit schemes
with finite difference spatial discretization were proved to preserve the MBP \cite{TaYa16},
which was then generalized by \cite{ShTaYa16} to the case with more general nonlinear terms.
The discrete MBP was also obtained in \cite{YaDuZh18}
by proving the uniform boundedness of the average $L^p$ norm of the solution
 for any even integer $p$,
followed by passing the limit as $p$ goes to infinity.
A first-order exponential time differencing (ETD) scheme (or say, exponential integrator scheme)
in the space-continuous setting was analyzed in \cite{DuZh05},
where some properties of the heat kernel were used.
In addition, the MBP-preserving numerical schemes have been also studied
for the analogues of \eqref{intro_AllenCahn},
such as the fractional Allen--Cahn equation using the Crank--Nicolson time-stepping \cite{HoTaYa17},
the nonlocal Allen--Cahn equation by using  first- and second-order ETD schemes \cite{DuJuLiQi19},
and the complex-valued Ginzburg--Landau model of superconductivity \cite{DuGuPe92}
by considering the finite volume method \cite{Du98}
and finite element method with the mass-lumping technique \cite{Du05} in space
with backward Euler time-stepping.

In this paper, we present a unified framework on the MBPs for
a class of semilinear parabolic equations \eqref{intro_model} with general boundary conditions and their spatially discretized systems,
as well as the time-discrete analogues of numerical approximations
based on the exponential time differencing method.
The ETD method \cite{BeKeVo98,CoMa02,HoOs05} comes from the variation-of-constants formula
with the nonlinear terms approximated by polynomial interpolations,
followed by exact integration of the resulting integrals.
We mainly address the following question:
\emph{Under what conditions does the equation \eqref{intro_model} have the MBP
and do numerical approximations of \eqref{intro_model} preserve the MBP?}
We provide an abstract framework on mathematical and numerical analysis for the MBP of \eqref{intro_model},
where the ETD method is used for the temporal discretization.
Our main contribution includes several aspects.
First, we systematically formulate an abstract mathematical framework
to illustrate the essential characteristics of the linear and nonlinear operators in \eqref{intro_model}
so that the model equation satisfies the MBP
and the corresponding first- and second-order ETD schemes preserve the discrete MBP unconditionally.
Second, we are able to present  results valid for a large class of
semilinear parabolic problems subject to different boundary conditions and constraints
which significantly generalize those obtained in \cite{DuJuLiQi19}
(which is a specialized study focused  only on the nonlocal Allen--Cahn equation with periodic boundary condition).
We demonstrate that the theory also works for problems subject to
either Dirichlet boundary condition or homogeneous Neumann boundary condition
in the classic (local continuum) and nonlocal sense.
These generalizations offer our theory a much wider range of applicability.
Third, in order to establish a broad and abstract framework presented here, new analysis techniques are developed.
They differ from the ones used in earlier works.
Indeed, the abstraction allows us to discover the essential ingredients of the MBPs, which were absent from earlier analysis.
In addition,  we also derive error estimates for the approximate solutions of MBP-preserving ETD schemes,
as well as their energy stability when applied to gradient flow  models.

The necessity to take nonhomogeneous Dirichlet boundary condition into account
comes from various practical considerations.
For instance, phase separations occurring in hydrocarbon systems
could be described by a diffuse-interface model with the Peng--Robinson equation of state \cite{QiSu14},
where a nonhomogeneous Dirichlet boundary condition is usually needed
to keep the microstructures with certain phases on the boundary.
In addition, nonhomogeneous Dirichlet boundary conditions are also necessary
for many scalable and multiscale algorithms based on domain and subspace decompositions.

One of the distinctive features of the ETD schemes is the exact evaluation of the contribution of the linear operator,
which provides good stability and accuracy even though the linear part has strong stiffness.
Besides, the ETD schemes usually perform as efficient as an explicit scheme
since the operator exponentials could be often implemented by some fast algorithms in regular domains.
Such advantages lead to successful applications of ETD schemes on a large class of phase-field models
which usually yield highly stiff ODE systems under spatial discretizations.
We refer the readers to the literature, e.g., \cite{DuJuLiQi19,JuLiQiZh18,JuZhDu15,JuZhZhDu15,ZhZhWaJuDu16,ZhJuZh16}.

The rest of this paper is organized as follows.
In Section \ref{sect_model_MBP}, we recall the semilinear equation  \eqref{intro_model}
in a Banach space consisting of real scalar-valued continuous functions,
declare the basic assumptions on the linear and nonlinear operators,
and show that the model equation has a unique solution
and satisfies the MBP under these assumptions.
We also present a variety of concrete examples of the linear and nonlinear operators in the space-continuous and
space-discrete settings
covered by the theoretical framework.
In Section \ref{sect_ETD_MBP}, first- and second-order ETD time-discrete schemes are constructed
and proved to preserve the discrete MBPs along with their convergence analysis.
In addition, energy stability is also analyzed when the framework is applied to gradient flow  models.
In Section \ref{sect_extension},
we discuss the extensions to vector-valued and matrix-valued problems in the space-continuous setting
with the complex-valued one being a special case.
In Section \ref{sect_numerical},
practical implementations of the ETD schemes are discussed
and some numerical experiments are also performed to verify the theoretical results.
Finally, concluding remarks are given in Section \ref{sect_conclusion}.

\section{Maximum bound principle of the model equation}
\label{sect_model_MBP}

\subsection{Abstract framework}
\label{subsect_framework}

Let us assume $\Omega$ is either a connected spatial region
or a collection of isolated points in $\R^d$.
More precisely, we consider  the following two situations:
\begin{itemize}
\item[(D1)] $\Omega$ is an open, connected and bounded set with a Lipschitz boundary denoted by $\partial\Omega$,
and $\Omega_c$ is a closed connected set disjoint with $\Omega$ but $\partial\Omega\subset\Omega_c$;
denote $\overline{\Omega}=\Omega\cup\partial\Omega$ and $\widehat{\Omega}=\Omega\cup\Omega_c$;
\item[(D2)] {for a given pair of sets $\widetilde{\Omega}$ and $\widetilde{\Omega}_c$ that are described in (D1),
and with $\Sigma$ consisting of all the nodes in a mesh partitioning $\widehat{\widetilde{\Omega}}$,
we let $\Omega=\widetilde{\Omega}\cap\Sigma$, $\partial\Omega=\partial\widetilde{\Omega}\cap\Sigma$,
$\Omega_c=\widetilde{\Omega}_c\cap\Sigma$, $\overline{\Omega}=\Omega\cup\partial\Omega$,
and $\widehat{\Omega}=\Omega\cup\Omega_c$.}
\end{itemize}
Note that in Case (D1), we have $\Omega_c=\partial\Omega$ for classic differential operators
and $\Omega_c$ is usually a nonempty volume for nonlocal integral operators.
When $\Omega_c=\partial\Omega$, we define $\Omega^*=\Omega$ and $\Omega_c^*=\partial\Omega$;
otherwise, we define $\Omega^*=\overline{\Omega}$ and $\Omega_c^*=\Omega_c\setminus\partial\Omega$.
For a problem with periodic boundary condition,  we assume that its period cell is a rectangle in $\R^d$, i.e.,
$\Omega=\prod_{i=1}^d(a_i,b_i)$, and we use a special notation $\overline\Omega_+=\prod_{i=1}^d(a_i,b_i]$.
Case (D2) corresponds to a discrete version of (D1).
This allows us to unify the discussions presented later with the same set of notations
for both space-continuous and space-discrete cases.
For any set $D\subset\R^d$,
we denote by $C(D)$ the space consisting of real scalar-valued continuous functions defined on $D$
and by $C_b(D)$ the set of all bounded functions in $C(D)$.
Here, the continuity of functions is defined as follows \cite{Rudin76}:
\begin{equation}
\label{def_contfun}
\text{$w$ is continuous at $\bx^*\in D$}\iff
\text{$\forall\,\bx_k\to \bx^*$ in $D$ implies $w(\bx_k)\to w(\bx^*)$}.
\end{equation}
If $D$ is bounded and closed, then obviously $C(D)=C_b(D)$.

Let $\hX=C(\overline{\Omega})$ if $\Omega_c=\partial\Omega$;
otherwise $\hX=\{w:\widehat\Omega\to\R\,|\,w|_{\overline{\Omega}}\in C(\overline{\Omega})
\text{ and } w|_{\Omega_c^*}\in C_b(\Omega_c^*)\}$
and we write $\hX=C(\overline{\Omega})\cap C_b(\Omega_c^*)$ below for simplicity.
Define the supremum norm on $\hX$ by
$$\|w\|=\sup_{\bx\in\widehat{\Omega}}|w(\bx)|,\quad \forall\, w\in \hX,$$
then $(\hX,\|\cdot\|)$ becomes a Banach space.
According to the definition \eqref{def_contfun},
$\hX$ is well-defined under Cases (D1) and (D2).
Let $f:C_b(\Omega^*)\to C_b(\Omega^*)$ be a nonlinear operator
and $\hL:D(\hL)\to C_b(\Omega^*)$ be a linear operator with the domain $D(\hL)\subset\hX$.
Then we consider the following two cases:
\begin{itemize}
\item[(C1)] $X=\{w|_{\Omega^*}\,|\,w\in \whX\}$ and $\partial X=C_b(\Omega_c^*)$, where $\whX =\{w\in\hX\,|\,w|_{\Omega_c^*}=0\}$;
\item[(C2)] $X=\{w|_{\overline{\Omega}_+}\,|\,w\in \whX\}$, where $\whX=\Cp{(\overline\Omega)}:= \{w\in C(\R^d)$ is periodic
with respect to the rectangle $\Omega\}$.
\end{itemize}
 It is  easy to see that
$X$ defined in any of  Cases (C1) and (C2) can be regarded as a linear subspace of $\hX$
in the sense of {\em isometric isomorphism} ($X\simeq \whX$)
through the zero or periodic extension to $\Omega_c^*$ or $\R^d$ correspondingly.
We always omit the extension mapping between $X$ and $\whX$ for simplicity when there is no ambiguity.
We also remark that $X$ equipped with the supremum norm $\|\cdot\|$ is a Banach space in both Cases (C1) and (C2),
so is $\partial X$ in Case (C1).

For Case (D1),  define $D(\hL_0)=\{w\in D(\hL)\cap X \,|\, \hL w\in X \}$ which is a subspace of $X$;
for Case (D2), define $D(\hL_0)=X$.
Then we define the linear operator $\hL_0=\hL|_{D(\hL_0)}:D(\hL_0)\to X$.
Note that for any $w\in D(\hL_0)$, it holds $\hL_0 w=\hL w$ in $\Omega^*$,
and $\hL_0 w$ is also well-defined in $\Omega^*_c$ in the sense of isomorphism.

The model problem we consider in this paper is a class of semilinear parabolic equations taking the following form
\begin{equation}
\label{model_eq}
u_t=\hL u+f[u],\quad t>0,\ \bx\in\Omega^*,
\end{equation}
where $u:[0,\infty)\times\widehat{\Omega}\to\R$ is the unknown function subject to the initial  condition
\begin{equation}
\label{model_eq_init}
u(0,\bx)=u_0(\bx),\quad \bx\in\widehat{\Omega}
\end{equation}
with $u_0\in\hX$
and either the periodic boundary condition for Case (C2)  or
the Dirichlet boundary condition
\begin{equation}
\label{model_boundary}
u(t,\bx) = g(t,\bx), \quad t \ge 0,\ \bx\in\Omega_c^*
\end{equation}
for Case (C1) with $g\in C([0,\infty);\partial X)$.
The compatibility condition  is also assumed to  hold, that is, $g(0,\cdot)=u_0$  on $\Omega^*_c$ for Case (C1)
and $u_0$ is $\Omega$-periodic for Case (C2).

In order to establish the maximum bound principle (MBP) for the model problem \eqref{model_eq},
as well as its time discretizations proposed later,
we make the following specific assumptions regarding the operators $\hL$, $\hL_0$ and $f$ defined above.

\begin{assumption}[Requirements on $\hL$ and $\hL_0$]
\label{assump_L}
The linear operators $\hL$ and $\hL_0$ satisfy the following conditions:

{\rm(a)} for any $w\in D(\hL)$ and $\bx_0\in\Omega^*$, if
\begin{equation}
\label{L_cond_c}
w(\bx_0)=\sup_{\bx\in\widehat{\Omega}}w(\bx),
\end{equation}
then $\hL w(\bx_0)\le0$;

{\rm(b)} the domain $D(\hL_0)$ is dense in $X$;

{\rm(c)} there exists $\lambda_0>0$ such that the operator $\lambda_0\hI-\hL_0:D(\hL_0)\to X$ is surjective,
where $\hI$ is the identity operator.
\end{assumption}

Note that it can be derived from Assumption \ref{assump_L}-(a) that
$\hL$ maps any constant function in $D(\hL)$ to the zero element in $\hX$.

\begin{assumption}[Requirements on $f$]
\label{assump_f}
The nonlinear operator $f$ acts as a composite function
induced by a given one-variable continuously differentiable function $f_0:\R\to\R$, that is,
\begin{equation}
\label{assump_f_composite}
f[w](\bx)=f_0(w(\bx)),\quad\forall\,w\in C_b(\Omega^*),\ \forall\,\bx\in{\Omega^*},
\end{equation}
and there exists a constant $\beta>0$ such that
\begin{equation}
\label{lincon}
f_0(\beta)\le0\le f_0(-\beta).
\end{equation}
\end{assumption}

\begin{remark}
\label{rem1}
A more general case is that
the function $f_0$ satisfies $f_0(M)\le0\le f_0(m)$ for some $M>m$
instead of \eqref{lincon} in Assumption \ref{assump_f}.
In this case, one can carry out an affine transform to the unknown function $u$.
More precisely,
take the affine transform $\eta:\R\to\R$ defined by
$$\eta(\theta)=\frac{M-m}{2\beta}\theta+\frac{M+m}{2},\quad\theta\in\R,$$
and define $\tilde{f}_0(\theta)=\frac{2\beta}{M-m}f_0(\eta(\theta))$ for any $\theta\in\R$,
then it holds $\tilde{f}_0(\beta)\le0\le\tilde{f}_0(-\beta)$.
By letting $\tilde{u}=\eta^{-1}(u)$,
we can obtain from \eqref{model_eq} that
\begin{equation*}
\tilde{u}_t=\hL\tilde{u}+\tilde{f}[\tilde{u}],\quad t>0,\ \bx\in\Omega^*,
\end{equation*}
where the nonlinear mapping $\tilde{f}$ is determined by the composite function
$$\tilde{f}[w](\bx)=\tilde{f}_0(w(\bx)),\quad\forall\,w\in C_b(\Omega^*),\ \forall\,\bx\in{\Omega^*},$$
and satisfies Assumption \ref{assump_f}.
\end{remark}

\begin{lemma}
\label{lem_L_semigroup}
Under Assumption \ref{assump_L},  the following  properties on  $\hL_0$ hold:

{\rm(i)}  $\hL_0$ is dissipative, i.e.,
for any $\lambda>0$ and any $w\in D(\hL_0)$,
\begin{equation*}
\|(\lambda\hI-\hL_0)w\|\ge\lambda\|w\|;
\end{equation*}

{\rm(ii)} $\hL_0$ is the generator of a contraction semigroup $\{S_{\hL_0}(t)\}_{t\ge0}$ on $X$,
i.e., $\opnorm{S_{\hL_0}(t)}\le1$, where $\opnorm{\cdot}$ denotes the operator norm  defined by
\begin{equation*}
\opnorm{\hT} = \sup_{\substack{w\in X, \|w\|=1}}\|\hT w\|.
\end{equation*}
\end{lemma}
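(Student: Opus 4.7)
The plan is to establish part (i) by a direct pointwise maximum argument based on Assumption \ref{assump_L}-(a), and then to deduce part (ii) from the Lumer--Phillips theorem using parts (i), \ref{assump_L}-(b), and \ref{assump_L}-(c) together.

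For the dissipativity in part (i), I would fix $\lambda>0$ and $w\in D(\hL_0)$ (assuming $\|w\|>0$, otherwise the inequality is trivial) and identify $w$ with its canonical extension in $\whX\subset\hX$: extension by zero on $\Omega_c^*$ in Case (C1), and periodic extension in Case (C2). The continuous function $|w|$ then attains its supremum somewhere on $\widehat\Omega$, and up to replacing $w$ by $-w$ we may assume $w(\bx_0)=\|w\|$ at some $\bx_0$. A key technical point is to argue that $\bx_0$ can be taken to lie in $\Omega^*$: in Case (C1) the vanishing of $w$ on $\Omega_c^*$ forces any nontrivial positive extremum to occur in $\Omega^*$, and in Case (C2) periodicity of the extension allows us to translate $\bx_0$ into the fundamental cell $\overline\Omega_+$. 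Once this is secured, Assumption \ref{assump_L}-(a) yields $\hL w(\bx_0)\le0$, whence
\begin{equation*}
\|(\lambda\hI-\hL_0)w\| \ge (\lambda w-\hL_0 w)(\bx_0) = \lambda\|w\|-\hL w(\bx_0) \ge \lambda\|w\|.
\end{equation*}

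For part (ii), I would invoke the Lumer--Phillips theorem, which states that a densely defined dissipative linear operator on a Banach space for which $\mathrm{Ran}(\lambda_0\hI-\hL_0)=X$ holds for some $\lambda_0>0$ generates a strongly continuous semigroup of contractions. All three hypotheses are available: dissipativity from part (i), density of $D(\hL_0)$ in $X$ from Assumption \ref{assump_L}-(b), and the range condition from Assumption \ref{assump_L}-(c). The contraction bound $\opnorm{S_{\hL_0}(t)}\le1$ is then part of the conclusion. The main anticipated obstacle is the identification argument in part (i): one must carefully verify, in each of Cases (C1) and (C2), that the extended $w$ realizes its signed extremum at some point of $\Omega^*$, since Assumption \ref{assump_L}-(a) is only available at such points; after that, the remainder of the argument is routine application of standard semigroup theory.
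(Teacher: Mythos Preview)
Your proposal is correct and follows essentially the same route as the paper: a pointwise extremum argument invoking Assumption~\ref{assump_L}-(a) for part (i), followed by the Lumer--Phillips theorem for part (ii). The only cosmetic difference is that in Case (C1) you exclude $\bx_0\in\Omega_c^*$ by noting $w$ vanishes there, whereas the paper treats that case directly by observing $\hL_0 w(\bx_0)=0$ (since $\hL_0 w\in X$); both handle the point equally well.
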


\begin{proof}
We first prove the result for Case (C1).
For any $w\in D(\hL_0)$, since
$w|_{\overline\Omega}\in C(\overline\Omega)$ and $w|_{\Omega_c^*} =0$,
it is clear that $|w(\bx)|$ reaches its maximum at  some point $\bx_0\in\widehat{\Omega}$,
namely, $|w(\bx_0)|=\|w\|$.
Without loss of generality, let us  assume $w(\bx_0)\ge 0$; otherwise, we consider $-w$ instead.
If $\bx_0\in\Omega^*$, we know that $\hL_0 w(\bx_0)=\hL w(\bx_0)\le0$  by Assumption \ref{assump_L}-(a).
If $\bx_0\in\Omega_c^*$, then $\hL_0 w(\bx_0)=0$ by the definition of $X$.
For any $\lambda>0$, we then have
$$\|(\lambda\hI-\hL_0)w\|\ge|\lambda w(\bx_0)-\hL_0 w(\bx_0)|
=\lambda w(\bx_0)-\hL_0 w(\bx_0)\ge\lambda w(\bx_0)=\lambda\|w\|,$$
which completes the proof of (i).
Then, according to (i), Assumptions \ref{assump_L}-(b) and \ref{assump_L}-(c),
the property (ii) follows from the Lumer--Phillips Theorem \cite[Theorem II.3.15]{EnNa00}.

Now we consider Case (C2) and $\bx_0\in\widehat{\Omega}$ such that $|w(\bx_0)|=\|w\|$.
If $\bx_0\in\Omega_c^*$, by the $\Omega$-periodicity, we can regard $\bx_0$ as a point in $\Omega^*$.
Then, the above analysis could be similarly done to obtain (i) and (ii).
\end{proof}

\begin{remark}
\label{proneu}
The proof of Lemma \ref{lem_L_semigroup}-(i) uses the
Assumption \ref{assump_L}-(a) (which is only related to $\hL$) and the definitions of $X$ and $D(\hL_0)$
to deduce the fact that $\hL_0 w(\bx_0)\leq 0$ if \eqref{L_cond_c} holds with $\bx_0\in\widehat\Omega$.
Lemma \ref{lem_L_semigroup}-(ii) is the consequence of Lemma \ref{lem_L_semigroup}-(i) and Assumption \ref{assump_L}-(b) and (c).
We note that Lemma \ref{lem_L_semigroup}-(ii) is the key result to be used in later discussions.
It can be established under different assumptions by using other approaches,
see discussions in Example \ref{eg_linear_fraclaplace}.
\end{remark}

Next, let us introduce a stabilizing constant $\kp>0$ and
rewrite the equation \eqref{model_eq} in the following equivalent form:
\begin{equation}
\label{model_ODE}
u_t + \kp u = \hL u + \hN[u], \quad t>0,\ \bx\in\Omega^*,
\end{equation}
where $\hN=\kp\hI+f$.
According to \eqref{assump_f_composite} in Assumption \ref{assump_f},
we know
\begin{equation*}
\hN[w](\bx)=N_0(w(\bx)),\quad\forall\,w\in C_b(\Omega^*),\ \forall\,\bx\in{\Omega^*},
\end{equation*}
where
\begin{equation}
\label{N0_definition}
N_0(\xi)=\kp\xi+f_0(\xi),\quad \xi\in\R.
\end{equation}
We impose a requirement on the selection of the stabilizing constant $\kp$ as
\begin{equation}
\label{kappacon}
\kp\ge\max_{|\xi|\le\beta}|f_0'(\xi)|.
\end{equation}
Note that \eqref{kappacon} always  can be reached
since $f_0$ is continuously differentiable.

\begin{lemma}
\label{lem_nonlinear}
Under Assumption \ref{assump_f} and the requirement \eqref{kappacon}, it holds that

{\rm(i)} $|N_0(\xi)|\le\kp\beta$ for any $\xi\in[-\beta,\beta]$;

{\rm(ii)} $|N_0(\xi_1)-N_0(\xi_2)|\le2\kp|\xi_1-\xi_2|$ for any $\xi_1,\xi_2\in[-\beta,\beta]$.
\end{lemma}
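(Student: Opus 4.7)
The plan is to exploit the explicit formula $N_0(\xi)=\kappa\xi+f_0(\xi)$ together with the defining requirement \eqref{kappacon} on $\kappa$. Since $f_0$ is continuously differentiable, so is $N_0$, and I would first compute
\begin{equation*}
N_0'(\xi)=\kappa+f_0'(\xi),\qquad \xi\in\R.
\end{equation*}
The inequality \eqref{kappacon} immediately yields the two-sided bound $0\le N_0'(\xi)\le 2\kappa$ on $[-\beta,\beta]$. The nonnegative side says $N_0$ is nondecreasing on this interval (which will drive (i)), and the upper side is already the Lipschitz constant needed for (ii).

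For (i), using monotonicity of $N_0$ on $[-\beta,\beta]$, for every $\xi\in[-\beta,\beta]$ we have $N_0(-\beta)\le N_0(\xi)\le N_0(\beta)$. Then I would apply \eqref{lincon} at the two endpoints:
\begin{equation*}
N_0(\beta)=\kappa\beta+f_0(\beta)\le\kappa\beta,\qquad
N_0(-\beta)=-\kappa\beta+f_0(-\beta)\ge-\kappa\beta.
\end{equation*}
Combining these chains gives $-\kappa\beta\le N_0(\xi)\le\kappa\beta$, i.e.\ $|N_0(\xi)|\le\kappa\beta$.

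For (ii), I would invoke the mean value theorem: for any $\xi_1,\xi_2\in[-\beta,\beta]$ there exists $c$ between $\xi_1$ and $\xi_2$, hence $c\in[-\beta,\beta]$, such that $N_0(\xi_1)-N_0(\xi_2)=N_0'(c)(\xi_1-\xi_2)$. The estimate $|N_0'(c)|\le\kappa+|f_0'(c)|\le 2\kappa$ from \eqref{kappacon} then finishes the proof.

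There is no genuine obstacle: both statements are routine consequences of the fact that the stabilization constant $\kappa$ has been chosen precisely so that $N_0$ is monotone nondecreasing and $2\kappa$-Lipschitz on $[-\beta,\beta]$. The only point worth care is that part (i) requires combining \emph{both} endpoint inequalities in \eqref{lincon} with the monotonicity of $N_0$ — neither inequality alone controls $|N_0|$ across the whole interval.
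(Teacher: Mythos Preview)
Your proof is correct and follows essentially the same route as the paper: compute $N_0'(\xi)=\kappa+f_0'(\xi)$, use \eqref{kappacon} to get $0\le N_0'\le 2\kappa$ on $[-\beta,\beta]$, deduce (ii) from the upper bound (via the mean value theorem) and (i) from the monotonicity together with the endpoint inequalities in \eqref{lincon}. The paper orders the two parts slightly differently but the argument is the same.
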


\begin{proof}
From \eqref{N0_definition}, we have $N_0'(\xi)=\kp+f_0'(\xi)$ and
$$0\le N_0'(\xi)\le2\kp,\quad\forall\,\xi\in[-\beta,\beta],$$
which leads to (ii) directly.
From Assumption \ref{assump_f}, for any $\xi\in[-\beta,\beta]$, we know that
$$-\kp\beta\le-\kp\beta+f_0(-\beta)=N_0(-\beta)\le N_0(\xi)\le N_0(\beta)=\kp\beta+f_0(\beta)\le\kp\beta,$$
which gives (i).
\end{proof}

Now we can show that the aforementioned  model  problem  admits a unique solution and possesses the MBP.
Our main theorem can be stated as follows.

\begin{theorem}
\label{thm_model_max}
Given any constant $T>0$.
Under Assumptions \ref{assump_L} and \ref{assump_f},
if
\begin{subequations}
\label{cond_initial_BC}
\begin{equation}
|u_0(\bx)| \le \beta, \quad \forall\,\bx\in\widehat{\Omega},
\end{equation}
then  the equation \eqref{model_eq} subject to the initial condition \eqref{model_eq_init} and  either the periodic boundary condition or the Dirichlet boundary condition \eqref{model_boundary} with
\begin{equation}
\label{cond_BC}
|g(t,\bx)| \le \beta, \quad \forall\,t\in[0,T],\ \forall\,\bx\in\Omega_c^*
\end{equation}
\end{subequations}
 has a unique solution $u\in C([0,T];\hX)$
and it satisfies $\|u(t)\|\le\beta$ for any $t\in[0,T]$.
\end{theorem}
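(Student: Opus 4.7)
The plan is to reformulate the problem as a Volterra-type integral equation via the contraction semigroup generated by $\hL_0$, and then to solve it by Banach's fixed point theorem on the set of functions pointwise bounded by $\beta$. First, I would rewrite \eqref{model_eq} in the stabilized form \eqref{model_ODE}, $u_t+\kp u=\hL u+\hN[u]$ with $\hN=\kp\hI+f$. Because $\hL_0$ generates a contraction semigroup $\{S_{\hL_0}(t)\}_{t\ge 0}$ on $X$ by Lemma \ref{lem_L_semigroup}, the shifted operator $-\kp\hI+\hL_0$ generates the contraction semigroup $\{e^{-\kp t}S_{\hL_0}(t)\}_{t\ge 0}$. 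The corresponding Duhamel (mild) formulation reads
\begin{equation*}
u(t)=e^{-\kp t}S_{\hL_0}(t)u_0+\int_0^t e^{-\kp(t-s)}S_{\hL_0}(t-s)\hN[u(s)]\,\d s
\end{equation*}
in Case (C2); in Case (C1) I would first lift the Dirichlet data by an extension $\tilde g(t,\cdot)\in\hX$ of $g(t,\cdot)$ with $|\tilde g|\le\beta$ (possible because $|g|\le\beta$), and work with $v=u-\tilde g\in X$ governed by an analogous mild equation with an appropriately modified source.

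I would then introduce the closed convex set
\begin{equation*}
B_\beta=\bigl\{u\in C([0,T'];\hX)\,:\,\|u(t)\|\le\beta,\ \forall\,t\in[0,T']\bigr\}
\end{equation*}
for a small $T'>0$, and define $\Phi$ to be the right-hand side of the Duhamel formula above. Invariance $\Phi(B_\beta)\subset B_\beta$ follows from $\opnorm{S_{\hL_0}(t-s)}\le 1$ together with Lemma \ref{lem_nonlinear}(i), which yields $\|\hN[u(s)]\|\le\kp\beta$ whenever $\|u(s)\|\le\beta$, so that
\begin{equation*}
\|\Phi u(t)\|\le e^{-\kp t}\beta+\kp\beta\int_0^t e^{-\kp(t-s)}\,\d s=\beta.
\end{equation*}
Contraction on $B_\beta$ follows from Lemma \ref{lem_nonlinear}(ii):
\begin{equation*}
\|\Phi u(t)-\Phi\tilde u(t)\|\le 2\kp\int_0^t e^{-\kp(t-s)}\,\d s\,\|u-\tilde u\|_\infty=2(1-e^{-\kp T'})\|u-\tilde u\|_\infty,
\end{equation*}
which is strictly contractive once $T'<(\ln 2)/\kp$. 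Banach's fixed point theorem then produces a unique mild solution on $[0,T']$ obeying $\|u(t)\|\le\beta$. Since $T'$ depends only on $\kp$, and since $\|u(T')\|\le\beta$ places us back in the same situation, I would iterate the local construction finitely many times to cover $[0,T]$, preserving the bound at each step; uniqueness propagates along the way.

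The main obstacle is the consistent handling of the nonhomogeneous Dirichlet data in Case (C1): since $S_{\hL_0}$ acts on $X$, whose elements vanish on $\Omega_c^*$, one must either derive a clean equation for $v=u-\tilde g\in X$ with the extra inhomogeneity $\hL\tilde g-\partial_t\tilde g-\kp\tilde g$ kept under control without imposing regularity on $g$ beyond $g\in C([0,T];\partial X)$, or interpret the mild formula directly in $\hX$ with a boundary-correction term. In either route one must ensure that the pointwise bound $|u(t,\bx)|\le\beta$ holds on all of $\widehat\Omega$—not merely on the part of the domain where the $X$-valued component lives—which is exactly where the combination of $|\tilde g|\le\beta$, Lemma \ref{lem_nonlinear}(i), and the contractivity of $e^{-\kp t}S_{\hL_0}(t)$ must be balanced carefully.
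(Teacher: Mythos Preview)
Your overall strategy and the contraction estimate coincide with the paper: both land on a fixed-point argument with contraction constant $2(1-e^{-\kp t_1})$, requiring $t_1<\kp^{-1}\ln 2$, and both use Lemma~\ref{lem_nonlinear}(ii) for that step. The genuine difference is in how invariance of $B_\beta$ is proved, and this is exactly what resolves the Case~(C1) obstacle you flagged but did not close.

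The paper does \emph{not} write the fixed-point map via the Duhamel formula. Instead, for a given $v\in C_g([0,t_1];\hX_\beta)$ it defines $\hA[v]=w$ as the solution of the linear PDE $w_t+\kp w=\hL w+\hN[v]$ on $\Omega^*$ with boundary data $w=g$ on $\Omega_c^*$ and initial data $w(0)=u_0$. Invariance ($\|w(t)\|\le\beta$) is then obtained by a direct maximum-principle argument: at an interior maximum $(t^*,\bx^*)$ one has $w_t\ge 0$ and, by Assumption~\ref{assump_L}(a), $\hL w\le 0$, hence $\kp w(t^*,\bx^*)\le N_0(v(t^*,\bx^*))\le\kp\beta$; on $\Omega_c^*$ the bound comes directly from $|g|\le\beta$. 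No lifting of $g$, no regularity of $g$ beyond continuity, and no semigroup bound are used here. The semigroup enters only in the contraction step, applied to the \emph{difference} $w-\tilde w$, which has zero boundary and initial data and therefore genuinely lives in $X$ where $S_{\hL_0}$ acts.

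Your approach---lifting $g$ to $\tilde g$ and writing a mild equation for $v=u-\tilde g$---introduces the source terms $\hL\tilde g-\tilde g_t-\kp\tilde g$, which require spatial and temporal regularity of the extension that is not available under the hypothesis $g\in C([0,T];\partial X)$; moreover the clean invariance estimate $e^{-\kp t}\beta+\kp\beta\int_0^t e^{-\kp(t-s)}\d s=\beta$ breaks down, since you would need a pointwise bound on $v+\tilde g$ rather than on $v$. The paper's device of using the PDE formulation for invariance and the semigroup only for contraction is precisely what buys you Case~(C1) without extra assumptions.
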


\begin{proof}
Let us consider  Case (C1) first.
Denote $\hX_\beta=\{w\in \hX\,|\,\|w\|\le\beta\}$
and $C_g([0,t];\hX_{\beta})=\{w\in C([0,t];\hX_{\beta})\,|\,w|_{[0,t]\times\Omega_c^*}=g\}$ for any $t\in(0,T]$.
For a fixed $t_1>0$ and a given $v\in C_g([0,t_1];\hX_{\beta})$,
let us define $w:[0,t_1]\to\hX$ to be the solution of the following linear problem:
\begin{equation}
\label{thm_mbp_pf1}
\begin{dcases}
w_t + \kp w = \hL w + \hN[v], & t\in(0,t_1], \ \bx\in\Omega^*,\\
w(t,\bx)= g(t,\bx), & t\in[0,t_1], \ \bx\in\Omega_c^*,\\
w(0,\bx) = u_0(\bx), & \bx\in\widehat{\Omega},
\end{dcases}
\end{equation}
where the constant $\kp$ satisfies \eqref{kappacon}.
It is easy to know that $w$ is uniquely defined on $[0,t_1]\times\widehat{\Omega}$
by noticing that setting $\hN[v]=0$, $g=0$, and $u_0=0$ in \eqref{thm_mbp_pf1}
leads to $w(t)\in D(\hL_0)$, and thus $w(t)=\e^{-\kp t}S_{\hL_0}(t)u_0 = 0$ for each $t\in[0,t_1]$.
Suppose there exists $(t^*,\bx^*)\in(0,t_1]\times\widehat{\Omega}$ such that
$w(t^*,\bx^*)=\max_{0\le t\le t_1}\|w(t)\|$.
By \eqref{cond_BC},
we only need to consider the case $\bx^*\in\Omega^*$.
According to Assumption \ref{assump_L}-(a),
we have $w_t\ge0$ and $\hL w\le 0$ at $(t^*,\bx^*)$,
which implies $\kp w(t^*,\bx^*)\le N_0(v(t^*,\bx^*))$.
Since $|v(t^*,\bx^*)|\le\beta$, according to Lemma \ref{lem_nonlinear}-(i),
we obtain $w(t^*,\bx^*)\le\beta$.
Similarly, if there exists $(t^{**},\bx^{**})\in(0,t_1]\times\widehat{\Omega}$
such that
$w(t^{**},\bx^{**})=-\max_{0\le t\le t_1}\|w(t)\|$,
one can show $w(t^{**},\bx^{**})\ge-\beta$.
Thus we obtain $\|w(t)\|\le\beta$ for any $t\in[0,t_1]$,
which means $w\in C_g([0,t_1];\hX_\beta)$.

Next we define a mapping $\hA:C_g([0,t_1];\hX_\beta)\to C_g([0,t_1];\hX_\beta)$
by $\hA[v]=w$ through \eqref{thm_mbp_pf1}.
For $v,\tilde{v}\in C_g([0,t_1];\hX_\beta)$, we see that $w=\hA[v]$ and $\tilde{w}=\hA[\tilde{v}]$ satisfy
\begin{equation*}
w(t)-\tilde{w}(t)
=\int_0^t\e^{-\kp(t-s)}S_{\hL_0}(t-s)\big\{\hN[v(s)]-\hN[\tilde{v}(s)]\big\}\,\d s, \quad t\in[0,t_1].
\end{equation*}
Using Lemma \ref{lem_nonlinear}-(ii),
we can estimate the difference between $w(t)$ and $\tilde{w}(t)$ by
\begin{align*}
\|w(t)-\tilde{w}(t)\|
& \le\int_0^t\e^{-\kp(t-s)}\opnorm{S_{\hL_0}(t-s)}\|\hN[v(s)]-\hN[\tilde{v}(s)]\|\,\d s\\
& \le2\kp\int_0^t\e^{-\kp(t-s)}\|v(s)-\tilde{v}(s)\|\,\d s\\
& \le2(1-\e^{-\kp t_1})\|v-\tilde{v}\|_{C([0,t_1];\hX)}, \quad t\in[0,t_1],
\end{align*}
then,
$$\|\hA[v]-\hA[\tilde{v}]\|_{C([0,t_1];\hX)}
=\|w-\tilde{w}\|_{C([0,t_1];\hX)}
\le2(1-\e^{-\kp t_1})\|v-\tilde{v}\|_{C([0,t_1];\hX)}.$$
When $t_1<\kp^{-1}\ln2$, we see that $\hA$ is a contraction.
Since $\hX_\beta$ is closed in $\hX$,
we know that $C_g([0,t_1];\hX_\beta)$ is complete
with respect to the metric reduced by the norm $\|\cdot\|_{C([0,t_1];\hX)}$.
Then we can apply Banach's fixed-point theorem to get the existence of a unique solution $u\in  C_g([0,t_1];\hX_\beta)$ to the model equation \eqref{model_eq} on $[0,t_1]$.
Using standard continuation argument,
we then have the global existence of the unique solution $u\in C_g([0,T];\hX_\beta)$ for any $T>0$.

For Case (C2), the system \eqref{thm_mbp_pf1} still holds after removing the second equation in it.
For the case $\bx^*\in\Omega_c^*$, by the periodicity,
we could regard $\bx^*$ as a point in $\Omega^*$.
Then,  the analysis above is still valid.
Putting all the different cases together, we get a complete proof.
\end{proof}

\begin{remark}
We note that Theorem \ref{thm_model_max} also could be proved using other classical methods,
such as the method of upper and lower solutions \cite{Pao92} in the case of scalar second-order elliptic operator.
We emphasize that, in the proof given here, the MBP of the model equation \eqref{model_eq}
has no explicit dependence on the constant $\kp$,
even though we have assumed that $\kp$ satisfies \eqref{kappacon} in order to
use the Lemma \ref{lem_nonlinear}. Meanwhile, we will see later that the choice of the constant $\kp$
plays an important role  in designing MBP-preserving ETD schemes.
\end{remark}

In the following subsections,
we present some examples of the nonlinear and linear operators
which are applicable to the mathematical framework established above.

\subsection{Examples of the nonlinear function $f_0$}
\label{nonope}

One usually chooses $f_0$ as $f_0=-F'$ with $F$ being a primitive function,
when considering the phase-field models, or says, the gradient flows of some free energy functionals.

\begin{example}
\label{eg_nonlinear_logistic}
\rm Consider the function
\begin{equation}
\label{eg_f_logistic}
f_0(s)=\lambda s(1-s^p),
\end{equation}
where $\lambda>0$ and $p\in\N_+$.
According to Remark \ref{rem1},
$f_0$ satisfies that $f_0(M)\le0\le f_0(m)$ with any $M\ge1$ and $1\geq m\geq 0$.
Especially, for an even integer $p$, one can  choose $\beta\ge1$ such that
$f_0$ satisfies Assumption \ref{assump_f},
then the requirement \eqref{kappacon} becomes $\kp\ge\lambda[(p+1)\beta^p-1]$.
The special case $p=1$ gives the well-known logistic function
and the case $p=2$ with $\lambda=1$ gives
\begin{equation}
\label{eg_f_quartic}
f_0(s)=s-s^3,
\end{equation}
the derivative of $-F$ with $F(s)=\frac{1}{4}(s^2-1)^2$ (the quartic double-well potential).
\end{example}

\begin{example}
\label{eg_nonlinear_flory}
\rm Consider the Flory--Huggins free energy
\[
F(s)=\frac{\theta}{2}[(1+s)\ln(1+s)+(1-s)\ln(1-s)]-\frac{\theta_c}{2}s^2,
\]
where $\theta$ and $\theta_c$ are two constants satisfying $0<\theta<\theta_c$, and
\begin{equation}
\label{eg_f_flory}
f_0(s)=-F'(s)=\frac{\theta}{2}\ln\frac{1-s}{1+s}+\theta_cs.
\end{equation}
Denoting by $\rho$ the positive root of $f_0(\rho)=0$, i.e.,
\begin{equation}
\label{eg_f_flory_gamma}
\frac{1}{2\rho}\ln\frac{1+\rho}{1-\rho}=\frac{\theta_c}{\theta},
\end{equation}
we can find that $\rho\in\Big(\sqrt{1-\frac{\theta}{2\theta_c-\theta}},1\Big)$.
Here, $f_0$ satisfies the condition in Assumption \ref{assump_f} with $\beta\in[\rho,1)$,
and then the requirement \eqref{kappacon} becomes $\kp\ge\frac{\theta}{1-\beta^2}-\theta_c$.
\end{example}

\begin{example}
\label{pr}
\rm The Peng--Robinson equation of state \cite{PeRo76}
is widely used in the oil industries and petroleum engineering.
The Helmholtz free-energy density considered in such a model can be expressed as
\[
F(s) = RTs(\ln s-1) - RTs\ln(1-bs) + \frac{as}{2\sqrt{2}b}\ln\frac{1+(1-\sqrt{2})bs}{1+(1+\sqrt{2})bs},
\]
where $R$ is the universal gas constant, $T$ the temperature,
$a=a(T)$ the energy parameter, and $b$ the covolume parameter.
The values of these parameters could be found in \cite{QiSu14}.
Then
\begin{equation}
\label{pr_f0}
f_0(s) = - RT\ln\frac{s}{1-bs} - \frac{RTbs}{1-bs}
- \frac{a}{2\sqrt{2}b} \ln\frac{1+(1-\sqrt{2})bs}{1+(1+\sqrt{2})bs}
+ \frac{as}{1+2bs-b^2s^2},
\end{equation}
which has two zero points $m$ and $M$ satisfying $0<m<M<1/b$.
This example falls into the situation discussed in Remark \ref{rem1}.
\end{example}

\subsection{Examples of the linear operator $\hL$}
\label{linope}

\subsubsection{Infinite dimensional examples}

Here  we present some examples of the linear operator $\hL$ in the form of classic differential
or nonlocal integral  operators corresponding to Case (D1).
We will verify that $\hL$ and the {induced operator $\hL_0$} satisfy Assumption \ref{assump_L}.

\begin{example}[{\em Second-order elliptic differential operator}]
\label{eg_linear_elliptic}
\rm Consider the linear operator
\begin{equation}
\label{eg_L_elliptic}
\hL w(\bx)=A(\bx):\nabla^2w(\bx)+\bm{q}(\bx)\cdot\nabla w(\bx),\quad\bx\in\Omega,
\end{equation}
where $\bm{q}\in C(\overline{\Omega};\R^d)$ and
$A\in C(\overline{\Omega};\R^{d\times d})$ is symmetric and positive definite uniformly (i.e.,
there exists $\theta>0$ such that $\bxi^TA(\bx)\bxi\ge\theta\bxi^T\bxi$ for all $\bx\in\overline{\Omega}$ and $\bxi\in\R^d$).
In this case,  $\Omega^*=\Omega$,  $\Omega_c^*=\partial\Omega$,  $\widehat\Omega=\overline\Omega$, the boundary condition \eqref{model_boundary} is the classic Dirichlet one,
$\hX=C(\overline\Omega)$, and  $D(\hL)=C^2(\overline{\Omega})$.
For any $w\in D(\hL)$ and $\bx_0\in\Omega$ such that \eqref{L_cond_c} holds,
we always have $\nabla w(\bx_0)=\bm{0}$ and $\nabla^2w(\bx_0)$ is negative semi-definite.
Since $A(\bx_0)$ is symmetric, there exists an orthogonal matrix $O\in\R^{d\times d}$ such that
$$OA(\bx_0)O^T=\diag\{\lambda_1,\lambda_2,\dots,\lambda_d\}=:\Lambda$$
with $\lambda_i\ge\theta$ for all $i=1,2,\dots,d$.
Thus, we have
$$A(\bx_0):\nabla^2w(\bx_0)=(O^T\Lambda O):\nabla^2w(\bx_0)=\Lambda:(O^T\nabla^2w(\bx_0)O).$$
Since $O^T\nabla^2w(\bx_0)O$ is symmetric and negative semi-definite,
its diagonal entries are all non-positive.
Thus, we obtain $A(\bx_0):\nabla^2w(\bx_0)\le0$ and then $\hL w(\bx_0)\le0$,
which gives Assumption \ref{assump_L}-(a).
Assumption \ref{assump_L}-(b) is guaranteed by the fact that
$X=C_0(\overline{\Omega})$ and
$D(\hL_0)=\{w\in C^2(\overline{\Omega})\cap C_0(\overline{\Omega})\,|\,(\hL w)|_{\partial\Omega}=0\}$ for Case (C1),
and $X=\Cp(\overline{\Omega})$ and
$D(\hL_0)=C^2(\overline{\Omega})\cap\Cp(\overline{\Omega})$ for Case (C2) in the sense of isomorphism.
Assumption \ref{assump_L}-(c) can be obtained
from the standard analysis of the existence and regularity of the solution
to the partial differential equation $\lambda w-\hL_0 w=f$ in $\Omega$ for any $\lambda>0$ and $f\in X$.
\end{example}

\begin{remark}
The second-order elliptic differential operator could be also defined in the divergence form:
\begin{equation}
\label{eg_L_elliptic_div}
\widetilde{\hL}w(\bx)=\nabla\cdot(A(\bx)\nabla w(\bx))+\widetilde{\bm{q}}(\bx)\cdot\nabla w(\bx),\quad \bx\in\Omega
\end{equation}
with $A\in C(\overline{\Omega};\R^{d\times d})\cap C^1(\Omega;\R^{d\times d})$
and $\widetilde{\bm{q}}\in C(\overline{\Omega};\R^d)$.
This form could be written in a non-divergence form \eqref{eg_L_elliptic}
by setting $\bm{q}(\bx)=\nabla\cdot A(\bx)+\widetilde{\bm{q}}(\bx)$.
Similarly we can show the operator $\widetilde{\hL}$ and  $\widetilde{\hL}_0$ also satisfies Assumption \ref{assump_L}.
\end{remark}

\begin{example}[{\em Nonlocal diffusion operator}]
\label{eg_linear_nonlocal}
\rm Consider the integral operator \cite{DuGuLeZh12}
\begin{equation}
\label{eg_L_nonlocal}
\hL w(\bx)=\int_{\widehat{\Omega}}\gamma(\bx,\by)(w(\by)-w(\bx))\,\d \by,\quad \bx\in\overline\Omega,
\end{equation}
where $\gamma:\widehat{\Omega}\times\widehat{\Omega}\to\R$ is a symmetric nonnegative kernel function,
i.e., $\gamma(\bx,\by)=\gamma(\by,\bx)\ge0$.
We consider the widely studied case of the nonlocal operator \eqref{eg_L_nonlocal},  in which
the kernel is radial and parameterized by a horizon parameter $\delta>0$ much less than the size of $\Omega$,
i.e., $\gamma(\bx,\by)=\gamma_{\delta}(|\bx-\by|)$ for some nonnegative function
$\gamma_{\delta}:\R\to\R$ with $\gamma_{\delta}|_{\R\setminus(0,\delta]}=0$
and $\gamma_{\delta}|_{(0,\delta/2]}\ge\gamma^*$ for some constant $\gamma^*>0$.
In this case, $\Omega^*=\overline\Omega$,
$\Omega_c^*=\Omega_\delta := \{\by\in\R^d\setminus\overline\Omega\,|\,\exists\,\bx\in\overline\Omega \text{ such that }|\bx-\by|\leq \delta\}$,
 $\widehat\Omega=\overline\Omega\cup\Omega_{{\delta}}$,
 the boundary condition \eqref{model_boundary}  becomes a volume constraint, and $\hX=C(\overline{\Omega})\cap C_b(\Omega_{\delta})$.
Then, the operator \eqref{eg_L_nonlocal} can be re-expressed as
\begin{eqnarray}
\label{eg_L_nonlocal_var}
\hL w(\bx)&=&\int_{B_{\delta}(\bm{\bx})}\gamma_{\delta}(|\bx-\by|)\big(w(\by)-w(\bx)\big)\,\d \by\nonumber\\
&=&\frac{1}{2}\int_{B_{\delta}(\bm{0})}\gamma_{\delta}(|\by|)\big(w(\bx+\by)+w(\bx-\by)-2w(\bx)\big)\,\d \by,
\quad \bx\in\overline\Omega,
\end{eqnarray}
For suitable $\gamma_{\delta}$ subject to the finite second-order moment condition:
\begin{equation*}
\int_{B_{\delta}(\bm{0})}|\by|^2\gamma_{\delta}(|\by|)\,\d \by=2d,
\end{equation*}
the operator $\hL$ is consistent with the standard Laplacian $\Delta$ as $\delta\to0$
(see, e.g., \cite{Du19,DuGuLeZh12}). Assumption \ref{assump_L}-(a) results from the nonnegativity of the kernel.
Whether Assumptions \ref{assump_L}-(b) and \ref{assump_L}-(c) are satisfied depends on the choice of the kernel.
Let us consider the situation of integrable kernels only (i.e., $\gamma_{\delta}(|\by|)\in L^1(\R^d)$),
for which $D(\hL)=\hX$ and $\hL w(\bx)=(\gamma_\delta *w)(\bx) -\alpha_\delta w(\bx)$ for any $ \bx\in\overline\Omega$. Here,   $\gamma_\delta*w$ denotes the convolution
and  $\alpha_\delta = \| \gamma_{\delta}(|\cdot|)\|_{L^1(\R^d)}$.
For Case (C1), we have
$X=C(\overline{\Omega})$ and $D(\hL_0)=C(\overline{\Omega})$ in the sense of isomorphism using zero extension to $\Omega_{\delta}$
thus Assumption \ref{assump_L}-(b) follows automatically.
For any $\lambda>0$ and $f\in C(\overline{\Omega})$,
there exists a unique solution $w\in L^{\infty}({\widehat\Omega})$ to the equation $\lambda w - \hL_0 w =f$ in $\overline\Omega$,
and it  further can be shown  $w\in C({\overline\Omega})$ \cite{Chasseigne07,DuYi19} by the property that the convolution of $L^{\infty}$ and $L^{1}$ functions is uniformly continuous,
which verifies Assumption \ref{assump_L}-(c).
For Case (C2), we have  $X=D(\hL_0)=\Cp(\overline{\Omega})$ and all the assumptions are still satisfied.
Regarding non-integrable kernels with strong singularity at the origin like the fractional power,
we refer to discussion in the next example.
\end{example}

\begin{example}[{\em Fractional Laplace operator}]
\label{eg_linear_fraclaplace}
\rm  For a fixed $s\in(0,1)$,
the fractional Laplace operator $(-\Delta)^s$ is the pseudo-differential operator with symbol $|\bxi|^{2s}$, that is,
\begin{equation*}
\mathcal{F}[(-\Delta)^sw](\bxi)=|\bxi|^{2s}\mathcal{F}[w](\bxi), \quad \bxi\in\R^d,
\end{equation*}
where $w\in\mathscr{S}(\R^d)$, the class of Schwarz functions,
and $\mathcal{F}$ denotes the Fourier transform.
Denoting $\hL=-(-\Delta)^s$,
an equivalent form of $\hL$ on a bounded spatial domain $\overline\Omega\subset \R^d$ is given by \cite{SaKiMa93}
\begin{equation}
\label{eg_L_fracLaplace}
\hL w(\bx)={c_{d,s}}\int_{\R^d}\frac{w(\by)-w(\bx)}{|\by|^{d+2s}}\,\d \by, \quad
\bx\in \overline\Omega
\end{equation}
with $c_{d,s}=\frac{2^{2s}\Gamma(d/2+s)}{ \pi^{d/2}\Gamma(-s)}$.   In this case,
$\Omega^*=\overline\Omega$, $\Omega_c^*=\R^d\setminus\overline\Omega$, $\widehat\Omega=\R^d$ and $\hX=C(\overline{\Omega})\cap C_b(\R^d\setminus\overline\Omega)$.
As shown in \cite{TiDuGu16}, the operator $\hL$ could be regarded as the limit, when $\delta\to\infty$,
of the nonlocal operator $\hL$ defined by \eqref{eg_L_nonlocal_var} equipped with {a special rescaled and
truncated fractional} kernel $\gamma_\delta(r)=c_{d,s}r^{-d-2s}$ for $r\in(0,\delta]$.
Let us consider Case (C1) only, for which $X=C(\overline{\Omega})$ in the sense of isomorphism using zero extension to $\R^d\setminus\overline\Omega$. Define $\hL_0$ as the restriction of $\hL$ on $X$.
Due to the lack of full elliptic regularity, Lemma \ref{lem_L_semigroup} is not readily applicable.
However,
we can still get Lemma \ref{lem_L_semigroup}-(ii) using the Beurling--Deny criteria \cite{EO92}.
Indeed, as proved in \cite{FeRo16} if the  boundary $\partial \Omega$ is $C^{1,1}$ and still valid if   $\partial \Omega$ is
Lipschitz, the equation $u_t-\hL u=0 $ in $(0,\infty)\times\overline\Omega$
subject to $u(t,\cdot)|_{\R^d\setminus\overline\Omega}=0$ with the initial data $u_0\in X$
has a unique weak solution $u(t,\cdot) \in C(\R^d)\cap H^s(\R^d)$ for all $t>0$,
i.e., $u(t,\cdot)|_{\R^d\setminus\overline\Omega}=0$ and
\[
\int_{\Omega} u_t(t,\bx) v(t,\bx) \,\d \bx+  \frac{c_{d,s}}{2} \int_{\R^d}\int_{\R^d}\frac{(u(t,\bx)-u(t,\by))(v(t,\bx)-v(t,\by))}{|\bx-\by|^{d+2s}}\,\d \bx\d\by
= 0,
\]
for any $v(t,\cdot)\in H^s(\R^d)$ with $v(t,\cdot)|_{{\R^d\setminus\overline\Omega}}=0$.
To obtain Lemma \ref{lem_L_semigroup}-(ii), i.e., the solution $u(t)=S_{\hL_0}(t) u_0$
induces a contraction semigroup $\{S_{\hL_0}(t)\}_{t\ge0}$ on $X$ with respect to the supremum norm $\|\cdot\|$,
it suffices to  show that for any $u_0\in X$ with $u_0(\bx)\ge 0$ on $\overline\Omega$,
one has $u(t,\bx)\ge 0$ in $(0,\infty)\times\overline\Omega$.
The latter can be checked by taking $v=-u^{-}:=\min\{u,0\}$ in the above weak form
to get that
\begin{align*}
\int_{\Omega} u_t(t,\bx) u^-(t,\bx) \,\d \bx
& = \frac{c_{d,s}}{2} \int_{u(t,\bx)<0}\int_{u(t,\by)<0}\frac{(u(t,\bx)-u(t,\by))^2}{|\bx-\by|^{d+2s}}\,\d \bx \d\by \\
& \quad + c_{d,s} \int_{u(t,\bx)<0}\int_{u(t,\by)\ge0}\frac{(u(t,\bx)-u(t,\by))u(t,\bx)}{|\bx-\by|^{d+2s}}\,\d \bx \d\by
\ge 0,
\end{align*}
which deduces
$$0\le \int_{u(t,\bx)<0} u^2(t,\bx) \,\d\bx \le \int_{u_0(\bx)<0} u^2_0(\bx) \,\d\bx = 0,$$
thus  $u(t,\bx)\ge0$ in $(0,\infty)\times\overline\Omega$.
With Lemma \ref{lem_L_semigroup}-(ii) established,
the first part of the proof of Theorem \ref{thm_model_max} can also be modified accordingly so that the same theorem remains valid.

\end{example}

To sum up, for the above examples, we can obtain the following result on the MBP according to Theorem \ref{thm_model_max}.

\begin{corollary}
\label{eg_cor_modeleq}
Let the nonlinear function $f_0$ be given by
either \eqref{eg_f_logistic} with even $p$ or \eqref{eg_f_flory} and the operator $\hL$ defined by
any of  \eqref{eg_L_elliptic}, \eqref{eg_L_nonlocal_var}
and \eqref{eg_L_fracLaplace}.
If \eqref{cond_initial_BC} holds,
then the solution of the model problem \eqref{model_eq} exists
and satisfies $|u(t,\bx)|\le\beta$ for any $t\ge0$ and $\bx\in\widehat{\Omega}$,
where $\beta=1$ for the case of \eqref{eg_f_logistic}  and $\beta=\rho$ for \eqref{eg_f_flory}, respectively.
\end{corollary}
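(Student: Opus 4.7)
The plan is to derive Corollary \ref{eg_cor_modeleq} as a direct application of Theorem \ref{thm_model_max} by verifying that each admissible combination of $f_0$ and $\hL$ from the statement satisfies Assumptions \ref{assump_L} and \ref{assump_f}, together with the selection of a suitable $\beta>0$ and a stabilizing $\kp>0$ as in \eqref{kappacon}.

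First, I would dispose of the nonlinear side. For $f_0$ given by \eqref{eg_f_logistic} with even $p$, Example \ref{eg_nonlinear_logistic} already records that $f_0$ is $C^1$ and that $f_0(\beta)\le 0\le f_0(-\beta)$ for any $\beta\ge 1$; choosing $\beta=1$ gives the assertion. For $f_0$ in \eqref{eg_f_flory}, Example \ref{eg_nonlinear_flory} shows that the positive root $\rho$ of $f_0$ lies in $\bigl(\sqrt{1-\theta/(2\theta_c-\theta)},\,1\bigr)$ and that $f_0(\beta)\le 0\le f_0(-\beta)$ whenever $\beta\in[\rho,1)$; taking $\beta=\rho$ gives Assumption \ref{assump_f}. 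Both $f_0$ are $C^1$ on the relevant closed interval, so \eqref{kappacon} is achieved by any sufficiently large $\kp$ (the explicit bounds already appear in the two examples).

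Second, I would address the linear side. For $\hL$ of the form \eqref{eg_L_elliptic}, Example \ref{eg_linear_elliptic} verifies all three parts of Assumption \ref{assump_L}: part (a) via the sign of $\nabla^2 w$ at an interior maximizer together with the positive semi-definiteness argument through diagonalization of $A(\bx_0)$; parts (b) and (c) via standard density and elliptic solvability. For $\hL$ of the form \eqref{eg_L_nonlocal_var} with an integrable radial kernel, Example \ref{eg_linear_nonlocal} gives (a) immediately from the nonnegativity of $\gamma_\delta$, (b) from $D(\hL_0)=X$, and (c) from the convolution regularity argument for $\lambda w-\hL_0 w=f$. For $\hL$ of the form \eqref{eg_L_fracLaplace}, Example \ref{eg_linear_fraclaplace} does not verify Assumption \ref{assump_L} in full, but it supplies Lemma \ref{lem_L_semigroup}-(ii) directly via the Beurling--Deny criterion, which is the only consequence of Assumption \ref{assump_L} actually used in the proof of Theorem \ref{thm_model_max}; so the conclusion of Theorem \ref{thm_model_max} continues to hold as indicated at the end of Example \ref{eg_linear_fraclaplace}.

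Third, with these verifications in place, Theorem \ref{thm_model_max} yields for each pairing a unique solution $u\in C([0,T];\hX)$ on every $[0,T]$ with $\|u(t)\|\le\beta$, which is exactly the statement of the corollary with $\beta=1$ in the first case and $\beta=\rho$ in the second. The only point that requires genuine attention is the fractional Laplacian case, since it lies outside the direct scope of Lemma \ref{lem_L_semigroup}; the remedy is to observe that the proof of Theorem \ref{thm_model_max} only uses the contraction semigroup generated by $\hL_0$ through the Duhamel representation and Banach fixed point on $C_g([0,t_1];\hX_\beta)$, both of which require only Lemma \ref{lem_L_semigroup}-(ii) and the pointwise sign property at a maximizer used in \eqref{thm_mbp_pf1}. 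That pointwise property itself follows, for \eqref{eg_L_fracLaplace}, from the same integrand-nonnegativity argument as in \eqref{eg_L_nonlocal_var}, so the adaptation is routine. This is the only step I would expect to demand any care; everything else is a bookkeeping exercise of matching the examples to the hypotheses of Theorem \ref{thm_model_max}.
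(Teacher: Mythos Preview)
Your proposal is correct and matches the paper's approach: the corollary is stated without proof immediately after Examples \ref{eg_linear_elliptic}--\ref{eg_linear_fraclaplace} as a direct application of Theorem \ref{thm_model_max}, with the verifications of Assumptions \ref{assump_L} and \ref{assump_f} already carried out in those examples and in Examples \ref{eg_nonlinear_logistic}--\ref{eg_nonlinear_flory}. Your explicit treatment of the fractional Laplacian subtlety (bypassing Assumption \ref{assump_L} via Lemma \ref{lem_L_semigroup}-(ii) directly) is exactly what the paper indicates at the end of Example \ref{eg_linear_fraclaplace}.
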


\begin{remark}
When $f_0$ is given by either \eqref{eg_f_logistic} with general $p$ {or \eqref{pr_f0}},
based on Remark \ref{rem1} and Corollary \ref{eg_cor_modeleq},
we can  derive a  similar result on the MBP for the model equation \eqref{model_eq} in the form of
$m\le u(t,\bx)\le M$ for any $t\ge0$ and $\bx\in\widehat{\Omega}$.
\end{remark}

\subsubsection{Finite dimensional examples}
\label{sect_finitedim}

We consider some concrete linear operators for the situation of $\dim\hX<\infty$,
in particular, the case of $(\Omega,\Omega_c)$ is given by Case (D2).
Let us denote by $\hL_h$ the linear operator, with the subscript used
to distinguish it from the notation in the infinite dimensional cases.
Without loss of generality,
we assume that $\Omega$ is the node set of a uniform Cartesian mesh
on the hypercube $\widetilde{\Omega}=(0,1)^d$.
The choice of uniform mesh simplifies the notation but we note that
the analysis can be extended to non-uniform and non-Cartesian grids as discussed later
with respect to finite element (or finite volume) discretizations.
With the uniform Cartesian mesh, for a given $M_0\in\N_+$ and the uniform mesh size $h=1/M_0$, let
$\Omega =\{\bx_\bi=h\bi\,|\,\bi\in\{1,2,\dots,M_0-1\}^d\}$,
$\overline{\Omega} =\{\bx_\bi=h\bi\,|\,\bi\in\{0,1,2,\dots,M_0\}^d\}$,
and $\overline{\Omega}_+ =\{\bx_\bi=h\bi\,|\,\bi\in\{1,2,\dots,M_0\}^d\}$.
Thus, $X$ is isomorphic to $\R^{M^d}$ with the norm $\|\cdot\|$ equivalent to the vector infinity-norm.
Here, for Case (C1), we let $M=M_0-1$ if $\widetilde\Omega_c=\partial\widetilde\Omega$ and
$M=M_0+1$ otherwise,
while $M=M_0$ for Case (C2).
The mesh can be extended as needed to represent ${\widetilde\Omega}_c^*$,
and the solution on ${\Omega}_c^*$ (mesh nodes in $\widetilde\Omega_c^*$) is explicitly given for Case (C1)
or can be obtained from that on $\overline{\Omega}_+$ by the periodicity of the solution in ${\R^d}$ for Case (C2).

We may view the grid function $w\in X$ as a vector $\bw=(w_1,w_2,\dots,w_{M^d})^T\in\R^{M^d}$
with $w_i$ denoting the values of $w$ at nodes $\bx_\bi\in\Omega^*$ {(for Case (C1))
or $\bx_\bi\in\overline{\Omega}_+$ (for Case (C2))}
ordered lexicographically.
Then, the nonlinear mapping $f$ plays a role as a vector-valued function $\R^{M^d}\to\R^{M^d}$ satisfying
$$(f[\bw])_i=f_0(w_i),\quad\forall\,\bw\in\R^{M^d},\ 1\le i\le M^d.$$
In addition, {due to the discrete nature}, we now have that  $D(\hL_{h0})=X$ and
the linear operator $\hL_{h0}$  is equivalent to $\hL_h|_{X}$, which is  an $M^d$-by-$M^d$ matrix. Thus,
the semigroup generated by $\hL_{h0}$ is actually the matrix exponential $S_{\hL_{h0}}(t)=\e^{t\hL_{h0}}$.
In these settings, {Assumption \ref{assump_L}-(b) is trivial and
Assumption \ref{assump_L}-(c) could be verified more directly by the following result.}

\begin{proposition}
\label{prop_distlinear}
If a matrix $\hL_{h0}=(a_{ij})_{M^d\times M^d}$ satisfies
\begin{equation}
\label{assump_weakNDD}
|a_{ii}|\ge\sum_{\substack{j=1\\ j\not=i}}^{M^d}|a_{ij}|,\quad
a_{ii}<0,\quad a_{ij}\ge0\ (j\not=i)
\end{equation}
for all $\,i,j=1,2,\dots,M^d$, then $\hL_{h0}$ satisfies {Assumption \ref{assump_L}-(c)}.
\end{proposition}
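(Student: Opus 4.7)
The plan is to exploit the fact that in the finite dimensional setting, surjectivity of the linear map $\lambda_0 \hI - \hL_{h0}$ on $X \simeq \R^{M^d}$ is equivalent to its invertibility, so it suffices to prove that $\lambda_0 I - \hL_{h0}$ is a nonsingular matrix for some (in fact, every) $\lambda_0 > 0$. The key observation is that the sign and dominance conditions in \eqref{assump_weakNDD} make the shifted matrix $B(\lambda_0) := \lambda_0 I - \hL_{h0}$ strictly diagonally dominant by rows, after which Gershgorin's circle theorem (or the Levy--Desplanques theorem) yields nonsingularity immediately.

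First I would fix an arbitrary $\lambda_0 > 0$ and write $B(\lambda_0) = (b_{ij})$, so that $b_{ii} = \lambda_0 - a_{ii}$ and $b_{ij} = -a_{ij}$ for $j \ne i$. Using $a_{ii} < 0$ I get $|b_{ii}| = \lambda_0 + |a_{ii}|$, while $|b_{ij}| = |a_{ij}|$ for $j \ne i$ because $a_{ij} \ge 0$. The weak diagonal dominance of $\hL_{h0}$ then gives
\begin{equation*}
|b_{ii}| = \lambda_0 + |a_{ii}| \ge \lambda_0 + \sum_{\substack{j=1\\ j \ne i}}^{M^d} |a_{ij}| > \sum_{\substack{j=1\\ j \ne i}}^{M^d} |b_{ij}|,
\end{equation*}
so $B(\lambda_0)$ is strictly diagonally dominant for every $\lambda_0 > 0$.

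Next, I would invoke the standard fact that a strictly diagonally dominant matrix is nonsingular: any eigenvalue $\mu$ lies in some Gershgorin disc centered at $b_{ii}$ with radius $\sum_{j \ne i} |b_{ij}| < |b_{ii}|$, so $\mu \ne 0$. Since $B(\lambda_0)$ has no zero eigenvalue, it is invertible, and consequently the linear map $\lambda_0 \hI - \hL_{h0} : X \to X$ is bijective, in particular surjective. This establishes Assumption \ref{assump_L}-(c).

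There is no real obstacle here; the result is essentially a consequence of Gershgorin. The only point to be careful about is making sure the signs line up correctly when passing from the condition on $\hL_{h0}$ (with $a_{ii} < 0$ and $a_{ij} \ge 0$) to the condition on $B(\lambda_0)$, since the shift by $\lambda_0 I$ strictly increases the magnitude of the diagonal while leaving the row sums of off-diagonal magnitudes unchanged. It is also worth remarking in passing that \eqref{assump_weakNDD} is slightly stronger than what is strictly needed: weak diagonal dominance together with the sign pattern is already enough because the positive shift $\lambda_0$ upgrades it to strict dominance.
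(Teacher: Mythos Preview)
Your argument is correct: shifting by $\lambda_0>0$ turns the weak diagonal dominance of $\hL_{h0}$ into strict diagonal dominance of $\lambda_0 I-\hL_{h0}$, and Gershgorin (Levy--Desplanques) then gives invertibility, hence surjectivity on the finite-dimensional space $X$. The paper itself states Proposition~\ref{prop_distlinear} without proof, so there is no alternative argument to compare against; your approach is the natural one.
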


\begin{example}[{\em Central difference operator for the Laplacian}]
\label{eg_laplacian_dis}
\rm To make the discussion more clearly, let us begin with the one-dimensional case.
Now, $\hX\simeq\R^{M_0+1}$.
For Case (C1), we have $\partial X\simeq\R^2$.
For any $w\in\hX$, the second-order central difference approximation of the Laplacian is defined by
\begin{equation}
\label{eg_local_diff}
\hL_hw(x_i)=\frac{1}{h^2}\big(w(x_{i-1})-2w(x_i)+w(x_{i+1})\big)
\end{equation}
for $x_i\in\Omega$ with $w(x_0)$ and $w(x_{M_0})$ determined by the boundary condition.
For Case (C2), for a grid function $w\in\hX$,
we still define $\hL_h$ as \eqref{eg_local_diff} for $x_i\in\overline{\Omega}_+$
but with $w(x_0)=w(x_{M_0})$ and $w(x_{M_0+1})=w(x_1)$ due to the periodicity.
Assumption \ref{assump_L}-(a) holds according to \eqref{eg_local_diff}.
Define
\[
D_h=\frac{1}{h^2}
\begin{pmatrix}
-2 & 1 & ~ & ~ & c\\
1 & -2 & 1 \\
~ & \ddots & \ddots & \ddots \\
~ & ~ & 1 & -2 & 1 \\
c & ~ & ~ & 1 & -2
\end{pmatrix}
\in\R^{M\times M},\quad\text{with }
c=
\begin{cases}
0, & \text{for Case (C1)},\\
1, & \text{for Case (C2)},
\end{cases}
\]
and $D_h$ satisfies \eqref{assump_weakNDD} obviously.
We have $\hL_{h0}=D_h$, so Assumption \ref{assump_L}-(c) is satisfied.

For the two- and three-dimensional cases,
we define $\hL_h$ as a summation of the central difference approximations
in every coordinate direction given as \eqref{eg_local_diff}.
Thus, $\hL_h$ satisfies Assumption \ref{assump_L}-(a).
Also, we can present the matrix $\hL_{h0}$ by using the Kronecker products as
\begin{align}
\hL_{h0} & =I_M\otimes D_h+D_h\otimes I_M,\quad\;\qquad\qquad\qquad\qquad\qquad\qquad\quad\;\;\text{(2-D)}\label{eg_local_diff2d}\\
\hL_{h0} & =I_M\otimes I_M\otimes D_h+I_M\otimes D_h\otimes I_M+D_h\otimes I_M\otimes I_M,\qquad\text{(3-D)}\label{eg_local_diff3d}
\end{align}
where $I_M$ is the $M\times M$ identity matrix,
and the vectors determined by the boundary conditions could be given in the similar way.
It is easy to check that $\hL_{h0}$ corresponding to \eqref{eg_local_diff2d} or \eqref{eg_local_diff3d}
satisfies Assumption \ref{assump_L}-(c).
\end{example}

\begin{remark}
It is well-known that \eqref{eg_local_diff2d}
provides the central difference approximation of the Laplacian in the two-dimensional case.
However, if one considers more general elliptic operators, such as \eqref{eg_L_elliptic},
including mixed partial derivatives and a convection term,
the corresponding discretized operator cannot be written in the form of Kronecker products.
Instead, the upwind formulas are adequate to discretize the derivatives in the convection term
and those in the mixed partial derivatives.
The detailed analysis of the resulted linear operator
is quite similar to Example \ref{eg_laplacian_dis} and we omit it.
A general formulation for the finite difference approximation of \eqref{eg_L_elliptic}
could be found in \cite{Ciarlet70},
where a sufficient and necessary condition (more general than Proposition \ref{prop_distlinear})
is given for the discrete MBP of the approximating operator.
\end{remark}

\begin{example}[{\em Quadrature-based difference operator for the nonlocal diffusion}]
\label{eg_nonlocal_dis}
\rm Recalling the nonlocal diffusion operator \eqref{eg_L_nonlocal_var},
we consider its quadrature-based finite difference discretization,
which is well known to be asymptotically compatible \cite{DuTaTiYa19,TiDu13}.
The dimension of $\Omega_c$ clearly depends on the value of $\delta$ in this case.
For a given {$w\in\hX$}, we define at $\bx_\bi\in\overline\Omega$ for Case (C1)
or $\bx_\bi\in\overline{\Omega}_+$ for Case (C2) via
\begin{equation}
\label{eg_L_nonlocal_discrete}
\hL_hw(\bx_\bi)
= \sum_{\bm{0}\not=\bs_\bj\in B_\delta(\bm{0})}
\frac{w(\bx_\bi+\bs_\bj)+w(\bx_\bi-\bs_\bj)-2w(\bx_\bi)}{|\bs_\bj|^2}|\bs_\bj|_1\beta_\delta(\bs_\bj),
\end{equation}
where the values of $w(\by)$ with $\by\in\Omega_c^*$
could be supplementally defined using the boundary conditions
so that the operator \eqref{eg_L_nonlocal_discrete} is well-defined.
Here, $|\cdot|_1$ stands for the vector $1$-norm, and
\begin{equation*}
\beta_\delta(\bs_\bj)=\frac{1}{2}\int_{B_\delta(\bm{0})}\psi_\bj(\bs)\frac{|\bs|^2}{|\bs|_1}\gamma_\delta(|\bs|)\,\d\bs,
\end{equation*}
where $\psi_\bj$ is the piecewise $d$-multilinear basis function
satisfying $\psi_\bj(\bs_\bi)=0$ when $\bi\not=\bj$ and $\psi_\bj(\bs_\bj)=1$.
{It is obvious that $\hL_h$ given by \eqref{eg_L_nonlocal_discrete} satisfies Assumption \ref{assump_L}-(a).}
By ordering the nodes lexicographically, we regard the function $w\in X$ as a vector in $\R^{M^d}$
and it is easy to check that the matrix $\hL_{h0}$, the restriction of $\hL_h$ on $X$,
is diagonally dominant in the sense of \eqref{assump_weakNDD}.
Thus, {Assumption \ref{assump_L}-(c)} is satisfied.
Moreover, $\hL_{h0}$ is a band matrix with the bandwidth depending on the value of $\delta$.
\end{example}

\begin{example}[{\em Finite difference operator for the fractional Laplacian}]
\rm For the fractional Laplace operator \eqref{eg_L_fracLaplace},
we consider its finite difference discretization in Case (C1),
and for simplicity the homogeneous Dirichlet boundary condition is enforced
so that the operators $\hL_h$ and $\hL_{h0}$ are essentially the same in this case.
For the one-dimensional case, a second-order finite difference discretization \cite{DuWyZh18,HuOb14}
(a limiting case of a similar scheme given in \cite{TiDu13} for nonlocal diffusion models) is
defined by:
\begin{equation}
\label{eg_L_frac_discrete}
\hL_h w(x_i) = \sum_{j=0}^{M_0} (\hL_{h0})_{ij} w(x_j), \quad x_i\in\overline\Omega,
\end{equation}
for any $w\in X$, where the entries of the matrix $\hL_{h0}$ are given by
\[\small
(\hL_{h0})_{ij} = \frac{c_{1,s}}{\nu h^{2s}}
\begin{dcases}
- (2^\nu+\lfloor\gamma\rfloor-1) - \sum_{k=2}^{M_0-1} \frac{(k+1)^\nu-(k-1)^\nu}{k^\gamma} \\
\qquad\qquad\qquad\qquad - \frac{M_0^\nu-(M_0-1)^\nu}{M_0^\gamma} - \frac{\nu}{s M_0^{2s}}, & j=i, \\
\frac{1}{2} (2^\nu+\lfloor\gamma\rfloor-1), & |j-i|=1, \\
\frac{(|j-i|+1)^\nu-(|j-i|-1)^\nu}{2|j-i|^\gamma}, & 2\le |j-i|\le M_0-1, \\
\frac{M_0^\nu-(M_0-1)^\nu}{2M_0^\gamma}, & |j-i|=M_0
\end{dcases}
\]
for $0\le i,j\le M_0$,
where $\nu=(1-s)\lfloor\gamma\rfloor$ with $\gamma=2$ or $\gamma=1+s$.
Obviously, the matrix $\hL_{h0}$ is diagonally dominant in the sense of \eqref{assump_weakNDD},
and thus Assumptions \ref{assump_L}-(a) and \ref{assump_L}-(c) are satisfied.
For the two- and three-dimensional cases,
the operators $\hL_h$ and $\hL_{h0}$ could be given in the similar way,
we refer to \cite{DuZh19,HuOb14} for details,
and it is easy to verify that Assumption \ref{assump_L} is still satisfied.
\end{example}

\begin{example}[{\em Finite element operator for the Laplacian}]
\label{eg_laplacian_fem}
\rm Let us consider Case (C1) again.
Let $\hT_h=\{K\}$ be a uniform partition of $\widetilde{\Omega}$
into isosceles right triangles $K$ non-overlapping with each other
based on the set of nodes $\overline{\Omega}$.
Let $V_h$ be the space of continuous and piecewise linear functions with respect to $\hT_h$
and $V_h^0$ be its subspace with zero-trace:
$$V_h=\{w\in C(\overline{\widetilde{\Omega}})\,|\,\text{$w|_K$ is linear for any $K\in\hT_h$}\},
\quad V_h^0=\{w\in V_h\,|\,w|_{\partial\widetilde{\Omega}}=0\}.$$
We denote by $N_I=(M_0-1)^d$ and $N_B=(M_0+1)^d-N_I$ the numbers
of the nodes in $\widetilde{\Omega}$ and on $\partial\widetilde{\Omega}$ respectively,
and order the nodes such that $\Omega=\{\bx_j\,|\,1\le j\le N_I\}$ consisting of all interior nodes
and $\partial\Omega=\{\bx_{j+N_I}\,|\,1\le j\le N_B\}$ all nodes on the boundary.
Denote by $\{\phi_j\,|\,1\le j\le N_I+N_B\}$ the basis functions of $V_h$
satisfying $\phi_i(\bx_j)=\delta_{ij}$ for any $1\le i,j\le N_I+N_B$.
Define the operator $\hL_h$ for $w\in\hX\simeq V_h$ by
\begin{equation}
\label{eg_fem_L}
\hL_h w(\bx_i) = \sum_{j=1}^{N_I+N_B} l_{ij} w(\bx_j), \quad \bx_i \in \Omega,
\end{equation}
where
\[
l_{ij} = - \frac{\int_{\widetilde{\Omega}}\nabla\phi_i\cdot\nabla\phi_j\,\d\bx}{\int_{\widetilde{\Omega}}\phi_i\,\d\bx},
\quad 1\le i\le N_I, \ 1\le j\le N_I+N_B.
\]
For each $i=1,2,\dots,N_I$, it is known that
\begin{equation}
\label{eg_fem_pf1}
\int_{\widetilde{\Omega}}\phi_i\,\d \bx>0,\quad\int_{\widetilde{\Omega}}|\nabla\phi_i|^2\,\d \bx>0,\quad
\text{and}\quad\int_{\widetilde{\Omega}}\nabla\phi_i\cdot\nabla\phi_j\,\d \bx\le0\ \text{for $j\not=i$}.
\end{equation}
Noticing that $\sum\limits_{j=1}^{N_I+N_B}\phi_j(\bx)\equiv1$ on $\overline{\widetilde{\Omega}}$,
we have that
\begin{equation}
\label{eg_fem_pf2}
\sum_{j=1}^{N_I+N_B}\int_{\widetilde{\Omega}}\nabla\phi_i\cdot\nabla\phi_j\,\d\bx
=\int_{\widetilde{\Omega}}\nabla\phi_i\cdot\nabla\bigg(\sum_{j=1}^{N_I+N_B}\phi_j\bigg)\,\d\bx=0, \quad 1\le i\le N_I.
\end{equation}
Combination of \eqref{eg_fem_pf1} and \eqref{eg_fem_pf2} yields
\begin{equation}
\label{eg_fem_pf3}
|l_{ii}|=\sum_{\substack{j=1\\ j\not=i}}^{N_I+N_B}|l_{ij}|,\quad
l_{ii}<0,\quad l_{ij}\ge0\ (j\not=i)
\end{equation}
for $1\le i\le N_I$ and $1\le j\le N_I+N_B$,
which implies Assumption \ref{assump_L}-(a).
The limitation of $\hL_h$ on $X\simeq V_h^0$ is given by
the matrix $\hL_{h0}\in\R^{N_I\times N_I}$ with $(\hL_{h0})_{ij}=l_{ij}$, $i,j=1,2,\dots,N_I$.
Then, by \eqref{eg_fem_pf3}, $\hL_{h0}$ satisfies \eqref{assump_weakNDD} and Assumption \ref{assump_L}-(c) holds.
\end{example}

\begin{remark}
For a general partition of $\widetilde{\Omega}$,
the FEM-based linear operator could be also defined by \eqref{eg_fem_L}
and still satisfies \eqref{assump_weakNDD}, see, e.g., \cite{NoZh18} for  discussions.
In fact, it is well-known in the literature that the discrete maximum principle holds
for piecewise linear finite element approximations of the Laplacian on general two-dimensional Delaunay triangulations
for which the circumscribing circle of any Delaunay triangle does not contain any other vertices in its interior,
see \cite{DuNiWu98}.
The brief note \cite{BuEr04} analyzes a stabilized Galerkin approximation of the Laplacian
guaranteeing the discrete maximum principle on arbitrary meshes.
In addition, one can also apply the finite volume method (FVM) \cite{LiChWu00} to obtain the discretized operator of the Laplacian.
The FVM-based linear operator possesses similar properties as those we have considered in the above.
\end{remark}

Applying Theorem \ref{thm_model_max} to the cases with
$f_0$ given by either Example \ref{eg_nonlinear_logistic} or \ref{eg_nonlinear_flory}
(or Example \ref{pr} when a transformation is first applied as explained in Remark \ref{rem1})
and $\hL=\hL_h$ determined by any one of Examples \ref{eg_laplacian_dis}--\ref{eg_laplacian_fem},
we obtain the following result on the MBP of \eqref{model_eq} in the space-discrete version.

\begin{corollary}
Let the nonlinear function $f_0$ be given by
either \eqref{eg_f_logistic} with even $p$ or \eqref{eg_f_flory}
and the linear operator $\hL=\hL_h$ defined by
any of \eqref{eg_local_diff}, \eqref{eg_L_nonlocal_discrete}, \eqref{eg_L_frac_discrete} and \eqref{eg_fem_L}.
If \eqref{cond_initial_BC} holds,
then the solution of the model problem \eqref{model_eq} exists
and satisfies $|u(t,\bx)|\le\beta$ for any $t\ge0$ and $\bx\in\widehat{\Omega}$,
where $\beta=1$ for the case of \eqref{eg_f_logistic} and $\beta=\rho$ for \eqref{eg_f_flory}, respectively.
\end{corollary}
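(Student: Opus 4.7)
The proof is essentially an aggregation result: once we have verified the hypotheses of Theorem~\ref{thm_model_max} for every admissible pairing of $f_0$ and $\hL_h$, the conclusion is immediate. My plan is therefore to walk through the hypothesis-checking systematically, leaning on the individual examples already presented, rather than to rederive any analytical property from scratch.

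First, I would confirm Assumption~\ref{assump_f} for both admissible nonlinearities. For $f_0$ given by \eqref{eg_f_logistic} with $p$ even, Example~\ref{eg_nonlinear_logistic} records that one may take any $\beta\ge1$, and in particular $\beta=1$ gives $f_0(1)=0=f_0(-1)$, so \eqref{lincon} holds. For $f_0$ given by \eqref{eg_f_flory}, Example~\ref{eg_nonlinear_flory} shows that $\beta=\rho$ (the positive root from \eqref{eg_f_flory_gamma}) satisfies $f_0(\rho)=0=-f_0(-\rho)$ by the oddness of $f_0$. In both cases $f_0$ is continuously differentiable on its domain of interest, so the composite mapping description \eqref{assump_f_composite} trivially holds, completing the verification.

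Next, I would verify Assumption~\ref{assump_L} for each of the four discrete operators. In the finite-dimensional setting, Assumption~\ref{assump_L}-(b) is trivial because $D(\hL_{h0})=X$, and Proposition~\ref{prop_distlinear} reduces Assumption~\ref{assump_L}-(c) to checking the sign pattern and diagonal dominance condition \eqref{assump_weakNDD}. For each of \eqref{eg_local_diff} (and its tensor-product form \eqref{eg_local_diff2d}--\eqref{eg_local_diff3d}), \eqref{eg_L_nonlocal_discrete}, \eqref{eg_L_frac_discrete}, and \eqref{eg_fem_L}, the corresponding example has already established both the sign/dominance structure required by \eqref{assump_weakNDD} and the discrete version of Assumption~\ref{assump_L}-(a) (namely that if a grid function attains its maximum at $\bx_\bi\in\Omega^*$, then $\hL_h w(\bx_\bi)\le0$, which follows from the nonnegativity of the off-diagonal entries combined with the zero row-sum or dominance property).

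With Assumptions~\ref{assump_L} and \ref{assump_f} in hand, and the hypothesis \eqref{cond_initial_BC} assumed, Theorem~\ref{thm_model_max} applied on any interval $[0,T]$ yields existence, uniqueness, and the bound $\|u(t)\|\le\beta$ for every $t\in[0,T]$; passing $T\to\infty$ gives the stated MBP for all $t\ge0$, with $\beta=1$ in the logistic case and $\beta=\rho$ in the Flory--Huggins case. I do not anticipate a genuine obstacle here: the only step that required real work is bundled into Proposition~\ref{prop_distlinear}, which supplies the surjectivity of $\lambda_0\hI-\hL_{h0}$ from the diagonal dominance. The remaining burden is bookkeeping across the four examples, so I would make the proof brief by citing each example in turn rather than reproducing the matrix-entry calculations.
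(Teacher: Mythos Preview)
Your proposal is correct and matches the paper's approach: the corollary is stated as an immediate consequence of Theorem~\ref{thm_model_max} once Assumptions~\ref{assump_L} and~\ref{assump_f} have been verified in the individual Examples~\ref{eg_nonlinear_logistic}--\ref{eg_nonlinear_flory} and~\ref{eg_laplacian_dis}--\ref{eg_laplacian_fem}, and the paper itself offers no further argument beyond this citation. Your expanded discussion of how each assumption is checked (particularly the reduction to \eqref{assump_weakNDD} via Proposition~\ref{prop_distlinear}) is accurate bookkeeping that the paper leaves implicit.
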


\subsection{Discussion on homogeneous Neumann boundary conditions}
\label{sec:Neu}

In Section \ref{subsect_framework},
 the MBPs for the cases of Dirichlet and periodic boundary conditions are studied,
 let us now discuss the case of  homogeneous Neumann boundary condition.
For simplicity, we focus the analysis on the infinite dimensional operators.
The key ingredient is to find a Neumann operator $\hN_c$ such that the following Gaussian formula holds:
\begin{equation*}
\int_{\Omega^*} \hL u(\bx) \,\d\bx = \int_{\Omega_c^*} \hN_c u(\bx) \,\d\bx.
\end{equation*}
The homogeneous Neumann boundary condition then could be given by
\begin{equation}
\label{neu cont}
\hN_c u(\bx)=0, \quad \bx\in\Omega_c^*.
\end{equation}

For the second-order elliptic operator in the divergence form  \eqref{eg_L_elliptic_div},
using the classic Gaussian formula and assuming $\widetilde{\bm{q}}$ is divergence-free,
we obtain
\begin{equation*}
\int_\Omega \hL u(\bx) \,\d\bx
= \int_{\partial\Omega} (A(\bx)\nabla u(\bx) + \widetilde{\bm{q}}(\bx)u(\bx))\cdot\bn(\bx) \,\d\bx,
\end{equation*}
where $\bn$ denotes the unit outward normal vector to $\partial\Omega$.
Thus, the Neumann operator $\hN_c$ could be defined as
\begin{equation*}
\hN_c u(\bx)=A(\bx)\nabla u(\bx)\cdot\bn(\bx) + u(\bx)\widetilde{\bm{q}}(\bx)\cdot\bn(\bx), \quad \bx\in\partial\Omega,
\end{equation*}
and the corresponding constraint becomes the classic Robin boundary condition.
Alternatively, if we apply the classic Gaussian formula
only to the diffusion part $\hL_{\text{diff}} u(\bx) = \nabla\cdot(A(\bx)\nabla u(\bx))$,
then $\hN_c$ could be also defined by
\begin{equation}
\label{rmk_neumann}
\hN_c u(\bx)=A(\bx)\nabla u(\bx)\cdot\bn(\bx), \quad \bx\in\partial\Omega,
\end{equation}
and the corresponding constraint becomes the classic Neumann boundary condition.

For the nonlocal diffusion operator $\hL$ defined in \eqref{eg_L_nonlocal},
by the symmetry of the kernel, we have
\begin{align*}
& \int_{\widehat{\Omega}} \int_{\widehat{\Omega}}\gamma(\bx,\by)(u(\by)-u(\bx))\,\d \by \d\bx
= \int_{\widehat{\Omega}} \int_{\widehat{\Omega}}\gamma(\bx,\by)(u(\bx)-u(\by))\,\d \bx \d\by
= 0, \\
& \int_{\Omega_c^*} \int_{\Omega_c^*}\gamma(\bx,\by)(u(\by)-u(\bx))\,\d \by \d\bx
= \int_{\Omega_c^*} \int_{\Omega_c^*}\gamma(\bx,\by)(u(\bx)-u(\by))\,\d \bx \d\by
= 0.
\end{align*}
Then we have
$$\int_{\Omega^*} \hL u(\bx) \,\d\bx = \int_{\Omega_c^*} \hN_c u(\bx) \,\d\bx,$$
where $\hN_c$ is defined by
\begin{equation*}
\hN_c u(\bx) =  \int_\Omega \gamma(\bx,\by)(u(\bx)-u(\by))\,\d \by, \quad \bx\in\Omega_c^*.
\end{equation*}
This definition, different from those in \cite{DuGuLeZh12,DuGuLeZh13},
can be found in \cite{Du19} as a nonlocal extension of \eqref{rmk_neumann}.

We next consider the model problem \eqref{model_eq}
subject to the homogeneous Neumann boundary condition \eqref{neu cont}.
In order to show that the MBP is still satisfied for the above classic  and nonlocal  cases,
we need to generate a contraction semigroup as before
so that all the analysis conducted above could be similarly used.
To this end, we still  let $\hX=C(\overline{\Omega})$ if $\Omega_c=\partial\Omega$
and $\hX=C(\overline{\Omega})\cap C_b(\Omega_c^*)$ otherwise, and
 Assumption \ref{assump_L}-(a) is clearly true for  the above two operators $\hL$.
Then we need to define $X$ and $\hL_0$ appropriately and verify whether all conditions related to them in Remark \ref{proneu}
hold.

For the classic elliptic differential operator case, without loss of generality,
let us assume $A(\bx) \equiv I_d$ and $\bm{q}(\bx)\equiv\bm{0}$ in \eqref{eg_L_elliptic}.
For any $w\in C^2(\overline{\Omega})$, we can directly extend the domain of $\Delta w$
from $\Omega$ to $\overline\Omega$ by continuity so that $\hL=\Delta: C^2(\overline{\Omega})\to C(\overline{\Omega})$.
Let us define $X=C(\overline{\Omega})$,  {$D(\hL_0)=\{w\in C^2(\overline{\Omega})\,|\,(\nabla w\cdot\bn)|_{\partial\Omega}=0\}$}
and $\hL_0=\hL|_{D(\hL_0)}$,
then Assumptions \ref{assump_L}-(b) and \ref{assump_L}-(c) obviously hold.
For any $w\in D(\hL_0)$, assume that \eqref{L_cond_c} is true for some $\bx_0\in\partial\Omega$.
By  using the  boundary condition ($\nabla w \cdot\bn)(\bx_0)=0$ and the reflective extension in classic analysis,
we have that $\nabla w(\bx_0)=\bm{0}$ and the Hessian $\nabla^2w(\bx_0)$ is negative semi-definite,
and consequently it holds $\hL_0 w(\bx_0) = \Delta w(\bx_0)\le0$ by using an orthogonal transformation of the coordinates.
Thus, we can similarly show that Lemma \ref{lem_L_semigroup}-(i) is  valid
and consequently $\hL_0$ generates a contraction semigroup $\{S_{\hL_0}(t)\}_{t\ge0}$ on $X$ (i.e., Lemma \ref{lem_L_semigroup}-(ii)).

Next we turn to the nonlocal diffusion operator case, where $\hL$ is defined by \eqref{eg_L_nonlocal_var}
with the integrable kernel as discussed in the Example~\ref{eg_linear_nonlocal}.
For any $w\in C(\overline\Omega)$, there exists a unique $\widetilde{w}\in \hX$
satisfying $\widetilde{w}|_{\overline\Omega}=w$ and $(\hN_c \widetilde w)|_{\Omega_{\delta}}=0$;
in particular, we have
\begin{equation}
\label{nonneuext}
\widetilde w(\bx)
= \dfrac{\int_{\Omega\cap B_{\delta}(\bx)} \gamma_{\delta}(|\bx-\by|) w(\by) \,\d \by}{\int_{\Omega\cap B_{\delta}(\bx)} \gamma_{\delta}(|\bx-\by|) \,\d \by},
\quad \bx\in\Omega_{\delta}.
\end{equation}
By defining $X=\{w|_{\overline\Omega}\,|\,w\in\whX\}$ with $\whX=\{w\in \hX \,|\, (\hN_c w)|_{\Omega_{\delta}}=0\}$,
we can regard $X=C(\overline{\Omega})$ as a linear subspace of $\hX$
in the sense of isomorphism ($X\simeq \whX$) using such Neumann extension.
Assumption \ref{assump_L}-(b) automatically comes from  $D(\hL_0)=C(\overline{\Omega})$.
For any $\lambda>0$ and $f\in C(\overline{\Omega})$,
it is also not hard to verify that the equation $\lambda w-\hL_0 w=f$ in $\overline\Omega$
has a unique solution in $C(\overline{\Omega})$,
similar to the discussion in the earlier example for the zero Dirichlet boundary condition,
which implies Assumption \ref{assump_L}-(c).
For any $w\in C(\overline{\Omega})$, assume that \eqref{L_cond_c} holds for some $\bx_0\in\Omega_c^*$,
then we know by \eqref{nonneuext} that the maximum of $w$ on $\widehat{\Omega}$ must also be attained on $\overline\Omega$.
Thus, according to the proof of Lemma \ref{lem_L_semigroup}-(i),
we see that its conclusion still remains valid,
and consequently we obtain Lemma \ref{lem_L_semigroup}-(ii).

\section{Exponential time differencing for temporal approximation}
\label{sect_ETD_MBP}

In this section, we construct temporal approximations of the model equation \eqref{model_eq}
by using the exponential time differencing (ETD) method.
We will begin with the equivalent form \eqref{model_ODE}
to develop the MBP-preserving ETD schemes of first- and second-order,
followed by the convergence analysis.
For all discussions and theorems established in this section,
we only focus on Case (C1)
while the results are all valid for Case (C2)
by removing the equations containing the boundary term $g$ and using the periodicity of the solution in the proofs.

\subsection{ETD schemes and discrete MBPs}

Given a time step size $\dt>0$, we divide the time interval by $\{t_n=n\dt\}_{n\ge0}$.
To establish the ETD schemes for solving the model problem \eqref{model_eq},
we focus the equivalent equation \eqref{model_ODE}
on the interval $[t_n,t_{n+1}]$,
or equivalently, $w(s,\bx)=u(t_n+s,\bx)$ satisfying the following system
\begin{equation}
\label{model_ODE_solution2}
\begin{dcases}
w_s + \kp w = \hL w + \hN[w], & s\in(0,\dt], \ \bx\in\Omega^*, \\
w(s,\bx) = g(t_n+s,\bx), & s\in[0,\dt], \ \bx\in\Omega_c^*, \\
w(0,\bx) = u(t_n,\bx), & \bx\in\widehat{\Omega}.
\end{dcases}
\end{equation}
For the first-order (in time) approximation of \eqref{model_ODE_solution2},
we set $\hN[u(t_n+s)]\approx\hN[u(t_n)]$ to obtain the first-order ETD (ETD1) scheme:
for $n\ge 0$ and given $v^n$,
find $v^{n+1}=w^n(\dt)$ with  $w^n:[0,\dt]\to \hX$ solving
\begin{equation}
\label{model_ETD1}
\begin{dcases}
w^n_s + \kp w^n = \hL w^n + \hN[v^n], & s\in(0,\dt],  \ \bx\in\Omega^*,\\
w^n(s,\bx) = g(t_n+s,\bx), & s\in[0,\dt],\ \bx\in{\Omega_c^*}, \\
w^n(0,\bx) = v^n(\bx), & \bx\in\widehat{\Omega},
\end{dcases}
\end{equation}
where $v^n$ represents an approximation of $u(t_n)$ and $v^0=u_0$ is given.
Similar to \eqref{thm_mbp_pf1}, it is easy to show that $w^n$ is uniquely defined on $[0,\dt]\times\widehat{\Omega}$,
thus $v^{n+1}$ is well-defined.
We note that unlike the continuous-in-time case,
different choices of $\kp$ do lead to different discretization schemes.
Indeed, $\hL_0-\kp \hI$ serves as the generator of the semigroup $\{\e^{-\kp t}S_{\hL_0}(t)\}_{t\ge0}$
to control the nonlinear term and then to stabilize the time discretizations.
Thus, a suitable choice of $\kp$ becomes very important in our design of ETD schemes.

\begin{theorem}[Maximum bound principle of the ETD1 scheme]
\label{thm_ETD1}
Suppose that Assumptions \ref{assump_L} and \ref{assump_f}, \eqref{kappacon} and \eqref{cond_initial_BC} hold.
Then the ETD1 scheme preserves the discrete MBP unconditionally,
i.e., for any time step size $\dt>0$, the ETD1 solution satisfies $\|v^n\|\le\beta$ for any $n\ge0$.
\end{theorem}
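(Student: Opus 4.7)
The plan is to proceed by induction on $n$, with the base case $\|v^0\|=\|u_0\|\le\beta$ being part of \eqref{cond_initial_BC}. For the inductive step, I assume $\|v^n\|\le\beta$ and let $w^n$ be the solution of \eqref{model_ETD1}, which the paper has already declared to be well-defined on $[0,\dt]\times\widehat{\Omega}$. The key observation is that \eqref{model_ETD1} has exactly the same linear structure as the auxiliary problem \eqref{thm_mbp_pf1} used in the proof of Theorem \ref{thm_model_max}, except that the ``source'' grid function is now the time-independent $v^n$ rather than a candidate trajectory $v(s)$. Consequently, no Banach fixed-point iteration is needed and the maximum-principle step of that earlier proof applies almost verbatim.

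The main argument runs as follows. Suppose $w^n$ attains its global maximum over $[0,\dt]\times\widehat{\Omega}$ at some point $(s^*,\bx^*)$. If $s^*=0$ then $w^n(s^*,\bx^*)=v^n(\bx^*)\le\beta$ by the induction hypothesis; if $\bx^*\in\Omega_c^*$ then $w^n(s^*,\bx^*)=g(t_n+s^*,\bx^*)\le\beta$ by \eqref{cond_BC}. In the remaining case $(s^*,\bx^*)\in(0,\dt]\times\Omega^*$ the point is simultaneously a maximum in $s$ and a global spatial maximum over $\widehat{\Omega}$ at time $s^*$; hence $\partial_s w^n(s^*,\bx^*)\ge 0$, and Assumption \ref{assump_L}-(a) gives $\hL w^n(s^*,\bx^*)\le 0$. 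Substituting these into the PDE of \eqref{model_ETD1} yields $\kp w^n(s^*,\bx^*)\le N_0(v^n(\bx^*))$, and $|v^n(\bx^*)|\le\beta$ together with Lemma \ref{lem_nonlinear}-(i) forces $w^n(s^*,\bx^*)\le\beta$. A symmetric argument applied to the global minimum delivers the lower bound $-\beta$. Evaluating at $s=\dt$ then gives $\|v^{n+1}\|\le\beta$, closing the induction, and since nothing above depends on $\dt$ the bound is unconditional.

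Honestly, there is no serious obstacle: freezing the nonlinearity at the previous time step converts the ETD1 update into precisely the linear comparison problem already analyzed in Theorem \ref{thm_model_max}. The only mild point to watch is that when the candidate extremum $(s^*,\bx^*)$ lies in $\Omega^*$, one needs $\bx^*$ to be a maximizer of $w^n(s^*,\cdot)$ over all of $\widehat{\Omega}$ in order to invoke Assumption \ref{assump_L}-(a); this is automatic because a spatiotemporal maximum is, at the frozen time $s^*$, also a spatial maximum. An alternative route, closer in spirit to the convergence analysis presumably to follow, is the Duhamel representation $v^{n+1}=\e^{-\kp\dt}S_{\hL_0}(\dt)v^n+\int_0^\dt \e^{-\kp(\dt-s)}S_{\hL_0}(\dt-s)\hN[v^n]\,\d s$ (plus a boundary-lifting correction in Case (C1)), whence $\|v^{n+1}\|\le \e^{-\kp\dt}\beta+(1-\e^{-\kp\dt})\beta=\beta$ follows by combining the contractivity $\opnorm{S_{\hL_0}(s)}\le 1$ from Lemma \ref{lem_L_semigroup}-(ii) with $\|\hN[v^n]\|\le \kp\beta$ from Lemma \ref{lem_nonlinear}-(i). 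I would prefer the maximum-principle route for the MBP itself and keep the Duhamel form in reserve for the later error estimates.
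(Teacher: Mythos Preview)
Your proposal is correct and follows essentially the same maximum-principle route as the paper: induction on $n$, location of the spatiotemporal extremum of $w^n$, and the inequality $\kp w^n(s^*,\bx^*)\le N_0(v^n(\bx^*))\le\kp\beta$ via Assumption \ref{assump_L}-(a) and Lemma \ref{lem_nonlinear}-(i). Your additional Duhamel-form remark is a nice complement but is not used in the paper's own proof of this theorem.
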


\begin{proof}
Since $\|v^0\|\le\beta$,
we just need to show that $\|v^k\|\le\beta$ implies $\|v^{k+1}\|\le\beta$ for any $k$.
We have $v^{k+1}=w^k(\dt)$,
where $w^k$ satisfies \eqref{model_ETD1} with the superscript $n$ replaced by $k$.
Suppose there exists $(s^*,\bx^*)\in(0,\dt]\times\widehat{\Omega}$
such that
\begin{equation}
\label{thm_ETD1_pf}
w^k(s^*,\bx^*)=\max_{0\le s\le \dt}\|w^k(s)\|.
\end{equation}
Similar to the first part of the proof of Theorem \ref{thm_model_max},
we can obtain $\kp w^k(s^*,\bx^*)\le \hN[v^k](\bx^*)$,
and then, $w^k(s^*,\bx^*)\le\beta$.
The lower bound $-\beta$ could be obtained similarly.
Thus we obtain $\|w^k(s)\|\le\beta$ for any $s\in[0,\dt]$,
and thus $\|v^{k+1}\|\le\beta$,
which completes the proof.
\end{proof}

Next, we consider the higher-order approximations of the solution of \eqref{model_ODE_solution2}.
Let $P_r(s)$ be an interpolation of $\hN[u(t_n+s)]$
with degree $r\ge1$ on the times $\{s_k=\frac{k}{r}\dt\}_{k=0}^r$.
Then, we approximate $\hN[u(t_n+s)]$
by $P_r(s)$ to obtain the higher-order ETD Runge--Kutta scheme:
{find $v^{n+1}=w^n(\dt)$ with $w^n:[0,\dt]\to \hX$ solving
\begin{equation*}
\begin{dcases}
w^n_t + \kp w^n = \hL w^n + P_r(s), & s\in(0,\dt], \ \bx\in\Omega^*,\\
w^n(s,\bx) = g(t_n+s,\bx), & s\in[0,\dt],  \ \bx\in\Omega_c^*,\\
w^n(0,\bx) = v^n(\bx), &  \bx\in\widehat\Omega,
\end{dcases}
\end{equation*}
where $v^n$ is an approximation of $u(t_n)$ with $v^0=u_0$.}
More precisely, we have
\begin{equation*}
P_r(s)=\sum_{k=0}^r\ell_{r,k}(s)\hN[\tilde{v}^{n+\frac{k}{r}}],
\quad s\in[0,\dt],
\end{equation*}
where $\{\ell_{r,k}(s)\}_{k=0}^r$ are the standard Lagrange basis functions associated with the times $\{s_k\}_{k=0}^r$,
and $\tilde{v}^{n+\frac{k}{r}}$ is an approximated value of $u(t_n+s_k)$, generated by the lower-order ETD schemes and $\tilde{v}^{n}=v^n$.
In the spirit of the proof of Theorem \ref{thm_ETD1},
we know that the discrete MBP would be preserved as long as the following condition is satisfied:
\begin{equation}
\label{interpolation_condition}
\|P_r(s)\|\le\max\{\|\hN[\tilde{v}^{n+\frac{k}{r}}]\|:0\le k\le r\},\quad\forall\,s\in[0,\dt],
\end{equation}
with $\|\tilde{v}^{n+\frac{k}{r}}\|\le\beta$ for all $k=0,1,\dots,r$.
However, the unique interpolation satisfying \eqref{interpolation_condition} corresponds to the case $r=1$,
that is, the linear interpolation
\begin{equation*}
P_1(s)=\Big(1-\frac{s}{\dt}\Big)\hN[\tilde{v}^n]+\frac{s}{\dt}\hN[\tilde{v}^{n+1}],
\quad s\in[0,\dt].
\end{equation*}

Approximating $\hN[u(t_n+s)]$ in \eqref{model_ODE_solution2} by $P_1(s)$,
we obtain the second-order ETD Runge--Kutta (ETDRK2) scheme:
find $v^{n+1}=w^n(\dt)$ with $w^n:[0,\dt]\to \hX$ solving
\begin{equation}
\label{model_ETDRK2}
\begin{dcases}
w^n_t + \kp w^n = \hL w^n + \Big(1-\frac{s}{\dt}\Big)\hN[v^n]+\frac{s}{\dt}\hN[\tilde{v}^{n+1}], & s\in(0,\dt], \ \bx\in\Omega^*,\\
w^n(s,\bx) = g(t_n+s,\bx), & s\in[0,\dt],  \ \bx\in\Omega_c^*,\\
w^n(0,\bx) = v^n(\bx), &  \bx\in\widehat\Omega,
\end{dcases}
\end{equation}
where $\tilde{v}^{n+1}$ is generated by the ETD1 scheme.
It is worth noting that both ETD1 and ETDRK2 schemes are linear.

\begin{theorem}[Maximum bound principle of the ETDRK2 scheme]
\label{thm_ETDRK2}
Suppose that Assumptions \ref{assump_L} and \ref{assump_f}, \eqref{kappacon} and \eqref{cond_initial_BC} hold.
Then the ETDRK2 scheme preserves the discrete MBP unconditionally, i.e., for any time step size $\dt>0$,
the ETDRK2 solution satisfies $\|v^n\|\le\beta$ for any $n\ge0$.
\end{theorem}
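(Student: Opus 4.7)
The plan is to mirror the structure of the proof of Theorem \ref{thm_ETD1} and proceed by induction on $n$, with the key new ingredient being that the approximation $\tilde v^{n+1}$ produced by the ETD1 scheme is itself bounded by $\beta$, and that the coefficients $1-s/\dt$ and $s/\dt$ of the linear interpolant $P_1(s)$ are nonnegative and sum to one. The base case $\|v^0\|=\|u_0\|\le\beta$ is immediate from \eqref{cond_initial_BC}. For the inductive step, assume $\|v^n\|\le\beta$. Applying Theorem \ref{thm_ETD1} with initial value $v^n$, we know the ETD1 predictor $\tilde v^{n+1}$ satisfies $\|\tilde v^{n+1}\|\le\beta$, so both coefficients of the interpolant in \eqref{model_ETDRK2} are evaluated at arguments inside the invariant range.

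Next, I would carry out the same maximum-point argument used in Theorems \ref{thm_model_max} and \ref{thm_ETD1} for the linear evolution problem \eqref{model_ETDRK2} defining $w^n$. Note that $w^n$ is uniquely defined on $[0,\dt]\times\widehat\Omega$ since the right-hand side $(1-s/\dt)\hN[v^n]+(s/\dt)\hN[\tilde v^{n+1}]$ is a fixed element of $\hX$ at each $s$, so the same semigroup representation used to define \eqref{thm_mbp_pf1} applies. Assuming a maximizer $(s^*,\bx^*)\in(0,\dt]\times\widehat\Omega$ of $w^n$ exists, the boundary constraint \eqref{cond_BC} restricts us to $\bx^*\in\Omega^*$; at such a point $w^n_s(s^*,\bx^*)\ge 0$, and by Assumption \ref{assump_L}-(a) together with the definition of $\hL_0$ on $\Omega_c^*$, we get $\hL w^n(s^*,\bx^*)\le 0$. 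Substituting into \eqref{model_ETDRK2} yields
\begin{equation*}
\kp\, w^n(s^*,\bx^*) \le \Bigl(1-\frac{s^*}{\dt}\Bigr)\hN[v^n](\bx^*) + \frac{s^*}{\dt}\hN[\tilde v^{n+1}](\bx^*).
\end{equation*}

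Since $|v^n(\bx^*)|\le\beta$ and $|\tilde v^{n+1}(\bx^*)|\le\beta$, Lemma \ref{lem_nonlinear}-(i) gives $|\hN[v^n](\bx^*)|\le\kp\beta$ and $|\hN[\tilde v^{n+1}](\bx^*)|\le\kp\beta$. Because $1-s^*/\dt$ and $s^*/\dt$ are nonnegative weights summing to one, the right-hand side above is a convex combination of quantities bounded by $\kp\beta$, hence it is itself at most $\kp\beta$. Dividing by $\kp>0$ yields $w^n(s^*,\bx^*)\le\beta$. The symmetric argument at a negative extremum (or applied to $-w^n$) gives the lower bound $-\beta$, so $\|w^n(s)\|\le\beta$ for every $s\in[0,\dt]$, and in particular $\|v^{n+1}\|=\|w^n(\dt)\|\le\beta$.

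The one point deserving care, which I regard as the crux of the argument, is precisely the convexity observation that makes the \emph{linear} interpolation special: it is the only polynomial interpolant satisfying condition \eqref{interpolation_condition} uniformly in $s\in[0,\dt]$, which is exactly what is needed to close the maximum-point estimate without a step-size restriction on $\dt$. All other components—the unique solvability of \eqref{model_ETDRK2}, the attainment of extrema on $\widehat\Omega$ in Cases (C1) and (C2), and the reduction to interior points via the boundary/periodicity condition—have already been established or are immediate adaptations of the corresponding steps in the proofs of Theorems \ref{thm_model_max} and \ref{thm_ETD1}.
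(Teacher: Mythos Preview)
Your proof is correct and follows essentially the same approach as the paper's own proof: induction on $n$, invoking Theorem \ref{thm_ETD1} to bound the predictor $\tilde v^{n+1}$, then the maximum-point argument at $(s^*,\bx^*)$ combined with Lemma \ref{lem_nonlinear}-(i) and the convexity of the linear interpolant. Your write-up is in fact more detailed than the paper's, which dispatches the proof in a few lines by citing the ETD1 argument, but the logic is identical.
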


\begin{proof}
Again, we suppose $\|v^k\|\le\beta$ for some $k$.
Similar to the proof of Theorem \ref{thm_ETD1},
for $(s^{*},\bx^{*})\in(0,\dt]\times\Omega^*$ such that \eqref{thm_ETD1_pf} holds,
we can obtain
\begin{equation*}
\kp w^k(s^{*},\bx^{*})\le \Big(1-\frac{s^*}{\dt}\Big)\hN[v^k](\bx^*) + \frac{s^*}{\dt}\hN[\tilde{v}^{k+1}](\bx^*),
\end{equation*}
{and then, $w^k(s^{*},\bx^{*})\le\beta$ using $\|\tilde{v}^{k+1}\|\le\beta$ by Theorem \ref{thm_ETD1}.}
The lower bound $-\beta$ could be obtained similarly.
Therefore, $\|v^{k+1}\|\le\beta$,
which completes the proof.
\end{proof}

As an application of Theorems \ref{thm_ETD1} and \ref{thm_ETDRK2},
one can obtain the MBPs of the semi-discrete and fully discrete ETD schemes
for the concrete examples presented in Sections  \ref{nonope} and \ref{linope}.

\begin{corollary}
For the space-continuous ETD1 and ETDRK2 schemes with $\hL$ given by
any of  \eqref{eg_L_elliptic}, \eqref{eg_L_nonlocal_var}, and \eqref{eg_L_fracLaplace},
as well as the fully discrete ETD1 and ETDRK2 schemes with the (discretized) linear operator $\hL=\hL_h$ defined by
any of \eqref{eg_local_diff}, \eqref{eg_L_nonlocal_discrete}, \eqref{eg_L_frac_discrete}, or \eqref{eg_fem_L}, it holds that

{\rm(i)} if $f_0$ is given by \eqref{eg_f_logistic} with even $p$,
$\kp\ge\lambda p$, and \eqref{cond_initial_BC} is satisfied with $\beta=1$,
the solution to the ETD1 scheme \eqref{model_ETD1} or the ETDRK2 scheme \eqref{model_ETDRK2}
satisfies $\|v^n\|\le1$ for any $n\ge0$;

{\rm(ii)} if $f_0$ is given by \eqref{eg_f_flory},
$\kp\ge\frac{\theta}{1-\rho^2}-\theta_c$,
and \eqref{cond_initial_BC} is satisfied with $\beta=\rho$,
the solution to the ETD1 scheme \eqref{model_ETD1} or the ETDRK2 scheme \eqref{model_ETDRK2}
satisfies $\|v^n\|\le\rho$ for any $n\ge0$,
where $\rho$ is the positive root of \eqref{eg_f_flory_gamma}.
\end{corollary}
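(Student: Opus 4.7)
The plan is to observe that this is a direct application of Theorems \ref{thm_ETD1} and \ref{thm_ETDRK2}: the only work is to check their hypotheses — Assumptions \ref{assump_L} and \ref{assump_f}, the stabilizer bound \eqref{kappacon}, and the initial/boundary bound \eqref{cond_initial_BC} — for each combination of linear operator and nonlinearity listed. The verification of Assumption \ref{assump_L} for every listed operator has already been done earlier: in Examples \ref{eg_linear_elliptic}--\ref{eg_linear_fraclaplace} for the space-continuous operators \eqref{eg_L_elliptic}, \eqref{eg_L_nonlocal_var} and \eqref{eg_L_fracLaplace}, and in Examples \ref{eg_laplacian_dis}--\ref{eg_laplacian_fem} (invoking Proposition \ref{prop_distlinear}) for the discrete operators \eqref{eg_local_diff}, \eqref{eg_L_nonlocal_discrete}, \eqref{eg_L_frac_discrete} and \eqref{eg_fem_L}. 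So I would simply cite those and move on to the two nonlinear functions.

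For case (i), with $f_0(s)=\lambda s(1-s^p)$ and even $p$, a direct computation gives $f_0(1)=f_0(-1)=0$, so \eqref{lincon} holds with $\beta=1$ and Assumption \ref{assump_f} is met. Since $f_0'(s)=\lambda\bigl(1-(p+1)s^p\bigr)$, for $|s|\le 1$ the quantity $|f_0'(s)|$ is bounded above by $\lambda|(p+1)s^p-1|\le\lambda p$, the maximum being attained at $|s|=1$; hence $\kp\ge\lambda p$ is exactly what \eqref{kappacon} demands.

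For case (ii), with $f_0(s)=\tfrac{\theta}{2}\ln\tfrac{1-s}{1+s}+\theta_c s$, the function is odd and by the defining relation \eqref{eg_f_flory_gamma} of $\rho$ one has $f_0(\pm\rho)=0$, so \eqref{lincon} holds with $\beta=\rho$. Differentiating gives $f_0'(s)=-\theta/(1-s^2)+\theta_c$, an even function whose range over $[-\rho,\rho]$ is $\bigl[\theta_c-\theta/(1-\rho^2),\,\theta_c-\theta\bigr]$ (the upper endpoint attained at $s=0$ and the lower at $|s|=\rho$). The upper endpoint is positive while the lower is nonpositive (since $f_0'(\rho)\le 0$ follows from $f_0(0)=f_0(\rho)=0$ with $f_0>0$ on $(0,\rho)$), so $\max_{|s|\le\rho}|f_0'(s)|=\max\!\bigl(\theta_c-\theta,\;\theta/(1-\rho^2)-\theta_c\bigr)$. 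I then need the inequality $\theta_c-\theta\le\theta/(1-\rho^2)-\theta_c$, equivalently $\rho^2\ge 2(\theta_c-\theta)/(2\theta_c-\theta)$; this is precisely the lower bound for $\rho$ recorded in Example \ref{eg_nonlinear_flory}, namely $\rho>\sqrt{1-\theta/(2\theta_c-\theta)}$. Hence $\kp\ge\theta/(1-\rho^2)-\theta_c$ suffices for \eqref{kappacon}.

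With Assumptions \ref{assump_L} and \ref{assump_f} and \eqref{kappacon} in hand, and \eqref{cond_initial_BC} assumed, both Theorems \ref{thm_ETD1} and \ref{thm_ETDRK2} deliver $\|v^n\|\le\beta$ for all $n\ge 0$, with $\beta=1$ in (i) and $\beta=\rho$ in (ii). There is no genuine obstacle in the proof; the only point that is not purely mechanical is the comparison in case (ii) showing that $\theta/(1-\rho^2)-\theta_c$ dominates $\theta_c-\theta$, and this reduces to the explicit lower bound on $\rho$ already established.
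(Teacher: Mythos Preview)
Your proposal is correct and follows exactly the approach the paper intends: the corollary is stated without proof as a direct application of Theorems \ref{thm_ETD1} and \ref{thm_ETDRK2}, with the verification of Assumption \ref{assump_L} delegated to Examples \ref{eg_linear_elliptic}--\ref{eg_laplacian_fem} and the verification of Assumption \ref{assump_f} together with the value of $\max_{|\xi|\le\beta}|f_0'(\xi)|$ already recorded in Examples \ref{eg_nonlinear_logistic} and \ref{eg_nonlinear_flory}. Your extra step of re-deriving those $\kappa$ bounds (and in particular the comparison $\theta_c-\theta\le\theta/(1-\rho^2)-\theta_c$ via the lower bound on $\rho$) merely spells out what the paper left implicit.
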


In summary, we have proved that the first- and second-order ETDRK schemes
preserve the MBP of the underlying problem \eqref{model_eq} unconditionally in the discrete sense.
Meanwhile, we actually show that
\emph{the classic ETDRK approximations, as defined in \cite{CoMa02}, with order greater than two
fail to maintain the  MBP unconditionally}
due to the lack of the property \eqref{interpolation_condition} for the interpolation polynomials with $r\ge2$.
Apart from the ETDRK schemes,
multistep approximations could be also utilized in the ETD temporal discretization \cite{HoOs10}.
The ETD-multistep scheme of order $r+1$ is generated from
the extrapolation polynomial of degree $r\ge1$ for the nonlinear term
based on the approximated solutions at the previous $r$ time levels.
Since the extrapolation polynomials with $r\ge1$ cannot be bounded
by the maximums and minimums of the extrapolated data,
\emph{the ETD-multistep schemes with even order greater than one also fail to preserve the discrete MBP unconditionally}.

Up till now, we only presented the differential forms of the ETD1 and ETDRK2 schemes
by \eqref{model_ETD1} and \eqref{model_ETDRK2}
since they are convenient for the theoretical analysis.
In practice,
we need  formulas that can be more directly implemented for computations,
in particular, the fully discrete schemes (in both time and space)
with $X\simeq\R^{M^d}$  and $\hL=\hL_h$ discussed in Section \ref{sect_finitedim}.
Here we define a new operator $\hL_{hc}:\partial X\to X$ as follows:
for any $g\in\partial X$ and $\widehat{w}\in D(\hL)$ with $\widehat{w}|_{\Omega_c^*}=g$, let
\begin{equation}
\label{def_boundaryopt}
\hL_{hc}g = (\hL_h \widehat{w})|_{\Omega^*} - \hL_{h0} (\widehat{w}|_{\Omega^*}).
\end{equation}
It is easy to check that the right-hand side of \eqref{def_boundaryopt} depends only on $g$,
so $ \hL_{hc}$ is well-defined through \eqref{def_boundaryopt}.

For a given boundary data $g:[0,T]\to\partial X$ and a function $u:[0,T]\to \hX$ such that $u(t)|_{\Omega_c^*}=g(t)$,
we obtain from \eqref{model_ODE_solution2} and the definition of $\hL_{hc}$ that
\begin{equation}
\label{model_ODE_disspace}
\begin{dcases}
u_t + \kp u = \hL_{h0} u + \hN[u] + \hL_{hc}g, & t\in(0,T], \ \bx\in\Omega^*, \\
u(0,\bx) = u_0(\bx), & \bx\in\widehat{\Omega},
\end{dcases}
\end{equation}
where $\hL_{h0}$ is an $M^d$-by-$M^d$ matrix.
Let
\begin{equation}
\label{def_Lkp}
\hL_{\kp,h} = \kp I_{M^d} - \hL_{h0}
\end{equation}
and define the $\varphi$-functions as follows:
\begin{equation*}
\varphi_0(a)=\e^{-a},\quad
\varphi_1(a)=\frac{1-\e^{-a}}{a},\quad
\varphi_2(a)=\frac{a-1+\e^{-a}}{a^2},\qquad a\not=0.
\end{equation*}
Let $v^0=u_0$.
Then, we give the fully discrete ETD1 scheme by
\begin{equation}
\label{fullydis_ETD1}
v^{n+1}
= \e^{-\dt\hL_{\kp,h}} v^n + \int_0^\dt \e^{-(\dt-s)\hL_{\kp,h}} \big\{\hN[v^n]+\hL_{hc}g(t_n+s)\big\}\,\d s
\end{equation}
or equivalently,
\begin{equation*}
v^{n+1}
= \varphi_0(\dt\hL_{\kp,h} ) v^n + \dt \varphi_1(\dt\hL_{\kp,h} )\hN[v^n]
+ \int_0^\dt \e^{-(\dt-s)\hL_{\kp,h}} \hL_{hc} g(t_n+s)\,\d s,
\end{equation*}
and the fully discrete ETDRK2 scheme by
\begin{equation*}
\begin{dcases}
\tilde{v}^{n+1} = \e^{-\dt\hL_{\kp,h}} v^n + \int_0^\dt \e^{-(\dt-s)\hL_{\kp,h}} \big\{\hN[v^n]+\hL_{hc}g(t_n+s)\big\}\,\d s,\\
v^{n+1} = \e^{-\dt\hL_{\kp,h}} v^n + \int_0^\dt \e^{-(\dt-s)\hL_{\kp,h}}
\Big\{\Big(1-\frac{s}{\dt}\Big)\hN[v^n] +\frac{s}{\dt}\hN[\tilde{v}^{n+1}]+\hL_{hc}g(t_n+s)\Big\}\,\d s
\end{dcases}
\end{equation*}
or equivalently,
\begin{equation}
\label{fullydis_ETDRK2_phi}
\begin{dcases}
\tilde{v}^{n+1}
=\varphi_0(\dt\hL_{\kp,h} )v^n+\dt \varphi_1(\dt\hL_{\kp,h} )\hN[v^n]
+ \int_0^\dt \e^{-(\dt-s)\hL_{\kp,h}} \hL_{hc} g(t_n+s)\,\d s,\\
v^{n+1}
=\tilde{v}^{n+1}+\dt\varphi_2(\dt\hL_{\kp,h} )
\big\{\hN[\tilde{v}^{n+1}]-\hN[v^n]\big\}.
\end{dcases}
\end{equation}
Note that the corresponding semigroup is given by the matrix exponential $S_{-\hL_{\kp,h}}(t)=\e^{t(\hL_{h0}-\kp I)}$,
which again depends crucially on the choice of the constant $\kp$.

\subsection{Convergence analysis of the ETD schemes}

As an important application of the MBP,
we now consider the convergence of the ETD1 and ETDRK2 schemes.
From the practical point of view, we are mainly interested in the convergence
of the fully discrete solution to the semi-discrete (in space) solution (with a fixed spatial mesh size)
as the time step size goes to zero.
Again, we have $\hL=\hL_h$ and $S_{\hL_{h0}}(t)=\e^{t\hL_{h0}}$
as we discussed in Section \ref{sect_finitedim}.
First, let us state the result for the ETD1 scheme \eqref{model_ETD1}.

\begin{theorem}
Suppose that Assumptions \ref{assump_L} and \ref{assump_f}, \eqref{kappacon} and \eqref{cond_initial_BC} are satisfied.
For the  fixed terminal time $T>0$ and spatial mesh size $h>0$,
assume that the exact solution $u$ to the space-discrete model equation \eqref{model_ODE_disspace} belongs to $C^1([0,T];\hX)$
and $\{v^n\}_{n\ge0}$ is generated by the fully discrete ETD1 scheme \eqref{fullydis_ETD1} with $v^0=u_0$.
Then we have
\begin{equation}
\label{ETD1error}
\|v^n-u(t_n)\|\le C\e^{\kp t_n}\dt,\quad\forall\,t_n\le T
\end{equation}
for any $\dt>0$,
where the constant $C>0$ depends on the $C^1([0,T];\hX)$ norm of $u$,
but independent of $\dt$ and $\kp$.
\end{theorem}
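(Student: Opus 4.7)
The plan is a standard convergence analysis for exponential integrators, with the MBPs proved above playing the key role of giving us uniform-in-$n$ Lipschitz control of $\hN$. First I would write the exact solution of \eqref{model_ODE_disspace} on $[t_n,t_{n+1}]$ via the variation-of-constants formula associated with the generator $-\hL_{\kp,h}=\hL_{h0}-\kp I_{M^d}$, obtaining
\[
u(t_{n+1}) = \e^{-\dt\hL_{\kp,h}} u(t_n) + \int_0^\dt \e^{-(\dt-s)\hL_{\kp,h}} \big\{\hN[u(t_n+s)]+\hL_{hc}g(t_n+s)\big\}\,\d s.
\]
Subtracting \eqref{fullydis_ETD1} and setting $e^n=v^n-u(t_n)$ (so $e^0=0$) yields
\[
e^{n+1} = \e^{-\dt\hL_{\kp,h}} e^n + \int_0^\dt \e^{-(\dt-s)\hL_{\kp,h}}\big\{\hN[v^n]-\hN[u(t_n+s)]\big\}\,\d s,
\]
which kills the boundary term $\hL_{hc}g$ exactly and isolates the error source in the nonlinearity.

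Next I would split the nonlinear difference as $\hN[v^n]-\hN[u(t_n+s)] = (\hN[v^n]-\hN[u(t_n)]) + (\hN[u(t_n)]-\hN[u(t_n+s)])$. Since $-\hL_{\kp,h}=\hL_{h0}-\kp I_{M^d}$ and $\hL_{h0}$ generates a contraction semigroup by Lemma~\ref{lem_L_semigroup}, we have the crucial bound $\opnorm{\e^{-t\hL_{\kp,h}}}\le \e^{-\kp t}$ for all $t\ge 0$. By Theorem~\ref{thm_ETD1} we have $\|v^n\|\le\beta$, and by Theorem~\ref{thm_model_max} applied to the space-discrete system we have $\|u(t_n+s)\|\le\beta$, so Lemma~\ref{lem_nonlinear}(ii) gives
\[
\|\hN[v^n]-\hN[u(t_n)]\|\le 2\kp\|e^n\|, \qquad \|\hN[u(t_n)]-\hN[u(t_n+s)]\|\le 2\kp\, s\,\|u_t\|_{C([0,T];\hX)}.
\]
Plugging into the error recursion and evaluating $\int_0^\dt \e^{-\kp(\dt-s)}\,\d s=(1-\e^{-\kp\dt})/\kp$ and $\int_0^\dt s\,\e^{-\kp(\dt-s)}\,\d s\le\dt(1-\e^{-\kp\dt})/\kp$, I obtain
\[
\|e^{n+1}\|\le \e^{-\kp\dt}\|e^n\| + 2(1-\e^{-\kp\dt})\|e^n\| + 2\dt(1-\e^{-\kp\dt})\|u_t\|_{C([0,T];\hX)}.
\]

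The last step is the discrete Gronwall bookkeeping. The coefficient simplifies to $2-\e^{-\kp\dt}\le 1+\kp\dt\le \e^{\kp\dt}$, while the forcing term is bounded by $2\kp\dt^2\|u_t\|_{C([0,T];\hX)}$, which is the expected $O(\dt^2)$ local truncation error of a first-order scheme. Iterating from $e^0=0$ yields
\[
\|e^n\|\le 2\kp\dt^2\|u_t\|_{C([0,T];\hX)}\sum_{k=0}^{n-1}\e^{\kp k\dt}
\le 2\dt\|u_t\|_{C([0,T];\hX)}\bigl(\e^{\kp t_n}-1\bigr),
\]
which is the claimed estimate \eqref{ETD1error} with $C=2\|u_t\|_{C([0,T];\hX)}$. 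The main subtlety—really the reason the stabilization constant $\kp$ was chosen to satisfy \eqref{kappacon} in the first place—is matching the Lipschitz constant $2\kp$ of $\hN$ against the semigroup decay $\e^{-\kp\dt}$: this is exactly what collapses the growth factor to $\e^{\kp\dt}$ rather than something like $\e^{2\kp\dt}$, and without the MBP-based a priori bound $\|v^n\|\le\beta$ we would not be entitled to use the restricted Lipschitz constant from Lemma~\ref{lem_nonlinear}(ii) in the first place.
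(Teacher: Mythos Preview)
Your proof is correct and follows essentially the same approach as the paper's own proof: both write the exact and numerical solutions via the variation-of-constants formula for $-\hL_{\kp,h}$, split $\hN[v^n]-\hN[u(t_n+s)]$ into a propagated-error piece and a truncation piece, invoke the MBP together with Lemma~\ref{lem_nonlinear}(ii) to get the $2\kp$ Lipschitz bound, and finish with a discrete Gronwall argument that collapses $\e^{-\kp\dt}+2(1-\e^{-\kp\dt})=2-\e^{-\kp\dt}\le 1+\kp\dt$. The only cosmetic differences are that you keep the sharper $s$-dependent truncation bound $2\kp s\|u_t\|$ (the paper immediately replaces $s$ by $\dt$) and you iterate using $\e^{\kp\dt}$ where the paper uses $(1+\kp\dt)$ directly; neither changes the structure of the argument.
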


\begin{proof}
We know that the ETD1 solution is given by $v^{n+1}=w^n(\dt)$ with the function $w^n:[0,\dt]\to \hX$ solving
\begin{equation}
\label{model_ETD1_eq}
\begin{dcases}
w^n_s + \kp w^n = \hL_{h0} w^n + \hN[v^n] + \hL_{hc} g(t_n+s), & s\in(0,\dt], \ \bx\in\Omega^*,\\
w^n(0,\bx) = v^n(\bx), &  \bx\in\widehat{\Omega}.
\end{dcases}
\end{equation}
Let $e_1^n=v^n-u(t_n)$.
The difference between \eqref{model_ETD1_eq} and \eqref{model_ODE_disspace} yields
\begin{equation}
\label{ETD1error_eq}
e_1^{n+1}=\e^{-\kp\dt}\e^{\dt\hL_{h0}}e_1^n
+\int_0^\dt\e^{-\kp(\dt-s)}\e^{(\dt-s)\hL_{h0}}\{\hN[v^n]-\hN[u(t_n)]+R_1(s)\}\,\d s,
\end{equation}
where $R_1(s)$ is the truncated error as
$$R_1(s)=\hN[u(t_n)]-\hN[u(t_n+s)],\quad s\in[0,\dt].$$
Due to the MBP of \eqref{model_eq}, we have $\|u(t_n)\|\le\beta$ and $\|u(t_n+s)\|\le\beta$,
and then, using Lemma \ref{lem_nonlinear}-(ii), we derive
$$\|R_1(s)\| = \|N_0(u(t_n))-N_0(u(t_n+s))\|\le2\kp\|u(t_n)-u(t_n+s)\|\le C_1\kp\dt,\quad\forall\,s\in[0,\dt],$$
where the constant $C_1$ depends on the $C^1([0,T];\hX)$ norm of $u$, but independent of $\dt$ and $\kp$.
Similarly, since $\|v^n\|\le\beta$, we can obtain
\begin{equation}
\label{ETD1error_pf1}
\|\hN[v^n]-\hN[u(t_n)]\|\le2\kp\|v^n-u(t_n)\|=2\kp\|e_1^n\|.
\end{equation}
Then, we derive from \eqref{ETD1error_eq} that
\begin{align}
\|e_1^{n+1}\|
& \le\e^{-\kp\dt}\opnorm{\e^{\dt\hL_{h0}}}\|e_1^n\|\nonumber\\
& \quad +\int_0^\dt\e^{-\kp(\dt-s)}\opnorm{\e^{(\dt-s)\hL_{h0}}}\big\{\|\hN[v^n]-\hN[u(t_n)]\|+\|R_1(s)\|\big\}\,\d s\nonumber\\
& \le\e^{-\kp\dt}\|e_1^n\|+(2\kp\|e_1^n\|+C_1\kp\dt)\int_0^\dt\e^{-\kp(\dt-s)}\,\d s\nonumber\\
& =\e^{-\kp\dt}\|e_1^n\|+\frac{1-\e^{-\kp\dt}}{\kp}(2\kp\|e_1^n\|+C_1\kp\dt)\nonumber\\
& =(2-\e^{\kp\dt})\|e_1^n\|+\frac{1-\e^{-\kp\dt}}{\kp\dt}\cdot C_1\kp\dt^2\nonumber\\
& \le(1+\kp\dt)\|e_1^n\|+C_1\kp\dt^2,\label{ETD1error_pf2}
\end{align}
where in the last step we have used the fact that $1-\e^{-a}\le a$ for any $a>0$.
By induction, we have
\begin{align*}
\|e_1^n\|
& \le(1+\kp\dt)^n\|e_1^0\|+C_1\kp\dt^2\sum_{k=0}^{n-1}(1+\kp\dt)^k\\
& =(1+\kp\dt)^n\|e_1^0\|+C_1\dt[(1+\kp\tau)^n-1]\\
& \le\e^{\kp n\dt}\|e_1^0\|+C_1\e^{\kp n\dt}\dt.
\end{align*}
By letting $C=C_1$, we obtain \eqref{ETD1error} since $e_1^0=0$ and $n\dt=t_n$.
\end{proof}

\begin{remark}
We note that in the estimate \eqref{ETD1error},
there is an exponential coefficient $\e^{\kp t_n}$
which could be very large
(e.g., in the case of Allen--Cahn equation, $f$ may include a negative power of a small parameter,
which leads to large $\kp$).
The similar result is also obtained for the ETDRK2 scheme as shown later.
Theoretically, this is inevitable due to the application of the Gronwall's lemma.
On the other hand, one may be able to further improve the error estimate by using the techniques in \cite{FePr03,FePr04}.
\end{remark}

Now, we turn to the convergence of the ETDRK2 scheme \eqref{model_ETDRK2}.
Assume that $f_0$ is twice continuously differentiable and denote by $M^f_2$ the maximum of $|f_0''|$ on $[-\beta,\beta]$.

\begin{theorem}
Suppose that Assumptions \ref{assump_L} and \ref{assump_f}, \eqref{kappacon} and \eqref{cond_initial_BC} are satisfied.
For the fixed terminal time $T>0$ and spatial mesh size $h>0$,
assume that the exact solution $u$ to the space-discrete model equation \eqref{model_ODE_disspace} belongs to $C^2([0,T];\hX)$
and $\{v^n\}_{n\ge0}$ is generated by the fully discrete ETDRK2 scheme \eqref{fullydis_ETDRK2_phi} with $v^0=u_0$.
Then we have
\begin{equation}
\label{ETDRK2error}
\|v^n-u(t_n)\|\le C_\kp\e^{\kp t_n}\dt^2,\quad\forall\,t_n\le T
\end{equation}
for any $\dt>0$,
where the constant $C_\kp>0$ depends on $\kp$, $M^f_2$, and the $C^2([0,T];\hX)$ norm of $u$,
but independent of $\dt$.
\end{theorem}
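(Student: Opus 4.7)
The plan is to mirror the proof of the ETD1 convergence estimate, adding one extra layer to account for the linear interpolation of the nonlinearity. First, I would write the exact semi-discrete solution on $[t_n,t_{n+1}]$ via the variation-of-constants formula
$$
u(t_{n+1}) = \e^{-\dt\hL_{\kp,h}}u(t_n) + \int_0^\dt \e^{-(\dt-s)\hL_{\kp,h}}\big\{\hN[u(t_n+s)] + \hL_{hc}g(t_n+s)\big\}\,\d s,
$$
and combine it with the integral form of \eqref{fullydis_ETDRK2_phi}. Setting $e_2^n=v^n-u(t_n)$, this gives the one-step error identity
$$
e_2^{n+1} = \e^{-\dt\hL_{\kp,h}}e_2^n + \int_0^\dt \e^{-(\dt-s)\hL_{\kp,h}}\Big\{\Big(1-\tfrac{s}{\dt}\Big)\hN[v^n] + \tfrac{s}{\dt}\hN[\tilde v^{n+1}] - \hN[u(t_n+s)]\Big\}\,\d s.
$$

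Next I would split the integrand by adding and subtracting the interpolant evaluated at the exact solution, producing the consistency residual
$$
R_2(s) := \Big(1-\tfrac{s}{\dt}\Big)\hN[u(t_n)] + \tfrac{s}{\dt}\hN[u(t_{n+1})] - \hN[u(t_n+s)]
$$
plus the linearized error $(1-s/\dt)(\hN[v^n]-\hN[u(t_n)]) + (s/\dt)(\hN[\tilde v^{n+1}]-\hN[u(t_{n+1})])$. For the linearized part, Theorems \ref{thm_model_max}, \ref{thm_ETD1} and \ref{thm_ETDRK2} ensure that $v^n$, $\tilde v^{n+1}$, $u(t_n)$, $u(t_{n+1})$ all lie in $[-\beta,\beta]$ pointwise, so Lemma \ref{lem_nonlinear}-(ii) yields the Lipschitz bounds $\|\hN[v^n]-\hN[u(t_n)]\|\le 2\kp\|e_2^n\|$ and $\|\hN[\tilde v^{n+1}]-\hN[u(t_{n+1})]\|\le 2\kp\|\tilde v^{n+1}-u(t_{n+1})\|$. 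The latter is handled by noting that $\tilde v^{n+1}$ is exactly one ETD1 step from $v^n$, so repeating the one-step estimate \eqref{ETD1error_pf2} produces $\|\tilde v^{n+1}-u(t_{n+1})\|\le(1+\kp\dt)\|e_2^n\|+C_1\kp\dt^2$.

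For $R_2(s)$, I would use that $\hN[u(t_n+s)](\bx)=N_0(u(t_n+s,\bx))$ together with $f_0\in C^2$, $u\in C^2([0,T];\hX)$, and the MBP-induced constraint $|u(t_n+s,\bx)|\le\beta$ to differentiate twice in $s$. This yields $\partial_s^2\{N_0(u(t_n+s,\bx))\} = N_0''(u)u_t^2 + N_0'(u)u_{tt}$, bounded uniformly in terms of $M_2^f$, $\kp$, and $\|u\|_{C^2([0,T];\hX)}$, so the standard linear-interpolation error formula gives $\|R_2(s)\|\le C_1\dt^2$ uniformly on $[0,\dt]$. Combining the three contributions with $\opnorm{\e^{-\dt\hL_{\kp,h}}}\le\e^{-\kp\dt}$ from Lemma \ref{lem_L_semigroup}-(ii) and $\int_0^\dt \e^{-\kp(\dt-s)}\,\d s = (1-\e^{-\kp\dt})/\kp \le \dt$ produces a recurrence of the form $\|e_2^{n+1}\|\le(1+A_\kp\dt)\|e_2^n\| + B_\kp\dt^3$, from which the discrete Gronwall inequality together with $e_2^0=0$ delivers \eqref{ETDRK2error}.

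The main obstacle is the quantitative analysis of $R_2$: although the $O(\dt^2)$ bound on the remainder of a linear interpolation at the endpoints is classical, verifying it here requires that the interpolated function $N_0\circ u(t_n+\cdot)$ has a uniformly bounded second derivative in $s$. This is precisely where the $C^2$ regularity hypothesis on $u$ and $f_0$ is used, and where the explicit $\kp$-dependence of the final constant $C_\kp$ enters through the bound $\|N_0'\|_{L^\infty([-\beta,\beta])}\le 2\kp$. All other pieces are direct analogues of the ETD1 argument once the MBPs of Theorems \ref{thm_ETD1} and \ref{thm_ETDRK2} are invoked to keep the arguments of $N_0$ inside $[-\beta,\beta]$ throughout the estimate.
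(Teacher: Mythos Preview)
Your proposal is correct and follows essentially the same route as the paper's proof: the same one-step error identity, the same splitting into the consistency residual $R_2$ and the linearized error, the same use of the ETD1 one-step bound \eqref{ETD1error_pf2} for $\tilde v^{n+1}-u(t_{n+1})$, and the same Gronwall finish. The only minor refinement in the paper is that the recurrence coefficient is computed more sharply as $1+\kp\dt+\tfrac{(\kp\dt)^2}{2}$ (rather than $1+A_\kp\dt$), which is what allows the estimate to hold for \emph{all} $\dt>0$ via $1+\kp\dt+\tfrac{(\kp\dt)^2}{2}\le\e^{\kp\dt}$; you should keep that quadratic term when you carry out the integration in detail.
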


\begin{proof}
The proof strategy is quite similar to the case of the first-order scheme.
Let $e_2^n=v^n-u(t_n)$, then we have
\begin{align}
e_2^{n+1} & =\e^{-\kp\dt}\e^{\dt\hL_{h0}}e_2^n+\int_0^\dt\e^{-\kp(\dt-s)}\e^{(\dt-s)\hL_{h0}}
\Big\{\Big(1-\frac{s}{\dt}\Big)(\hN[v^n]-\hN[u(t_n)])\nonumber\\
& \quad +\frac{s}{\dt}(\hN[\tilde{v}^{n+1}]-\hN[u(t_{n+1})])+R_2(s)\Big\}\,\d s,\label{ETDRK2error_eq}
\end{align}
where $R_2(s)$ is the truncated error given by
\begin{equation*}
R_2(s)=\Big(1-\frac{s}{\dt}\Big)\hN[u(t_n)]+\frac{s}{\dt}\hN[u(t_{n+1})]-\hN[u(t_n+s)],
\quad s\in[0,\dt].
\end{equation*}
Using the MBP and the error estimates of the linear interpolation, we have
$$\|R_2(s)\|\le(C_2\kp+C_3)\dt^2,\quad\forall\,s\in[0,\dt],$$
where the constant $C_2$ depends on the $C^2([0,T];\hX)$ norm of $u$ and
$C_3$ depends on $M^f_2$ and the $C^2([0,T];\hX)$ norm of $u$;
both of them are independent of $\dt$ and $\kp$.
From the last inequality in \eqref{ETD1error_pf2}, we know
$$\|\tilde{v}^{n+1}-u(t_{n+1})\|\le(1+\kp\dt)\|v^n-u(t_n)\|+C_1\kp\dt^2,$$
and then, using Lemma \ref{lem_nonlinear}-(ii), we obtain
$$\|\hN[\tilde{v}^{n+1}]-\hN[u(t_{n+1})]\|
\le 2\kp\|\tilde{v}^{n+1}-u(t_{n+1})\|
\le 2\kp(1+\kp\dt)\|e_2^n\|+2C_1\kp^2\dt^2.$$
By combining with \eqref{ETD1error_pf1}, we have, for any $s\in[0,\dt]$,
\begin{align*}
& \Big\|\Big(1-\frac{s}{\dt}\Big)(\hN[v^n]-\hN[u(t_n)])+\frac{s}{\dt}(\hN[\tilde{v}^{n+1}]-\hN[u(t_{n+1})])\Big\|\\
& \qquad \le\Big(1-\frac{s}{\dt}\Big)2\kp\|e_2^n\|+\frac{s}{\dt}[2\kp(1+\kp\dt)\|e_2^n\|+2C_1\kp^2\dt^2]\\
& \qquad =2\kp(1+\kp s)\|e_2^n\|+2C_1\kp^2\dt s.
\end{align*}
Then, we obtain from \eqref{ETDRK2error_eq} that
\begin{align*}
\|e_2^{n+1}\|
& \le\e^{-\kp\dt}\|e_2^n\|
 +\int_0^\dt\e^{-\kp(\dt-s)}[2\kp(1+\kp s)\|e_2^n\|+2C_1\kp^2\dt s+(C_2\kp+C_3)\dt^2]\,\d s\\
& =\e^{-\kp\dt}\|e_2^n\|+[2\kp\|e_2^n\|+(C_2\kp+C_3)\dt^2]\int_0^\dt\e^{-\kp(\dt-s)}\,\d s\\
& \quad +(2\kp^2\|e_2^n\|+2C_1\kp^2\dt)\int_0^\dt s\e^{-\kp(\dt-s)}\,\d s\\
& =\e^{-\kp\dt}\|e_2^n\|+\frac{1-\e^{-\kp\dt}}{\kp}[2\kp\|e_2^n\|+(C_2\kp+C_3)\dt^2]\\
& \quad +\frac{\e^{-\kp\dt}-1+\kp\dt}{\kp^2}(2\kp^2\|e_2^n\|+2C_1\kp^2\dt)\\
& =(\e^{-\kp\dt}+2\kp\dt)\|e_2^n\|+\frac{1-\e^{-\kp\dt}}{\kp}(C_2\kp+C_3)\dt^2
+\frac{\e^{-\kp\dt}-1+\kp\dt}{\kp^2}\cdot 2C_1\kp^2\dt\\
& \le\Big(1+\kp\dt+\frac{(\kp\dt)^2}{2}\Big)\|e_2^n\|+(C_2\kp+C_3+C_1\kp^2)\dt^3,
\end{align*}
where we have used the inequality $1-a\le\e^{-a}\le1-a+\frac{a^2}{2}$ for any $a>0$.
Finally, by induction, we obtain
\begin{align*}
\|e_2^n\|
& \le\Big(1+\kp\dt+\frac{(\kp\dt)^2}{2}\Big)^n\|e_2^0\|
+(C_1\kp^2+C_2\kp+C_3)\dt^3\sum_{k=0}^{n-1}\Big(1+\kp\dt+\frac{(\kp\dt)^2}{2}\Big)^k\\
& \le\Big(1+\kp\dt+\frac{(\kp\dt)^2}{2}\Big)^n\|e_2^0\|
+\Big(C_1\kp+C_2+\frac{C_3}{\kp}\Big)\dt^2\Big[\Big(1+\kp\dt+\frac{(\kp\dt)^2}{2}\Big)^n-1\Big]\\
& \le\e^{\kp n\dt}\|e_2^0\|+\Big(C_1\kp+C_2+\frac{C_3}{\kp}\Big)\e^{\kp n\dt}\dt^2.
\end{align*}
By letting $C_\kp=C_1\kp+C_2+C_3/\kp$, we obtain \eqref{ETDRK2error}.
\end{proof}

By specifying the linear operator by any one of Examples \ref{eg_laplacian_dis}--\ref{eg_laplacian_fem}
and the nonlinear one by either Example \ref{eg_nonlinear_logistic}, \ref{eg_nonlinear_flory}, or \ref{pr},
one can obtain the convergence results of corresponding concrete problems.

\subsection{Energy stability of the ETD schemes for gradient flow models}

Next we show the application of the MBP and convergence of the ETD schemes
to gradient flow models, a class of important examples of the model equation \eqref{model_eq}.
We only consider  the periodic or {\em time-independent} Dirichlet boundary condition
and also assume that the linear operator $\hL$ is dissipative on $L^2(\Omega)$ in the sense that
the inner product $(w,\hL w)_{L^2(\Omega)}\le0$ for any $w\in D(\hL)$,
which can be satisfied by a large number of operators
such as those given in Examples \ref{eg_linear_elliptic}--\ref{eg_laplacian_fem}.

Phase-field (or diffuse-interface) models are usually derived as the gradient flows with respect to the energy functional
\begin{equation*}
E[u]=-\frac{1}{2}(u,\hL u)_{L^2(\Omega)}+\int_\Omega F(u(\bx))\,\d\bx
\end{equation*}
with $F:\R\to\R$ such that $f_0=-F'$.
Some simple calculations give us the energy law under the periodic or time-independent Dirichlet boundary condition:
\begin{equation*}
E[u(t_2)]\le E[u(t_1)],\quad\forall\,t_2\ge t_1\ge0.
\end{equation*}
There have been numerous researches devoted to the energy stable numerical methods for the phase-field models,
see, e.g., \cite{JuLiQiZh18,LiQiTa16,ShYa10,WaWiLo09} and the references in the recent review \cite{DuFe19}.

An interesting problem is whether the energy law can be preserved by the proposed ETD discretization schemes.
In our recent work \cite{DuJuLiQi19},
we investigated the MBP-preserving ETD schemes for the nonlocal Allen--Cahn equation with periodic boundary condition,
namely, the equation \eqref{model_eq} with the linear operator given by \eqref{eg_L_nonlocal_var}
and nonlinear function defined as \eqref{eg_f_quartic}.
We concluded that the solution to the ETD1 scheme decreases the energy along the time steps
and the energy for the ETDRK2 scheme is uniformly bounded by the initial energy plus a constant.

For the more general case we consider in this work,
the ETD1 and ETDRK2 schemes for \eqref{model_eq} still satisfy the energy stability.
Since the proof could be conducted in a quite similar way as done in \cite{DuJuLiQi19},
we state the result directly as follows and leave the details of the proof to interested readers.

\begin{proposition}
Suppose that Assumptions \ref{assump_L}--\ref{assump_f}, \eqref{kappacon} and \eqref{cond_initial_BC} hold.
Then

{\rm(i)} the solution $\{v^n\}_{n\ge0}$ to the ETD1 scheme \eqref{model_ETD1} satisfies
$$E[v^{n+1}]\le E[v^n],\quad \forall\, n\ge0,$$
for any $\dt>0$, i.e., the ETD1 scheme is unconditionally energy stable;

{\rm(ii)}
the solution $\{v^n\}_{n\ge0}$ to the ETDRK2 scheme \eqref{model_ETDRK2} satisfies
$$E[v^n]\le E[v^0]+\widehat{C},\quad \forall\,t_n\le T,$$
for any $\dt\in(0,1]$,
where the constant $\widehat{C}=\widehat{C}(|\Omega|,T,\kp)$ is independent of $\dt$,
i.e., the energy is uniformly bounded.
\end{proposition}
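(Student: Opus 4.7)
My plan is to mimic the strategy from the authors' prior paper \cite{DuJuLiQi19}, carried out here in the abstract framework. The enabling facts are in place: the discrete MBP (Theorems \ref{thm_ETD1} and \ref{thm_ETDRK2}) yields $\|v^n\|\le\beta$ for every iterate, so the stabilization \eqref{kappacon} forces $|F''|\le\kp$ along the sequence; the dissipative $\hL$ is self-adjoint on $L^2(\Omega)$ in the gradient-flow setting (otherwise the variational derivation of $E$ would not close), and periodic or time-independent Dirichlet data eliminates boundary contributions on integration by parts. Consequently, $\hL_{\kp}:=\kp\hI-\hL_0$ is a nonnegative self-adjoint operator, so its spectral calculus is available for the $\varphi$-functions.

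For part (i), write $V=v^{n+1}-v^n$. Self-adjointness of $\hL$ and a second-order Taylor expansion of $F$ combined with the MBP bound $|F''|\le\kp$ give
\begin{equation*}
E[v^{n+1}]-E[v^n] \le \tfrac{1}{2}(V,\hL_{\kp}V)_{L^2}-(V,G)_{L^2},\qquad G:=\hL v^n+f_0(v^n).
\end{equation*}
The ETD1 update yields $V=\dt\,\varphi_1(\dt\hL_{\kp})G$, and using the identity $\dt\hL_{\kp}\varphi_1(\dt\hL_{\kp})=\hI-e^{-\dt\hL_{\kp}}$ the right-hand side reduces to
\begin{equation*}
-\tfrac{\dt}{2}\big(G,\,\varphi_1(\dt\hL_{\kp})(\hI+e^{-\dt\hL_{\kp}})G\big)_{L^2}\le 0,
\end{equation*}
since the three factors in the inner product commute and are nonnegative self-adjoint. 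This yields $E[v^{n+1}]\le E[v^n]$.

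For part (ii), strict monotonicity is generally lost due to the corrector. The plan is a one-step estimate $E[v^{n+1}]\le E[v^n]+C\dt^2$ with $C=C(|\Omega|,\kp,\beta)$, which summed over $n\dt\le T$ gives $E[v^n]\le E[v^0]+\widehat C$ with $\widehat C=\widehat C(|\Omega|,T,\kp)$. Since the predictor $\tilde v^{n+1}$ satisfies $E[\tilde v^{n+1}]\le E[v^n]$ by part (i), it suffices to bound $E[v^{n+1}]-E[\tilde v^{n+1}]$. Setting $W:=v^{n+1}-\tilde v^{n+1}=\dt\,\varphi_2(\dt\hL_{\kp})(\hN[\tilde v^{n+1}]-\hN[v^n])$ and applying the same Taylor-plus-MBP device, the Lipschitz estimate Lemma \ref{lem_nonlinear}-(ii), the first-order bound $\|\tilde v^{n+1}-v^n\|=O(\dt)$ implicit in the ETD1 convergence proof, and the uniform operator bound $\opnorm{\varphi_2(\dt\hL_{\kp})}\le 1/2$ together deliver the $O(\dt^2)$ increment.

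The main obstacle lies in executing part (i) cleanly so that the spectral nonpositivity of $\varphi_1(a)(1+e^{-a})$ emerges transparently after substitution; this hinges on the commutativity and self-adjointness of the $\varphi$-operators, which rest on the symmetry of $\hL$ implicit in the gradient-flow structure. For part (ii), the delicate point is absorbing the cross-term mixing $\hN[v^n]$ and $\hN[\tilde v^{n+1}]$ through $\varphi_2$ into a summable $O(\dt^2)$ remainder without any pointwise sign condition, relying instead on the global supremum-norm bounds furnished by the MBP.
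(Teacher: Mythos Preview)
Your approach coincides with what the paper intends: the authors give no proof here but defer to \cite{DuJuLiQi19}, and you are explicitly reproducing that strategy in the abstract setting. Your treatment of part~(i) is correct and complete; the identity $V=\dt\,\varphi_1(\dt\hL_\kp)G$ together with $\dt\hL_\kp\varphi_1=\hI-\e^{-\dt\hL_\kp}$ does collapse the increment to $-\tfrac{\dt}{2}\big(G,\varphi_1(\hI+\e^{-\dt\hL_\kp})G\big)_{L^2}\le0$, exactly as you write.

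There is, however, a genuine gap in your sketch of part~(ii). You invoke the bound $\|\tilde v^{n+1}-v^n\|=O(\dt)$ ``implicit in the ETD1 convergence proof,'' but that proof assumes $u\in C^1([0,T];\hX)$, a regularity hypothesis \emph{not} present in the proposition, and in any case produces a constant depending on $\|u_t\|$ (hence on $\|\hL v^n\|$), not just on $|\Omega|,T,\kp$. The MBP by itself gives only $\|\tilde v^{n+1}-v^n\|\le 2\beta$; in the space-continuous setting $\hL$ is unbounded, so you cannot extract an extra factor of $\dt$ this way, and in the fully discrete setting the constant would depend on $h$ through $\|\hL_h\|$. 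Consequently your route from $\|W\|=O(\dt)$ alone to $E[v^{n+1}]-E[\tilde v^{n+1}]=O(\dt^2)$ does not close: the cross-term $(W,\hL\tilde v^{n+1})$ that appears in the energy increment is not controlled.

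The fix, following \cite{DuJuLiQi19}, is to avoid isolating $E[v^{n+1}]-E[\tilde v^{n+1}]$ and instead expand $E[v^{n+1}]-E[v^n]$ directly with $D=V+W$, keep the nonpositive part~(i) block intact, and handle the remaining cross-terms by the spectral identities $\dt\hL_\kp\varphi_2=\hI-\varphi_1$ and $\hL_\kp V=(\hI-\varphi_0)G$. These let you trade every appearance of $\hL_\kp$ acting on $v^n$ or $\tilde v^{n+1}$ for bounded $\varphi$-operators acting on $H=\hN[\tilde v^{n+1}]-\hN[v^n]$ and $\hN[v^n]$, both of which are uniformly bounded in $L^\infty$ by $\kp\beta$ via Lemma~\ref{lem_nonlinear}. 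After this substitution the surviving terms are all of size $O(\dt)$ per step with constants depending only on $|\Omega|,\kp,\beta$, and summation over $n\dt\le T$ yields the stated $\widehat C(|\Omega|,T,\kp)$.
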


\section{Some extensions}
\label{sect_extension}

In the framework we have established,
although the function spaces $\hX$ and $X$ consists of only real scalar-valued functions,
the main results on the MBP could be extended to some complex scalar-valued equations,
or more generally, real vector-valued ones.
In addition, the equations on real matrix-valued fields also share some similar characteristics.
The MBP for the case of systems implies
the existence of the invariant regions of the solution \cite{Amann78,Kuiper80,ReWa78,Smoller94}.
One can find studies on the invariant-region-preserving numerical methods
for classic reaction-diffusion systems (see, e.g., \cite{FrMaSgVe19,MaMo84})
and hyperbolic systems of conservation laws (see, e.g., \cite{GuPoSa20,GuPoTo19,JiLi18}).
For simplicity, we focus our analysis on the space-continuous setting (i.e., Case (D1)) in this section.

\subsection{Extension to complex scalar-valued and real vector-valued equations}

The Ginzburg--Landau model for superconductivity \cite{Du94,DuGuPe92}
is one of the popular models describing the phase transition of the superconducting material
under effects of magnetic and electric fields.
For simplicity, we only consider the equation with respect to the order parameter without electric effect,
that is, the electric potential vanishing in the Ginzburg--Landau model.
Let $\phi:[0,T]\times\overline{\Omega}\to\C$ be a complex-valued order parameter satisfying
\begin{equation}
\label{complex_GinzburgLandau}
\phi_t=(\nabla+\mathrm{i}\bm{A})^2\phi+(1-|\phi|^2)\phi,\quad t\in(0,T],\ \bx\in\Omega,
\end{equation}
subject to the initial  condition
\begin{equation*}
\phi(0,\cdot)=\phi_0,\quad\bx\in \overline{\Omega}
\end{equation*}
and either the Dirichlet, periodic, or homogeneous natural boundary condition
\[
(\nabla + \mathrm{i}\bm{A})\phi \cdot \bn = 0,\quad t\in(0,T],\ \bx\in\partial\Omega,
\]
where $\bm{A}\in\R^d$ is a given magnetic potential and $|\phi|$ denotes the modulus of $\phi$.
The MBP of the solution to \eqref{complex_GinzburgLandau} with the natural boundary condition was
proved in \cite{Du94} in the sense of  weak solution as follows.

\begin{proposition}
\label{prop_MBP_GL}
If $|\phi_0(\bx)|\le1$ for a.e. $\bx\in\overline{\Omega}$,
then it holds $|\phi(t,\bx)|\le1$ for a.e. $t\in[0,T]$ and $\bx\in\overline{\Omega}$.
\end{proposition}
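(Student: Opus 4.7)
The plan is to derive a differential inequality for the truncation $w := (|\phi|^2 - 1)^+$ via a gauge-covariant energy test, in the spirit of the standard Ginzburg--Landau argument. Setting $u := |\phi|^2$ and $D_{\bm{A}} := \nabla + \mathrm{i}\bm{A}$, I would first multiply \eqref{complex_GinzburgLandau} by $\bar\phi$ and take real parts. The crucial algebraic identity is
\begin{equation*}
\mathrm{Re}\bigl(\bar\phi\,D_{\bm{A}}^2\phi\bigr) \;=\; \tfrac{1}{2}\Delta u - |D_{\bm{A}}\phi|^2,
\end{equation*}
which one verifies by expanding $D_{\bm{A}}^2\phi = \Delta\phi + \mathrm{i}(\nabla\!\cdot\!\bm{A})\phi + 2\mathrm{i}\bm{A}\!\cdot\!\nabla\phi - |\bm{A}|^2\phi$ and noting that the cross term $2\mathrm{i}\bm{A}\!\cdot\!\bar\phi\nabla\phi$ contributes $-2\bm{A}\!\cdot\!\mathrm{Im}(\bar\phi\nabla\phi)$ to the real part and combines with $-|\bm{A}|^2 u$ to produce exactly $-|D_{\bm{A}}\phi|^2$. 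This yields the pointwise identity
\begin{equation*}
\tfrac{1}{2}u_t \;=\; \tfrac{1}{2}\Delta u - |D_{\bm{A}}\phi|^2 + (1-u)u.
\end{equation*}

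Next I would test against $w\in H^1(\Omega)$ and integrate over $\Omega$. The Laplacian contribution, after integration by parts, equals $-\|\nabla w\|_{L^2(\Omega)}^2$ plus a boundary integral $\int_{\partial\Omega}(\partial u/\partial\bn)\,w\,\d\sigma$. The natural boundary condition $D_{\bm{A}}\phi\!\cdot\!\bn=0$ forces $\partial\phi/\partial\bn = -\mathrm{i}(\bm{A}\!\cdot\!\bn)\phi$, hence $\partial u/\partial\bn = 2\mathrm{Re}(\bar\phi\,\partial\phi/\partial\bn) = 0$, so the boundary integral drops out cleanly. The $|D_{\bm{A}}\phi|^2\,w$ term is manifestly nonnegative, and $(1-u)u \le 0$ pointwise on the set $\{u>1\}$ (where $w>0$), so the reaction integral satisfies $\int_\Omega(1-u)u\,w\,\d\bx\le 0$. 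Assembling these pieces gives
\begin{equation*}
\tfrac{1}{2}\tfrac{d}{dt}\|w(t)\|_{L^2(\Omega)}^2 \;\le\; 0.
\end{equation*}

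Since $|\phi_0|\le 1$ a.e.\ gives $w(0,\cdot)\equiv 0$, the conclusion is immediate: $w(t,\cdot)\equiv 0$ for all $t\in[0,T]$, equivalently $|\phi(t,\bx)|\le 1$ a.e. The Dirichlet and periodic cases go through identically, with the boundary integral vanishing either because $w|_{\partial\Omega}=0$ (under the natural compatibility $|g|\le 1$ on $\partial\Omega$) or by periodic cancellation. The main obstacle is justifying the test-function manipulation rigorously at the weak-solution level: one needs the $H^1$-chain rule $\nabla(u-1)^+ = \chi_{\{u>1\}}\nabla u$, sufficient time regularity to interpret $\langle u_t, w\rangle$ as a duality pairing, and a careful trace argument for the boundary computation when $\bm{A}$ is merely bounded. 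These can be handled either by a Galerkin approximation of $\phi$ or by a standard truncation and mollification of $(u-1)^+$ before passing to the limit.
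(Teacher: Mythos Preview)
Your argument is correct and is essentially the classical energy truncation proof for the Ginzburg--Landau MBP: the gauge-covariant identity $\mathrm{Re}(\bar\phi\,D_{\bm{A}}^2\phi)=\tfrac{1}{2}\Delta|\phi|^2-|D_{\bm{A}}\phi|^2$, testing against $(|\phi|^2-1)^+$, and the sign observations are all right, and your handling of the natural boundary condition via $\partial|\phi|^2/\partial\bn=0$ is the standard trick.

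Note, however, that the paper does \emph{not} supply its own proof of this proposition; it merely quotes the result from \cite{Du94} and moves on. What you have written is, in substance, the argument of \cite{Du94} (which works at the weak-solution level exactly as you indicate, via the $H^1$ chain rule for $(u-1)^+$ and a Galerkin justification). So there is nothing to compare against in the paper itself---you have filled in the cited proof rather than matched an in-paper one. The minor coefficient slip (the Laplacian term contributes $-\tfrac{1}{2}\|\nabla w\|_{L^2}^2$, not $-\|\nabla w\|_{L^2}^2$) is harmless for the inequality.
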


If we let $\psi=\e^{\mathrm{i}\bm{A}\cdot\bx}\phi$,
simple calculations give us
$\Delta\psi=\e^{\mathrm{i}\bm{A}\cdot\bx}(\nabla+\mathrm{i}\bm{A})^2\phi$ and $|\psi|=|\phi|$.
Thus, the equation \eqref{complex_GinzburgLandau} is equivalent to
\begin{equation}
\label{complex_AllenCahn}
\psi_t=\Delta\psi+(1-|\psi|^2)\psi,\quad t\in(0,T],\ \bx\in\Omega,
\end{equation}
subject to the initial  condition $\psi(0,\bx)=\e^{\mathrm{i}\bm{A}\cdot\bx}\phi_0(\bx)$
and corresponding boundary conditions.
Noting that $\psi$ and $\phi$ have the same modulus,
the MBP (Proposition \ref{prop_MBP_GL}) is also valid for \eqref{complex_AllenCahn}.
Since a complex number can be viewed as an element in $\R^2$ in the sense of isomorphism,
the complex-valued equation \eqref{complex_AllenCahn} is actually equivalent to a real vector-valued equation
\begin{equation}
\label{vector_AllenCahn}
\bu_t=\Delta\bu+(1-|\bu|^2)\bu,\quad t\in(0,T],\ \bx\in\Omega
\end{equation}
with $|\cdot|$ denoting the standard Euclidean norm,
where $\bu:[0,T]\times\overline{\Omega}\to\R^m$ ($m=2$ for the Ginzburg--Landau model)
is subject to the initial  condition
\begin{equation*}
\bu(0,\cdot)=\bu_0,\quad\bx\in\overline{\Omega}
\end{equation*}
and either the Dirichlet boundary condition
\begin{equation*}
\bu = \bg, \quad t\in[0,T],\ \bx\in\partial\Omega
\end{equation*}
with $\bg\in C([0,T]\times\partial\Omega;\R^m)$,
the periodic boundary condition, or
the homogeneous Neumann boundary condition
\[
\nabla\bu\cdot\bn=\bm{0},\quad t\in[0,T],\ \bx\in\partial\Omega.
\]

Similar to the Allen--Cahn equation,
the vector-valued equation \eqref{vector_AllenCahn}
could be also regarded as the $L^2$ gradient flow with respect to the energy functional
\begin{equation}
\label{energy_vector}
E[\bu] = \int_\Omega \Big( \frac{1}{2}|\nabla\bu|_F^2 + \frac{1}{4}(|\bu|^2-1)^2\Big) \,\d\bx,
\end{equation}
where $|\nabla\bu|_F$ denotes the Frobenius norm of the Jacobian matrix $\nabla\bu$.
The solution to \eqref{vector_AllenCahn} decreases the energy \eqref{energy_vector} along with the time
under either the time-independent Dirichlet, the homogeneous Neumann, or the periodic boundary condition.

Introducing the stabilizing constant $\kp>0$ as before,
the equation \eqref{vector_AllenCahn} is then equivalent to
\begin{equation*}
\bu_t+\kp\bu=\Delta\bu+\bN_0(\bu),\quad t\in(0,T],\ \bx\in\Omega,
\end{equation*}
where $\bN_0(\bxi)=\kp\bxi+\bm{f}_0(\bxi)$ and
$\bm{f}_0(\bxi)=(1-|\bxi|^2)\bxi$
is the vector-valued analogue of the scalar function \eqref{eg_f_quartic}.
Corresponding to \eqref{kappacon} and Lemma \ref{lem_nonlinear},
we choose $\kp\ge2$ and then have the following lemma on the vector-valued function $\bN_0$.

\begin{lemma}
\label{lem2_nonlinear}
Denote $Y_1=\{\bxi\in\R^m\,|\,|\bxi|\le1\}$. It holds that

{\rm(i)} $|\bN_0(\bxi)|\le\kp$ for any $\bxi\in Y_1$;

{\rm(ii)} $|\bN_0(\bxi_1)-\bN_0(\bxi_2)|\le2\kp|\bxi_1-\bxi_2|$ for any $\bxi_1,\bxi_2\in Y_1$.
\end{lemma}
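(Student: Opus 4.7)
The proof should be a direct computation in two steps, mirroring the structure of Lemma \ref{lem_nonlinear} but adapted to the vector-valued setting. The key preliminary observation is that because $\bm{f}_0(\bxi) = (1-|\bxi|^2)\bxi$ and $\bN_0(\bxi) = \kp\bxi + \bm{f}_0(\bxi)$, one can factor
\[
\bN_0(\bxi) = (\kp + 1 - |\bxi|^2)\,\bxi,
\]
so both the magnitude bound and the Lipschitz estimate reduce to elementary one-variable or linear-algebra facts.

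For (i), set $r = |\bxi|$. Since $r \le 1$ and $\kp \ge 2$, the scalar $\kp + 1 - r^2$ is positive, so $|\bN_0(\bxi)| = (\kp + 1 - r^2)\,r =: h(r)$. I would then study the real function $h(r) = (\kp+1)r - r^3$ on $[0,1]$. Its derivative $h'(r) = (\kp+1) - 3r^2$ is nonnegative whenever $r^2 \le (\kp+1)/3$; because $\kp \ge 2$ gives $(\kp+1)/3 \ge 1$, this holds throughout $[0,1]$, so $h$ is monotone increasing on $[0,1]$ and attains its maximum at $r=1$, with value $h(1) = \kp$. That yields $|\bN_0(\bxi)| \le \kp$.

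For (ii), I would use the convexity of $Y_1$ together with a Jacobian bound. Given $\bxi_1, \bxi_2 \in Y_1$, the segment $\bxi(t) = \bxi_2 + t(\bxi_1 - \bxi_2)$, $t\in[0,1]$, lies in $Y_1$. A direct differentiation of $\bN_0(\bxi) = (\kp + 1 - |\bxi|^2)\bxi$ gives the symmetric Jacobian
\[
D\bN_0(\bxi) = (\kp + 1 - |\bxi|^2)\,I_m - 2\,\bxi\bxi^{T}.
\]
Its eigenvalues are $\kp + 1 - |\bxi|^2$ (on the hyperplane orthogonal to $\bxi$, with multiplicity $m-1$) and $\kp + 1 - 3|\bxi|^2$ (in the direction of $\bxi$). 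For $|\bxi| \le 1$ and $\kp \ge 2$, both lie in $[\kp-2,\,\kp+1]\subset[0,\kp+1]$, so the operator norm of $D\bN_0(\bxi)$ is at most $\kp+1$. Writing
\[
\bN_0(\bxi_1) - \bN_0(\bxi_2) = \int_0^1 D\bN_0(\bxi(t))(\bxi_1 - \bxi_2)\,\d t
\]
and taking norms gives $|\bN_0(\bxi_1) - \bN_0(\bxi_2)| \le (\kp+1)\,|\bxi_1 - \bxi_2|$. Since $\kp \ge 2 \ge 1$ implies $\kp + 1 \le 2\kp$, the claimed bound follows.

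I do not anticipate a substantive obstacle, but one subtle point worth flagging is the choice of estimation technique in (ii): a naive triangle-inequality expansion of $|\bxi_1|^2\bxi_1 - |\bxi_2|^2\bxi_2$ only yields a Lipschitz constant of $\kp+4$, which would require $\kp \ge 4$ and thus be inconsistent with the hypothesis $\kp \ge 2$ that also underlies (i). The Jacobian-based route above is what allows the same threshold $\kp\ge 2$ to serve both parts.
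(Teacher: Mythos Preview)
Your proposal is correct and follows essentially the same approach as the paper: part (i) reduces to the scalar computation $N_0(|\bxi|)\le\kp$ (the paper invokes the earlier scalar lemma, you redo that one-variable calculation directly), and part (ii) proceeds exactly as the paper does by computing the Jacobian $D\bN_0(\bxi)=(\kp+1-|\bxi|^2)I_m-2\,\bxi\bxi^T$, reading off its eigenvalues $\kp+1-|\bxi|^2$ and $\kp+1-3|\bxi|^2$, bounding the spectral norm by $\kp+1<2\kp$, and applying the mean-value theorem. Your closing remark about why the naive triangle-inequality expansion would force $\kp\ge4$ is a nice observation not made explicitly in the paper.
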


\begin{proof}
(i) For $\bxi\in Y_1$, we note that
\begin{equation*}
|\bN_0(\bxi)|\le\kp|\bxi|+|\bm{f}_0(\bxi)|=\kp|\bxi|+f_0(|\bxi|)=N_0(|\bxi|),
\end{equation*}
where $N_0$ is the scalar function \eqref{N0_definition}.
The rest follows the proof of Lemma \ref{lem_nonlinear}-(i) with $\beta=1$.

(ii) The Jacobian matrix of $\bm{N}_0$ at $\bm{\xi}\in\R^m$ is given by
\begin{equation*}
\nabla\bm{N}_0(\bm{\xi})=(\kp+1-|\bm{\xi}|^2)I_m-2(\bm{\xi}\otimes\bm{\xi}),
\end{equation*}
where $\otimes$ denotes the tensor product.
The $m$ eigenvalues of $\nabla\bm{N}_0(\bm{\xi})$ are given by
\begin{equation*}
\lambda_1(\bm{\xi})=\kp+1-3|\bm{\xi}|^2,\quad\lambda_2(\bm{\xi})=\lambda_3(\bm{\xi})=\cdots=\lambda_m(\bm{\xi})=\kp+1-|\bm{\xi}|^2.
\end{equation*}
Since $\kappa\ge 2$, for any $\bm{\xi}\in Y_1$ we have
\begin{equation*}
0\le\lambda_1(\bm{\xi})\le\lambda_m(\bm{\xi})\le\kp+1<2\kp.
\end{equation*}
Thus, using the mean-value theorem, we obtain (ii).
\end{proof}

Let $\hY = C(\overline{\Omega};\R^m)$ the space of continuous $\R^m$-valued functions defined on $\overline\Omega$
equipped with the supremum norm
$$\|\bw\|_{\hY}=\max_{\bx\in\overline{\Omega}}|\bw(\bx)|,\quad\forall\,\bw\in {\hY}.$$
By using Banach's fixed-point theorem as done in the proof of Theorem \ref{thm_model_max},
it is easy to show the existence and MBP of the problem \eqref{vector_AllenCahn}
with any integer $m\ge2$ as follows.
The main tools are the uniform ellipticity of the Laplace operator (see, e.g., \cite{Evans00}),
the properties of the nonlinear term given by Lemma \ref{lem2_nonlinear}, and the inequality
\begin{equation}
\label{keyeqn}
\Delta|\bu|^2 = 2|\nabla\bu|_F^2 + 2\bu\cdot\Delta\bu\geq 2\bu\cdot\Delta\bu.
\end{equation}

\begin{proposition}
If it holds that
\begin{subequations}
\label{cond_IBC_vector}
\begin{equation}
|\bu_0(\bx)| \le 1, \quad \forall\,\bx\in\overline{\Omega},
\end{equation}
the equation \eqref{vector_AllenCahn} with either periodic, homogeneous Neumann,
or Dirichlet boundary condition subject to
\begin{equation}
|\bg(t,\bx)| \le 1, \quad \forall\,t\in[0,T],\ \forall\,\bx\in\partial\Omega
\end{equation}
\end{subequations}
has a unique solution $\bu\in C([0,T];\hY)$ and it
satisfies $\|\bu(t)\|_\hY\le1$ for any $t\in[0,T]$.
\end{proposition}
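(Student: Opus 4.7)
The plan is to adapt the Banach fixed-point strategy from Theorem \ref{thm_model_max} to the vector-valued setting, with the scalar maximum-principle argument replaced by an analysis of the scalar auxiliary function $|\bu|^2$. First, I would fix $\kappa\ge 2$ (the threshold from Lemma \ref{lem2_nonlinear}) and rewrite \eqref{vector_AllenCahn} in the equivalent form $\bu_t+\kappa\bu=\Delta\bu+\bN_0(\bu)$. For a small $t_1>0$ to be chosen later, let $\hY_1=\{\bw\in\hY:\|\bw\|_\hY\le 1\}$ and let $C_\bg([0,t_1];\hY_1)$ denote the closed subset of $C([0,t_1];\hY_1)$ consisting of those $\bv$ attaining the data $\bg$ on $\partial\Omega$ in the Dirichlet case (with obvious adjustments in the periodic and Neumann cases). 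Given $\bv\in C_\bg([0,t_1];\hY_1)$, define $\bw=\hA[\bv]$ as the solution of the linearized problem
\begin{equation*}
\bw_t+\kappa\bw=\Delta\bw+\bN_0(\bv),
\end{equation*}
with the prescribed initial and boundary data. Componentwise, this problem is well-posed since $\Delta-\kappa\hI$ generates a contraction semigroup on the scalar space $X$ under each of the three boundary conditions, by Lemma \ref{lem_L_semigroup} (and the Neumann analysis in Section \ref{sec:Neu}).

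The crucial step is to verify that $\hA$ maps $C_\bg([0,t_1];\hY_1)$ into itself, i.e., $\|\bw(t)\|_\hY\le 1$ for all $t\in[0,t_1]$. Rather than arguing on a component of $\bw$, I would study $\phi:=|\bw|^2$. Dotting the PDE with $2\bw$ and invoking the key identity \eqref{keyeqn} yields the parabolic differential inequality
\begin{equation*}
\phi_t-\Delta\phi+2\kappa\phi\le 2\bw\cdot\bN_0(\bv).
\end{equation*}
If $\phi$ attained an interior spacetime maximum $(t^*,\bx^*)$ with $\phi(t^*,\bx^*)>1$, then $\phi_t\ge 0$ and $\Delta\phi\le 0$ there; Cauchy--Schwarz combined with Lemma \ref{lem2_nonlinear}-(i) would give $2\bw\cdot\bN_0(\bv)\le 2\kappa|\bw|=2\kappa\sqrt{\phi}$, so $2\kappa\phi(t^*,\bx^*)\le 2\kappa\sqrt{\phi(t^*,\bx^*)}$, forcing $\phi(t^*,\bx^*)\le 1$ and a contradiction. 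Initial and Dirichlet boundary maxima are ruled out by \eqref{cond_IBC_vector}; the periodic case reduces to the interior one by translation; the homogeneous Neumann case uses $\nabla\phi\cdot\bn=2\bu\cdot(\nabla\bu\cdot\bn)=\bm{0}$ together with Hopf's lemma to exclude strict boundary maxima.

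The remaining step is the contraction estimate. For $\bv,\tilde\bv\in C_\bg([0,t_1];\hY_1)$, Duhamel's formula combined with the contraction of $\{\e^{-\kappa t}S_\Delta(t)\}_{t\ge 0}$ on $\hY$ and the Lipschitz bound in Lemma \ref{lem2_nonlinear}-(ii) yields
\begin{equation*}
\|\hA[\bv]-\hA[\tilde\bv]\|_{C([0,t_1];\hY)}\le 2(1-\e^{-\kappa t_1})\|\bv-\tilde\bv\|_{C([0,t_1];\hY)},
\end{equation*}
exactly as in Theorem \ref{thm_model_max}. Choosing $t_1<\kappa^{-1}\ln 2$ makes $\hA$ a strict contraction on the complete metric space $C_\bg([0,t_1];\hY_1)$; Banach's fixed-point theorem then produces a unique short-time solution, and standard continuation extends it to $[0,T]$.

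The main obstacle will be justifying the reduction to $\phi=|\bw|^2$ rigorously at the level of regularity provided by the semigroup solution: one needs enough smoothness in $\bw$ to evaluate $\Delta\phi$ classically and to apply the parabolic maximum principle pointwise, paralleling the informal usage already present in the proof of Theorem \ref{thm_model_max}. The Neumann case is the delicate one, where the trace identity $\nabla\phi\cdot\bn=0$ must be converted, via Hopf's lemma, into an exclusion of boundary extrema; identifying the scalar quantity $\phi$ and deriving \eqref{keyeqn} is the conceptual heart that enables the scalar MBP machinery to control the vector-valued dynamics.
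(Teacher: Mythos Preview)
Your proposal is correct and follows essentially the same route as the paper: the fixed-point framework of Theorem~\ref{thm_model_max}, the reduction to the scalar quantity $\phi=|\bw|^2$ via the identity~\eqref{keyeqn}, and Lemma~\ref{lem2_nonlinear} for the nonlinear bounds and contraction estimate are exactly the ingredients the paper invokes. The only minor difference is in the Neumann boundary treatment: you exclude boundary maxima of $\phi$ via Hopf's lemma, whereas the paper (as in the proof of the subsequent ETD theorem and the scalar Neumann discussion in Section~\ref{sec:Neu}) simply asserts $\Delta\phi\le 0$ at a boundary maximum under the homogeneous Neumann condition; both arguments are valid and lead to the same conclusion.
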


This proposition actually implies that the closed unit ball in $\R^m$
is an invariant region of the solution to the equation \eqref{vector_AllenCahn}.
Moreover, according to Corollary 14.8-(b) in \cite{Smoller94},
the closed unit ball is the smallest invariant region.

Next we show that,
the ETD1 and ETDRK2 schemes for time integration of the equation \eqref{vector_AllenCahn}
both preserve the discrete MBP unconditionally.
Let $\bv^0=\bu_0$.
For the case of Dirichlet boundary condition,
the ETD1 and ETDRK2 solutions are then given by $\bv^{n+1}=\bw^n(\dt)$ with $\bw^n:[0,\dt]\to\hY$ such that
\begin{equation}
\label{vector_ETD12}
\begin{dcases}
\bw^n_s + \kp \bw^n = \Delta \bw^n + \widehat{\bN}_0(s,\bv^n), & s\in(0,\dt], \ \bx\in\Omega, \\
\bw^n(s,\bx) = \bg(t_n+s,\bx), & s\in[0,\dt], \ \bx\in\partial\Omega, \\
\bw^n(0,\bx) = \bv^n(\bx), & \bx\in\overline{\Omega},
\end{dcases}
\end{equation}
where
\[
\widehat{\bN}_0(s,\bv^n)=
\begin{dcases}
\bN_0(\bv^n), & \text{for ETD1},\\
\Big(1-\frac{s}{\dt}\Big)\bN_0(\bv^n)+\frac{s}{\dt}\bN_0(\tilde{\bv}^{n+1}), & \text{for ETDRK2}
\end{dcases}
\]
with $\tilde{\bv}^{n+1}$ generated by the ETD1 scheme.
For the cases of periodic or homogeneous Neumann boundary condition,
the ETD1 and ETDRK2 solutions are still given by the system \eqref{vector_ETD12} with small modifications of
removing its second equation  and using corresponding properties of the solution on the boundary.

\begin{theorem}
Assume that the stabilizing constant $\kp\ge2$ and \eqref{cond_IBC_vector} holds.
Then  the ETD1 and ETDRK2 schemes of the equation \eqref{vector_AllenCahn} both preserve the discrete MBP unconditionally,
i.e., for any time step size $\dt>0$, the solutions satisfy $\|\bv^n\|_{\hY}\le1$ for any $n\ge0$.
\end{theorem}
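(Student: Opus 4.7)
The plan is to mirror the inductive strategy used in Theorems \ref{thm_ETD1} and \ref{thm_ETDRK2}, but replace the scalar point value $w^k(s^*,\bx^*)$ by the auxiliary scalar field $\varphi(s,\bx)=|\bw^n(s,\bx)|^2$. The key new ingredient, which converts the vector-valued problem back to a scalar pointwise inequality at an interior maximum of $\varphi$, is the identity $\Delta|\bu|^2 = 2|\nabla\bu|_F^2 + 2\bu\cdot\Delta\bu$, i.e.\ inequality \eqref{keyeqn}. This will play the role of the sign of $\hL w$ at a maximum used in the scalar proof.

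Proceed by induction on $n$, with base case $\|\bv^0\|_\hY=\|\bu_0\|_\hY\le 1$ supplied by \eqref{cond_IBC_vector}. Assume $\|\bv^k\|_\hY\le 1$ and show $\|\bv^{k+1}\|_\hY\le 1$. For the ETDRK2 scheme, first apply the ETD1 conclusion to get $\|\tilde{\bv}^{k+1}\|_\hY\le 1$; for both schemes, writing $\widehat{\bN}_0(s,\cdot)$ as a convex combination of $\bN_0$ evaluated at points in the closed unit ball $Y_1$ and invoking Lemma \ref{lem2_nonlinear}-(i) yields
\begin{equation*}
|\widehat{\bN}_0(s,\bx)|\le\kp,\qquad (s,\bx)\in[0,\dt]\times\overline{\Omega}.
\end{equation*}

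Since $\varphi=|\bw^k|^2$ is continuous on the compact cylinder $[0,\dt]\times\overline{\Omega}$, it attains its maximum at some $(s^*,\bx^*)$. Three cases arise. If $s^*=0$ then $\varphi(s^*,\bx^*)=|\bv^k(\bx^*)|^2\le 1$ by the inductive hypothesis. If $\bx^*\in\partial\Omega$ and a Dirichlet condition is imposed, then $\varphi(s^*,\bx^*)=|\bg(t_k+s^*,\bx^*)|^2\le 1$ by \eqref{cond_IBC_vector}; the periodic case reduces to an interior point by periodicity and the homogeneous Neumann case reduces to an interior analysis by the reflection argument already used in Section \ref{sec:Neu}. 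The remaining case is $(s^*,\bx^*)\in(0,\dt]\times\Omega$, where $\varphi_s(s^*,\bx^*)\ge 0$ and the Hessian of $\varphi$ is negative semidefinite so $\Delta\varphi(s^*,\bx^*)\le 0$. Taking the inner product of \eqref{vector_ETD12} with $2\bw^k$ and applying \eqref{keyeqn} gives, at $(s^*,\bx^*)$,
\begin{equation*}
0\le\varphi_s = -2\kp|\bw^k|^2 + 2\bw^k\cdot\Delta\bw^k + 2\bw^k\cdot\widehat{\bN}_0
\le -2\kp|\bw^k|^2 + \Delta\varphi + 2|\bw^k|\,|\widehat{\bN}_0|
\le -2\kp|\bw^k|^2 + 2\kp|\bw^k|,
\end{equation*}
where the last step uses $\Delta\varphi\le 0$, Cauchy--Schwarz, and the bound on $|\widehat{\bN}_0|$. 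This yields $|\bw^k(s^*,\bx^*)|(|\bw^k(s^*,\bx^*)|-1)\le 0$, hence $|\bw^k(s^*,\bx^*)|\le 1$. Combining the cases, $\sup_{[0,\dt]\times\overline{\Omega}}|\bw^k|\le 1$, so in particular $\|\bv^{k+1}\|_\hY=\|\bw^k(\dt)\|_\hY\le 1$, closing the induction.

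The main obstacle is the Neumann case, where a boundary maximum of $\varphi$ does not immediately yield $\Delta\varphi\le 0$ from the vector Neumann condition $\nabla\bu\cdot\bn=\bm{0}$; one needs a reflection/even-extension argument analogous to the scalar treatment of Section \ref{sec:Neu} to reduce to an interior maximum. Once this reduction is in place, the key vector-to-scalar step \eqref{keyeqn} together with the $\kp\ge 2$ bound from Lemma \ref{lem2_nonlinear}-(i) drives the argument through uniformly for both ETD1 and ETDRK2. Note that the hypothesis $\kp\ge 2$ is used precisely to guarantee $|\widehat{\bN}_0|\le\kp$ on the unit ball, with no further smallness assumption on $\dt$.
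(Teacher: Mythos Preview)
Your proof is correct and follows essentially the same approach as the paper: take the dot product of \eqref{vector_ETD12} with $\bw^k$, use the key inequality \eqref{keyeqn} to obtain a scalar differential inequality for $|\bw^k|^2$, and then run a pointwise maximum argument together with Lemma~\ref{lem2_nonlinear}-(i). Your handling of the three cases ($s^*=0$, boundary, interior) and your explicit note about the reflection argument for the Neumann boundary case are slightly more detailed than the paper's presentation, but the strategy and key steps are identical.
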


\begin{proof}
Since $\|\bv^0\|_{\hY}\le1$,
we just need to show that $\|\bv^k\|_{\hY}\le1$ implies $\|\bv^{k+1}\|_{\hY}\le1$ for any $k$.
We have $\bv^{k+1}=\bw^k(\dt)$,
where $\bw^k$ satisfies \eqref{vector_ETD12} with the superscript $n$ replaced by $k$.
Taking the dot product of the first equation in \eqref{vector_ETD12} with $\bw^k$
and using the fact from \eqref{keyeqn} that $\Delta|\bw^k|^2 \ge 2\bw^k\cdot\Delta\bw^k$,
we obtain
\begin{equation}
\label{thm_vector_ETD_pf}
\frac{1}{2}(|\bw^k|^2)_s + \kp|\bw^k|^2 \le \frac{1}{2}\Delta|\bw^k|^2 + |\widehat{\bN}_0(s,\bv^k)|\,|\bw^k|.
\end{equation}
Suppose there exists $(s^*,\bx^*)\in(0,\dt]\times\widehat{\Omega}$
such that
\begin{equation*}
|\bw^k(s^*,\bx^*)|=\max_{0\le s\le \dt}\|\bw^k(s)\|_{\hY}.
\end{equation*}
Since $|\bw^k(s,\bx)|^2$ is a real scalar-valued function,
we have $(|\bw^k|^2)_s\ge0$ at $(s^*,\bx^*)$.
If $\bx^*\in\Omega$, we have $\Delta|\bw^k(s^*,\bx^*)|^2\le0$.
If $\bx^*\in\partial\Omega$, we have $|\bw^k(s^*,\bx^*)|\le1$
for the case of Dirichlet boundary condition (the proof is then in fact completed in this case)
or $\Delta|\bw^k(s^*,\bx^*)|^2\le0$
for the cases of periodic and homogeneous Neumann boundary conditions.
Then we obtain from \eqref{thm_vector_ETD_pf} that
$\kp|\bw^k(s^*,\bx^*)| \le |\widehat{\bN}_0(s^*,\bv^k(\bx^*))|$.
Since $\|\bv^k\|_{\hY}\le1$, according to Lemma \ref{lem2_nonlinear}-(i),
for both ETD1 and ETDRK2 schemes, we always have $|\widehat{\bN}_0(s^*,\bv^k(\bx^*))|\le\kp$,
and thus $|\bw^k(s^*,\bx^*)|\le1$.
Then we have $\|\bv^{k+1}\|_{\hY}\le1$, which completes the proof.
\end{proof}

\begin{remark}
For the space-discrete version of the equation  \eqref{vector_AllenCahn},
an essential condition for the MBP to hold is that $\Delta_h$,
the spatial discretization of the operator $\Delta$,
satisfies $\Delta_h|\bu|^2 \geq 2\bu\cdot\Delta_h\bu$.
\end{remark}

Similar to the scalar-valued problem,
here we present the fully discrete ETD1 and ETDRK2 schemes for practical computations.
With $\hL=\Delta$, we still use the notations  $\hL_{hc}$ and  $\hL_{\kp,h}$
as defined by \eqref{def_boundaryopt} and \eqref{def_Lkp} respectively, then
the fully discrete ETDRK2 scheme reads
\[
\begin{dcases}
\tilde{\bv}^{n+1}
=\varphi_0(\dt\hL_{\kp,h} )\bv^n+\dt \varphi_1(\dt\hL_{\kp,h} )\bN_0(\bv^n)
+ \int_0^\dt \e^{-(\dt-s)\hL_{\kp,h}} \hL_{hc} \bg(t_n+s)\,\d s,\\
\bv^{n+1} =\tilde{\bv}^{n+1} + \dt\varphi_2(\dt\hL_{\kp,h} ) [\bN_0(\tilde{\bv}^{n+1})-\bN_0(\bv^n)],
\end{dcases}
\]
and the fully discrete ETD1 scheme is given by the first step of the ETDRK2 scheme.
The schemes for cases of homogeneous Neumann and periodic boundary conditions
could be given in the similar way.

\subsection{Extension to real matrix-valued equations}

In \cite{OsWa20}, a problem of
finding the stationary points of an energy for orthogonal matrix-valued functions was studied.
Since the orthogonality constraint is non-trivial to enforce,
a penalty term is added to the energy to offer a relaxed (phase-field or diffuse-interface) formulation.
The gradient flow for such energy reads
\begin{equation}
\label{matrix_AllenCahn}
U_t=\Delta U+U(I_m-U^TU),\quad t\in(0,T],\ \bx\in\Omega,
\end{equation}
where $U:[0,T]\times\overline{\Omega}\to\R^{m\times m}$ is subject to the initial condition
\[
U(0,\cdot)=U_0,\quad\bx\in\overline{\Omega}
\]
and either homogeneous Dirichlet, periodic, or homogeneous Neumann boundary condition.
Denote by $|\cdot |_2$ the matrix $2$-norm
and by $\R^{m\times m}_s$ the set of all real symmetric $m$-by-$m$ matrices.

Let us define $\hZ = C(\overline{\Omega};\R^{m\times m}_s)$ the space of continuous $\R^{m\times m}_s$-valued functions defined
on $\overline\Omega$ equipped with the supremum norm
$$\|W\|_{\hZ}=\max_{\bx\in\overline{\Omega}} |W(\bx)|_2,\quad\forall\, W\in \hZ.$$
Similarly to $X$ for the scalar-value equation case,
we then define $Z$, as well as $D(\hL_0)$ and $\hL_0$,  in accordance with $\hL=\Delta$ and the respective boundary conditions.
Then, we can show that $\hL_0:D(\hL_0)\to Z$ satisfies
the matrix-valued analogue of Lemma \ref{lem_L_semigroup}.

\begin{lemma}
\label{lem3_L_semigroup}
For all $\lambda>0$ and all $W\in D(\hL_0)$, it holds
\begin{equation}
\label{lem3_L_dissipative}
\|(\lambda\hI-\Delta)W\|_{\hZ}\ge\lambda\|W\|_{\hZ},
\end{equation}
and thus $\hL_0$ generates a contraction semigroup $\{S_{\hL_0}(t)\}_{t\ge0}$ on $Z$, i.e., $\opnorm{S_{\hL_0}(t)}\le1$.
\end{lemma}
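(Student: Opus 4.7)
The plan is to reduce the matrix-valued dissipativity estimate to a scalar version by exploiting the fact that for any symmetric matrix $M\in\R^{m\times m}_s$, the spectral norm admits the representation
\[
|M|_2 = \max_{v\in\R^m,\,|v|=1} |v^T M v|.
\]
Consequently, for $W\in\hZ$, one has $\|W\|_{\hZ}=\max_{\bx\in\overline{\Omega},\,|v|=1}|v^T W(\bx)v|$, and by continuity and compactness this maximum is attained at some pair $(\bx_0,v_0)$ with $|v_0|=1$. After possibly replacing $W$ by $-W$, we may assume $v_0^T W(\bx_0) v_0 = \|W\|_{\hZ}\ge 0$.

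First I would introduce the scalar auxiliary function $\phi(\bx):=v_0^T W(\bx) v_0$. Since $\Delta$ acts entrywise and preserves symmetry, $\Delta W \in \hZ$, and $\Delta\phi(\bx)=v_0^T\Delta W(\bx)v_0$. By construction $\phi\in C(\overline{\Omega})$ attains its maximum at $\bx_0$. I would then mirror the proof of Lemma~\ref{lem_L_semigroup}-(i) on $\phi$: if $\bx_0\in\Omega$ the classical interior-maximum argument gives $\Delta\phi(\bx_0)\le 0$; if $\bx_0\in\partial\Omega$ one splits by boundary type, using the definitions of $Z$ and $D(\hL_0)$ (homogeneous Dirichlet makes $\phi(\bx_0)=0$, which forces $W\equiv 0$ on the maximizing argument; periodicity allows shifting $\bx_0$ into $\Omega$; homogeneous Neumann gives $\nabla\phi(\bx_0)\cdot\bn=0$ and a reflective argument still yields $\Delta\phi(\bx_0)\le 0$). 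In all cases we obtain
\[
v_0^T \Delta W(\bx_0) v_0 \le 0.
\]

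With this in hand, the estimate \eqref{lem3_L_dissipative} follows in one line:
\begin{align*}
\|(\lambda\hI-\Delta)W\|_{\hZ}
&\ge |(\lambda\hI-\Delta)W(\bx_0)|_2
\ge \bigl|v_0^T\bigl(\lambda W(\bx_0)-\Delta W(\bx_0)\bigr)v_0\bigr| \\
&= \lambda\,v_0^T W(\bx_0) v_0 - v_0^T \Delta W(\bx_0) v_0
\ge \lambda\|W\|_{\hZ}.
\end{align*}
Once dissipativity is established, the semigroup conclusion follows by the Lumer--Phillips theorem, provided that $D(\hL_0)$ is dense in $Z$ (which follows from the density of smooth symmetric matrix-valued test functions satisfying the given boundary conditions) and that $\lambda_0\hI-\Delta$ is surjective onto $Z$ for some $\lambda_0>0$. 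Surjectivity is obtained entrywise: solving $\lambda_0 W-\Delta W=F$ componentwise on the $m(m+1)/2$ independent symmetric entries reduces to the scalar elliptic theory already invoked in Example~\ref{eg_linear_elliptic}, and symmetry of the data $F\in Z$ transfers to the solution by the uniqueness of each scalar entry.

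The main obstacle I anticipate is the boundary analysis: verifying that the \emph{scalar} reduction $\phi=v_0^T W(\cdot)v_0$ inherits the boundary behavior required to run the argument of Lemma~\ref{lem_L_semigroup}-(i). This is automatic for Dirichlet and periodic conditions, but for homogeneous Neumann one must check that $\nabla W(\bx_0)\cdot\bn=0$ (componentwise) indeed implies $\nabla\phi(\bx_0)\cdot\bn=0$, which it does since $\nabla\phi=v_0^T(\nabla W)v_0$ acts linearly in the entries of $W$. Once these checks are in place, all three boundary regimes are handled uniformly and the proof concludes exactly as in the scalar setting.
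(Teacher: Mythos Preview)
Your proposal is correct and follows essentially the same idea as the paper's proof: both reduce the matrix-valued estimate to a scalar maximum-principle argument by fixing the eigenvector (equivalently, the diagonalizing orthogonal matrix) at the point $\bx_0$ where $\|W\|_{\hZ}$ is attained. The paper proceeds in two steps---first treating diagonal-valued $W$ via the diagonal entry $w_{i_0}$, then handling general symmetric $W$ by conjugating with a constant orthogonal $O$ that diagonalizes $W(\bx_0)$---whereas you go directly to the scalar function $\phi=v_0^TWv_0$ via the Rayleigh-quotient characterization of $|\cdot|_2$; these are the same device, since your $\phi$ is precisely the $(i_0,i_0)$ entry of $O^TWO$ when $v_0=Oe_{i_0}$. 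Your boundary discussion and the Lumer--Phillips verification are also in line with how the paper handles the scalar case.
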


\begin{proof}
First, for any diagonal matrix $W(\bx)=\diag\{w_i(\bx):1\le i\le m\}$,
there exists $\bx_0\in\Omega$ (for the homogeneous Dirchlet boundary condition)
or $\bx_0\in\overline{\Omega}$ (for the periodic or homogeneous Neumann boundary condition)
and $i_0$ such that
$$\|W\|_{\hZ}= |W(\bx_0)|_2=|w_{i_0}(\bx_0)|=\max_{\bx\in\overline{\Omega}}|w_{i_0}(\bx)|.$$
Since $|w_{i_0}(\bx)|^2$ is a real scalar-valued function, we have
$$2w_{i_0}(\bx_0)\Delta w_{i_0}(\bx_0)\le2w_{i_0}(\bx_0)\Delta w_{i_0}(\bx_0)+2|\nabla w_{i_0}(\bx_0)|^2
=\Delta|w_{i_0}(\bx_0)|^2\le0.$$
Then, for any $\lambda>0$, we have
\begin{align}
\lambda |W(\bx_0) |_2^2
& \le\lambda|w_{i_0}(\bx_0)|^2-w_{i_0}(\bx_0)\Delta w_{i_0}(\bx_0)\nonumber\\
& =w_{i_0}(\bx_0)\cdot(\lambda\hI-\Delta)w_{i_0}(\bx_0)
\le |W(\bx_0) |_2 |(\lambda\hI-\Delta)W(\bx_0) |_2,\label{lem3_L_dissipative_pf}
\end{align}
which implies \eqref{lem3_L_dissipative} for any diagonal matrix $W$.

Next, for any $W\in D(\hL_0)$, let $\bx_0$ be the point such that
$$\|W\|_{\hZ}= |W(\bx_0)|_2=\max_{\bx\in\overline{\Omega}} |W(\bx) |_2.$$
Since $W(\bx_0)$ is symmetric, there exists an orthonormal matrix $O$ such that $\widehat{W}=O^TW(\bx_0)O$ is diagonal.
We then derive from \eqref{lem3_L_dissipative_pf} that
$$\lambda |W(\bx_0) |_2=\lambda |\widehat{W} |_2
\le |(\lambda\hI-\Delta)\widehat{W} |_2= |(\lambda\hI-\Delta)W(\bx_0) |_2\le\|(\lambda\hI-\Delta)W\|_{\hZ},$$
which leads to \eqref{lem3_L_dissipative}.
\end{proof}

Introducing a stabilizing constant $\kp>0$, the equation \eqref{matrix_AllenCahn} is equivalent to
\begin{equation}
\label{matrix_intform}
U(t+\dt)=\e^{-\kp\dt}S_{\hL_0}(\dt)U(t)+\int_0^\dt\e^{-\kp(\dt-s)}S_{\hL_0}(\dt-s)\hN_0(U(t+s))\,\d s,
\end{equation}
where
\begin{equation}
\label{N0mat_definition}
\hN_0(Q)=\kp Q+Q(I_m-Q^TQ),\quad Q\in\R^{m\times m}.
\end{equation}
We again require $\kp\ge 2$, and then obtain the following lemma on the matrix-valued function $\hN_0$.

\begin{lemma}
\label{lem3_nonlinear}
Denote ${\mathcal M}_1=\{Q\in\R^{m\times m}_s \,|\, |Q|_2\le1\}$. It holds that

{\rm(i)} $ |\hN_0(Q) |_2\le\kp$ for any $Q\in {\mathcal M}_1$;

{\rm(ii)} $ |\hN_0(Q_1)-\hN_0(Q_2) |_2\le2\kp |Q_1-Q_2 |_2$ for any $Q_1,Q_2\in {\mathcal M}_1$.
\end{lemma}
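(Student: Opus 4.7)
The plan is to exploit the symmetry of $Q$, $Q_1$, $Q_2$ so as to reduce matrix estimates either to scalar estimates (via spectral decomposition) or to simple telescoping identities controlled by the submultiplicativity of $|\cdot|_2$. The key observation at the outset is that for symmetric $Q$ we have $Q^T Q = Q^2$, so $\hN_0(Q) = (\kp+1) Q - Q^3$, a \emph{polynomial} in $Q$, and $\R^{m\times m}_s$ is preserved by $\hN_0$.

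For part (i), I would write the spectral decomposition $Q = O^T \Lambda O$ with $O$ orthogonal and $\Lambda = \diag(\lambda_1,\dots,\lambda_m)$; since $|Q|_2 = \max_i |\lambda_i| \le 1$, each $\lambda_i \in [-1,1]$. Because $\hN_0$ is a polynomial, it commutes with orthogonal conjugation, giving $\hN_0(Q) = O^T \diag\bigl(N_0(\lambda_i)\bigr) O$, where $N_0(\xi) = \kp\xi + \xi - \xi^3$ is exactly the scalar $N_0$ from \eqref{N0_definition} associated with $f_0(\xi)=\xi-\xi^3$. I would then invoke Lemma \ref{lem_nonlinear}-(i) with $\beta=1$ (noting that $\kp\ge 2$ meets \eqref{kappacon} for this $f_0$) to conclude $|N_0(\lambda_i)| \le \kp$, and hence $|\hN_0(Q)|_2 = \max_i |N_0(\lambda_i)| \le \kp$.

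For part (ii), set $D = Q_1 - Q_2$, so that
\begin{equation*}
\hN_0(Q_1) - \hN_0(Q_2) = (\kp+1) D - (Q_1^3 - Q_2^3),
\end{equation*}
and handle the cubic difference by the telescoping identity
\begin{equation*}
Q_1^3 - Q_2^3 = Q_1^2 D + Q_1 D Q_2 + D Q_2^2,
\end{equation*}
verified by direct expansion. Submultiplicativity of $|\cdot|_2$ together with $|Q_i|_2 \le 1$ yields $|Q_1^3 - Q_2^3|_2 \le 3|D|_2$, and the triangle inequality produces a bound of the form $|\hN_0(Q_1)-\hN_0(Q_2)|_2 \le (\kp + C)|D|_2$ with $C$ an absolute constant arising from the cubic part. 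Choosing the stabilizer $\kp$ large enough that $\kp + C \le 2\kp$ then delivers the claimed Lipschitz estimate $\le 2\kp|D|_2$.

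The main obstacle is handling $Q_1^3 - Q_2^3$ without being able to diagonalize $Q_1$ and $Q_2$ simultaneously: this non-commutativity blocks a direct spectral reduction as in (i) and is the reason the cubic difference costs an absolute additive constant in the Lipschitz constant, rather than giving back the sharper $\kp+1$ seen in the scalar and vector cases. An alternative route producing the same bound is the mean-value representation $Q_1^3 - Q_2^3 = \int_0^1 \frac{\d}{\d t}\bigl[(tQ_1+(1-t)Q_2)^3\bigr]\d t$, whose integrand splits into three terms each bounded by $|D|_2$ since the convex combination $tQ_1+(1-t)Q_2$ again lies in $\mathcal{M}_1$. Either route suffices, and the resulting lemma plays the same role for the symmetric matrix-valued MBP analysis that Lemma \ref{lem_nonlinear} did in the scalar case.
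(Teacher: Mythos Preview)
Your proof of part (i) is essentially identical to the paper's: both diagonalize $Q$ orthogonally and reduce to the scalar Lemma \ref{lem_nonlinear}-(i).

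For part (ii) you take a genuinely different and more elementary route. The paper views $\hN_0$ as a map $\R^{m^2}\to\R^{m^2}$, computes the matrix derivative
\[
\mathrm{D}\hN_0(Q)=(\kp+1)I_{m^2}-(Q^2\otimes I_m+I_m\otimes Q^2+Q\otimes Q),
\]
finds its eigenvalues $\kp+1-(\mu_i^2+\mu_j^2+\mu_i\mu_j)\in[\kp-2,\kp+1]$ (with $\mu_i$ the eigenvalues of $Q$), and applies the mean-value theorem. Your telescoping/submultiplicativity argument avoids all Kronecker-product machinery and works verbatim for any submultiplicative matrix norm, which is a real advantage. However, your constant is weaker: carrying out your arithmetic gives
\[
|\hN_0(Q_1)-\hN_0(Q_2)|_2\le(\kp+1)|D|_2+3|D|_2=(\kp+4)|D|_2,
\]
so your $C$ equals $4$ and ``$\kp+C\le 2\kp$'' forces $\kp\ge 4$, whereas the lemma is stated under the standing hypothesis $\kp\ge 2$. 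The paper's eigenvalue computation yields the sharper operator bound $\kp+1<2\kp$, valid already for $\kp\ge 2$. So your argument is correct but proves a slightly weaker statement; if you want the lemma exactly as stated you either need to tighten the hypothesis to $\kp\ge 4$ or adopt the Jacobian eigenvalue calculation.
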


\begin{proof}
(i) Since any real symmetric matrix can be diagonalized orthonormally,
and $\hN_0(Q)$ is also diagonal for any diagonal matrix $Q\in {\mathcal M}_1$,
the property (i) is the direct consequence of Lemma \ref{lem_nonlinear}-(i).

(ii) Since $\R^{m\times m}$ is identical to $\R^{m^2}$ in the sense of isomorphism,
the matrix-valued function $\hN_0:\R^{m\times m}\to\R^{m\times m}$ defined by \eqref{N0mat_definition}
could be regarded as a vector-valued mapping $\R^{m^2}\to\R^{m^2}$,
whose Jacobian matrix gives the matrix derivative of $\hN_0$.
In this sense, the matrix derivative of $\hN_0$ at $Q\in\R^{m\times m}_s$ is given by \cite[Theorem 4]{MacRae74}
\begin{equation*}
\mathrm{D}\hN_0(Q) = (\kp+1)I_{m^2} - (Q^2\otimes I_m+I_m\otimes Q^2+Q\otimes Q).
\end{equation*}
Denote by $\{\mu_j\}_{j=1}^m$ the eigenvalues of $Q$. Then
the eigenvalues of $\mathrm{D}\hN_0(Q)$, denoted by $\{\lambda_{ij}\}_{i,j=1}^m$, are given by \cite[Theorem 4.2.12]{HoJo91}
\begin{equation*}
\lambda_{ij} = \kp+1 - (\mu_i^2+\mu_j^2+\mu_i\mu_j), \quad 1\le i,j\le m.
\end{equation*}
For any $Q\in {\mathcal M}_1$, it holds $0\le \mu_i^2+\mu_j^2+\mu_i\mu_j\le 3$.
Since $\kp\ge 2$, we have
\begin{equation*}
0 \le \lambda_{ij} \le \kp+1 < 2\kp, \quad 1\le i,j\le m.
\end{equation*}
Thus,  we obtain the property (ii) by using the mean-value theorem.
\end{proof}

By conducting the similar analysis as done in Section \ref{subsect_framework} and \cite{DuJuLiQi19},
we can prove the MBP for the matrix-valued equation \eqref{matrix_AllenCahn}.

\begin{proposition}
If $U_0(\bx)$ is symmetric and  $|U_0(\bx)|_2\le1$ for any $\bx\in\overline{\Omega}$, then
the equation \eqref{matrix_AllenCahn} with either homogeneous Dirichlet, periodic, or homogeneous Neumann boundary condition
has a unique solution $U\in C([0,T];\hZ)$ and it satisfies $\|U(t)\|_{\hZ}\le1$ for any $t\in[0,T]$.
\end{proposition}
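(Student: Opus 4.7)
The plan is to mirror the Banach fixed-point argument used in Theorem \ref{thm_model_max}, now in the matrix-valued setting, leveraging the two preparatory lemmas (Lemmas \ref{lem3_L_semigroup} and \ref{lem3_nonlinear}) that were just established. I would first rewrite \eqref{matrix_AllenCahn} in its mild/integral form \eqref{matrix_intform} on a short time interval $[0,t_1]$, and introduce the closed set $\hZ_1=\{W\in\hZ\,|\,\|W\|_{\hZ}\le 1\}$. Three preliminary observations drive the whole argument: (a) for symmetric $Q$, $\hN_0(Q)=\kp Q+Q-Q^3$, so $\hN_0$ sends symmetric matrix-valued continuous functions to themselves; (b) the homogeneous Dirichlet, periodic, or homogeneous Neumann boundary condition is absorbed into the definition of $D(\hL_0)$, so by Lemma \ref{lem3_L_semigroup} we have the contraction semigroup $\{S_{\hL_0}(t)\}_{t\ge 0}$ with $\opnorm{S_{\hL_0}(t)}\le 1$ on $Z$; (c) the Laplacian acts entrywise, hence $S_{\hL_0}(t)$ preserves symmetry.

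For $V\in C([0,t_1];\hZ_1)$, I would define $\hA[V]=W$ where
\begin{equation*}
W(t)=\e^{-\kp t}S_{\hL_0}(t)U_0+\int_0^t\e^{-\kp(t-s)}S_{\hL_0}(t-s)\hN_0(V(s))\,\d s,\quad t\in[0,t_1].
\end{equation*}
Using $\opnorm{S_{\hL_0}(\cdot)}\le 1$ together with $\|\hN_0(V(s))\|_{\hZ}\le\kp$ from Lemma \ref{lem3_nonlinear}-(i), I obtain
\begin{equation*}
\|W(t)\|_{\hZ}\le\e^{-\kp t}\|U_0\|_{\hZ}+\kp\int_0^t\e^{-\kp(t-s)}\,\d s\le\e^{-\kp t}+(1-\e^{-\kp t})=1,
\end{equation*}
so $\hA$ maps $C([0,t_1];\hZ_1)$ into itself. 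By Lemma \ref{lem3_nonlinear}-(ii),
\begin{equation*}
\|\hA[V_1]-\hA[V_2]\|_{C([0,t_1];\hZ)}\le 2(1-\e^{-\kp t_1})\|V_1-V_2\|_{C([0,t_1];\hZ)},
\end{equation*}
which is a strict contraction whenever $t_1<\kp^{-1}\ln 2$. Since $\hZ_1$ is closed in $\hZ$, Banach's fixed-point theorem provides a unique local solution $U\in C([0,t_1];\hZ_1)$, and the usual continuation argument extends it to $[0,T]$ with $\|U(t)\|_{\hZ}\le 1$ throughout.

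The main delicate point, where the matrix-valued case genuinely departs from the scalar one, is ensuring that the iterates stay symmetric so that both Lemma \ref{lem3_L_semigroup} (which requires $W\in D(\hL_0)\subset Z$) and Lemma \ref{lem3_nonlinear} (which bounds $\hN_0$ on $\R^{m\times m}_s$) are genuinely applicable. This reduces to checking that symmetry is propagated by $S_{\hL_0}(t)$ and by $Q\mapsto\hN_0(Q)$; the first holds because the Laplacian and the chosen boundary conditions act componentwise on the matrix entries, and the second holds because $\hN_0(Q)=\kp Q+Q-Q^3$ is a polynomial in a single symmetric matrix. A secondary, minor point is that because the three boundary conditions under consideration are either homogeneous or periodic, there is no auxiliary boundary term analogous to $g$ in \eqref{thm_mbp_pf1}, and the initial compatibility $\|U_0\|_{\hZ}\le 1$ alone suffices to close the fixed-point estimate.
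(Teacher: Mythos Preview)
Your argument is correct and follows essentially the same route as the paper's own proof: rewrite \eqref{matrix_AllenCahn} in the mild form \eqref{matrix_intform}, use Lemma~\ref{lem3_L_semigroup} and Lemma~\ref{lem3_nonlinear}-(i) to show the map $\hA$ sends the closed unit ball of $C([0,t_1];\hZ)$ into itself, use Lemma~\ref{lem3_nonlinear}-(ii) to get the contraction constant $2(1-\e^{-\kp t_1})$, and conclude by Banach's fixed-point theorem plus continuation. Your explicit remarks on why symmetry is propagated by $S_{\hL_0}(t)$ and by $Q\mapsto\hN_0(Q)$ are a useful addition that the paper leaves implicit.
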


\begin{proof}
Setting $t=0$ in \eqref{matrix_intform} gives us
\begin{equation*}
U(\dt)=\e^{-\kp\dt}S_{\hL_0}(\dt)U_0+\int_0^\dt\e^{-\kp(\dt-s)}S_{\hL_0}(\dt-s)\hN_0(U(s))\,\d s,\quad\dt\ge0.
\end{equation*}
Denote $Z_1=\{W\in Z \,|\, \|W\|_{\hZ}\le1\}$.
For a fixed $t_1>0$ and a given $V\in C([0,t_1];Z_1)$, let us define $W:[0,t_1]\to Z$ by
\begin{equation}
\label{matrix_solution_pf1}
W(\dt)=\e^{-\kp\dt}S_{\hL_0}(\dt)U_0+\int_0^\dt\e^{-\kp(\dt-s)}S_{\hL_0}(\dt-s)\hN_0(V(s))\,\d s,\quad\dt\in[0,t_1].
\end{equation}
Obviously, $W$ is uniquely defined and
$$\|W(\dt)\|_{\hZ}\le\e^{-\kp\dt}\opnorm{S_{\hL_0}(\dt)}\|U_0\|_{\hZ}
+\int_0^\dt\e^{-\kp(\dt-s)}\opnorm{S_{\hL_0}(\dt-s)}\|\hN_0(V(s))\|_{\hZ}\,\d s.$$
According to Lemmas \ref{lem3_L_semigroup} and \ref{lem3_nonlinear}-(i), we derive
$$\|W(\dt)\|_{\hZ}\le\e^{-\kp\dt}+\Big(\int_0^\dt\e^{-\kp(\dt-s)}\,\d s\Big)\kp
=\e^{-\kp\dt}+\frac{1-\e^{-\kp\dt}}{\kp}\cdot\kp=1,\quad\dt\in[0,t_1],$$
which means $W\in C([0,t_1];Z_1)$.

Then, for $t_1<\kp^{-1}\ln2$, similar to the proof of Theorem \ref{thm_model_max} and using Lemma \ref{lem3_nonlinear}-(ii),
the mapping $\hA:C([0,t_1];Z_1)\to C([0,t_1];Z_1)$ defined by $\hA[V]=W$ according to \eqref{matrix_solution_pf1}
is a contraction.
We then can conclude, similarly to the earlier scalar case,
a unique solution $U(t)\in Z_1$ to the matrix-valued equation \eqref{matrix_AllenCahn}
exists on the time interval $[0,t_1]$ and can be further extended to $[0,T]$.
This completes the proof.
\end{proof}

By using Lemma \ref{lem3_nonlinear}, it also can be shown that both the ETD1 and ETDRK2 schemes
for time integration of \eqref{matrix_AllenCahn} again
preserve the discrete MBP unconditionally when the stabilizing constant $\kp\ge 2$.

\section{Numerical experiments}
\label{sect_numerical}

There exists a large amount of literature comparing and
showing the excellent  performance of the ETD schemes  in numerical simulations for local continuum and nonlocal models
\cite{DuJuLiQi19,JuLiQiZh18,JuZhDu15,JuZhZhDu15,WaJuDu16,ZhZhWaJuDu16,ZhJuZh16}.
The practical efficiency of the fully discrete ETD schemes (i.e., the finite dimensional cases)
depends highly on the implementation of the actions of the operator/matrix exponentials.
We first review existing  algorithms for computing the products of the matrix exponentials with vectors,
and then we present some detailed experimental results.

\subsection{Implementations of matrix exponentials}

Let us recall the fully discrete ETDRK2 scheme \eqref{fullydis_ETDRK2_phi}.
There exist many efficient algorithms for
computing the matrix functions $\varphi_\gamma(\dt\hL_{\kp,h} )$, $\gamma=0,1,2$
and their products with vectors.

When the spatial domain of the problem is regular,
for instance, a rectangle $\prod_{j=1}^d(a_j,b_j)$,
and the matrix $\hL_{\kp,h} $ has some certain special structure,
fast Fourier transform (FFT) based algorithms are adequate
to calculate the above products of $\varphi$-functions with vectors.
This case arises commonly in the models for material sciences.
For example, the matrices $\hL_h$ given
in Examples \ref{eg_laplacian_dis} and \ref{eg_nonlocal_dis}
are symmetric Toeplitz matrices for Case (C1) or circulant matrices for Case (C2).
As we know, the product of a circulant matrix with a vector actually gives a circulant convolution,
which could be implemented by using the FFT.
For Toeplitz matrices, their products with a vector can be calculated by the sine or cosine transform.
Alternatively, one can expand a Toeplitz matrix to a large circulant matrix and again make use of FFT
for fast implementations.

When the shape of the spatial domain is arbitrary
or the matrix $\hL_{\kp,h} $ does not possess a certain special structure,
it is usually difficult to develop fast algorithms for matrix exponentials and their products with vector.
In \cite{Hig08,MoVan78,MoVan03}, many methods are surveyed for computing the exponential of a matrix,
such as Taylor series, ODE solver, inverse Laplace transform, matrix decomposition and so on.
The Matlab built-in command \verb"expm(A)" computes the matrix exponential of $A$
by using a scaling and squaring algorithm with a Pad\'e approximation.
The performance of these methods often depends on the target problems
and some of them are appropriate only for certain special problems.
In the past two decades,
Krylov subspace method based on Arnoldi or Lanczos iterations has become a powerful tool
for computing the products of matrix exponentials with vectors \cite{GaSa92,HoLu97,HoLuSe98,Sidje98},
especially for large-scale sparse problems.
More recently, this method also has been further combined with
incomplete orthogonalization \cite{GaPu16} and adaptive time stepping \cite{GaRaTo18,NiWr12}.

\begin{remark}
In the ETD1 scheme, by approximating $\e^{\dt\hL_{\kp,h}}\approx\hI+\dt\hL_{\kp,h} $,
one can obtain the first-order semi-implicit scheme,
which is linear just as the ETD1 scheme and also preserves the MBP \cite{ShTaYa16,TaYa16}.
In general, the ETD1 scheme is slightly more time-consuming than
the classic semi-implicit schemes but with the added benefits of being more accurate \cite{JuLiQiZh18,JuZhDu15}.
The fully-implicit backward Euler scheme naturally possesses good numerical stability
(no extra stabilization term is needed),  but it is not linear and usually more time-consuming
due to the need of nonlinear iterations at each time step.
A nonlinear Crank--Nicolson scheme was proposed and proved to conditionally preserve the MBP in \cite{HoTaYa17}.
To our best knowledge, there is so far no other linear  second-order (in time) scheme
like the ETDRK2 scheme which is unconditionally MBP-preserving.
Furthermore, the ETD schemes preserve the exponential behavior of the linear operator,
which is often crucial in the practical simulations of stiff systems
(i.e., the spectral radius $\rho(\hL_{\kp,h})\gg1$).
\end{remark}

\subsection{Experimental results}

Two examples will be tested to show
the MBP-preserving property and numerical performance of the ETD methods.
The first example focuses on the scalar equation \eqref{model_eq}
with the nonlinear function \eqref{eg_f_flory} consisting of logarithmic terms.
The second example solves the vector-valued equation \eqref{vector_AllenCahn}
to show some interesting evolutions of vortices in composite domains.
In all experiments, the ETDRK2 scheme with uniform time step size $\dt=0.01$ is used for time integration.

\begin{example}
\label{eg_num1}
Consider the scalar equation \eqref{model_eq} of the unknown function $u:\Omega\subset\R^2\to\R$ in the domain $\Omega=(0,2\pi)\times(0,2\pi)$
with $\hL=0.01\Delta$ and $f_0$ given by \eqref{eg_f_flory}, i.e., the negative of the derivative of Flory--Huggins potential.
The maximum bound principle plays an important role in this case
since the equation consists of the logarithmic terms
which will yield complex numbers if the value of the solution is located out of the interval $(-1,1)$.
The periodic and homogeneous Neumann boundary conditions are considered.
\end{example}

We use  the uniform rectangular mesh with size $h=2\pi/512$ for partition of the domain in this example and
a random data ranging from $-0.9$ to $0.9$ is generated on the mesh as the initial configuration of $u$.
The spatial discretization is done through the central finite difference as Example \ref{eg_laplacian_dis}
so that the FFT can be used here for fast calculations needed in the ETDRK2 scheme.
We set $\theta=0.8$ and $\theta_c=1.6$ in \eqref{eg_f_flory}.
According to Example \ref{eg_nonlinear_flory}, the positive root of $f_0(\rho)=0$ is $\rho\approx 0.9575$
and the stabilizing constant is thus chosen as $\kp=8.02$.

\figurename~\ref{fig_scalar_per} shows the configurations of the approximate solution $u$
at $t=1$, $5$, $8$, $30$, $100$, and $160$ subject to the periodic boundary condition.
The simulation results illustrate
the dynamics beginning with a random state and towards the homogeneous steady state of $u\equiv -\rho$,
which is reached after about $t=166$ in our simulations.
The evolution of the energy is plotted in \figurename~\ref{fig_scalar_energy}-(left)
and that of the supremum norm of $u$ in \figurename~\ref{fig_scalar_maxmod}-(left).
The simulation results subject to the homogeneous Neumann boundary condition
are presented in \figurename~\ref{fig_scalar_neu},
where the same homogeneous steady state is reached after about $t=547$.
\figurename~\ref{fig_scalar_energy}-(right) and \figurename~\ref{fig_scalar_maxmod}-(right)
show the corresponding evolutions of the energy and the supremum norm respectively.
We observe that the energy decreases monotonically under both boundary conditions
and the MBP is also preserved perfectly so that the solution is always located in the interval $(-1,1)$.

\begin{figure}[!ht]
\centerline{
\hspace{-0.3cm}
\includegraphics[width=0.34\textwidth]{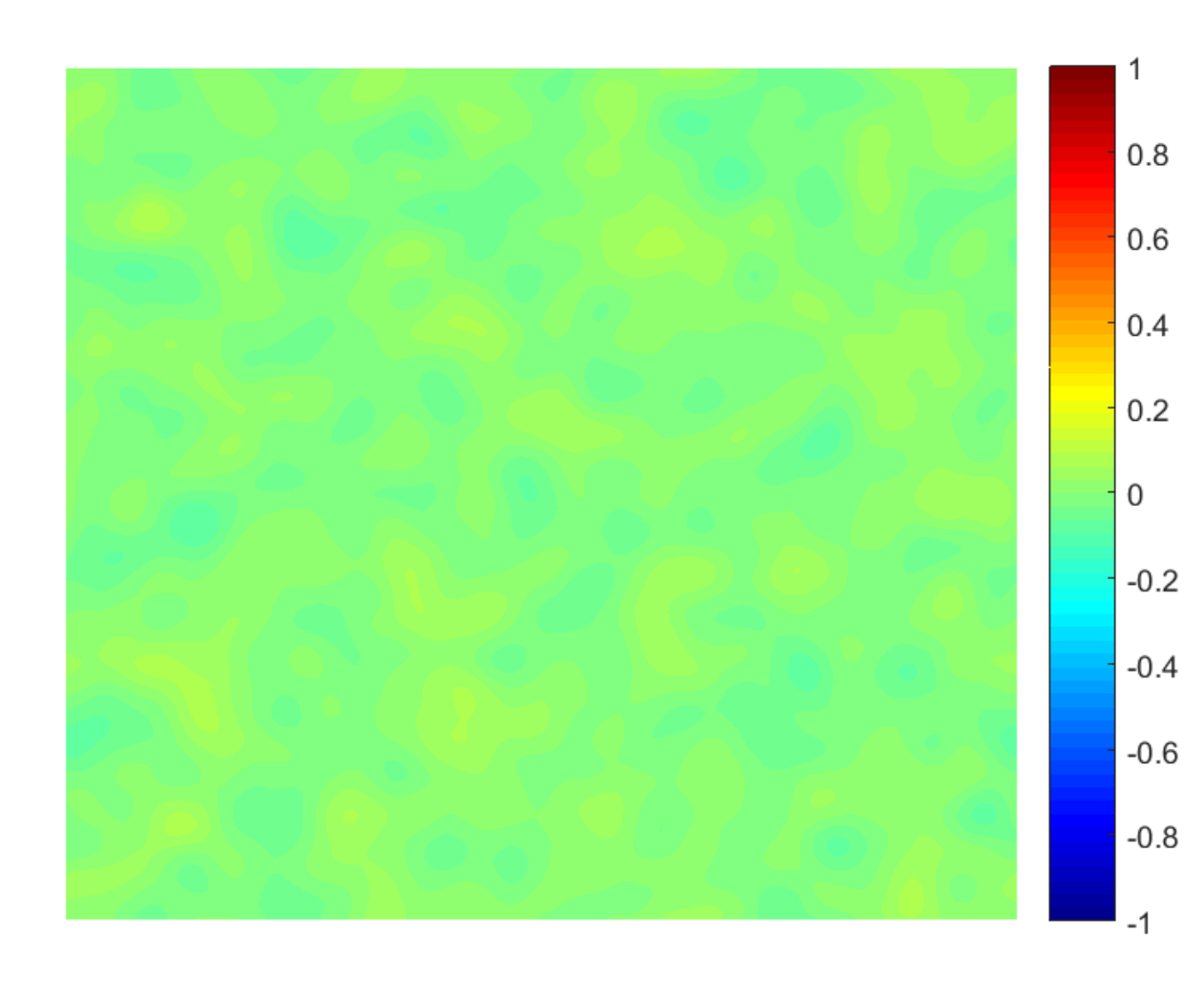}\hspace{-0.2cm}
\includegraphics[width=0.34\textwidth]{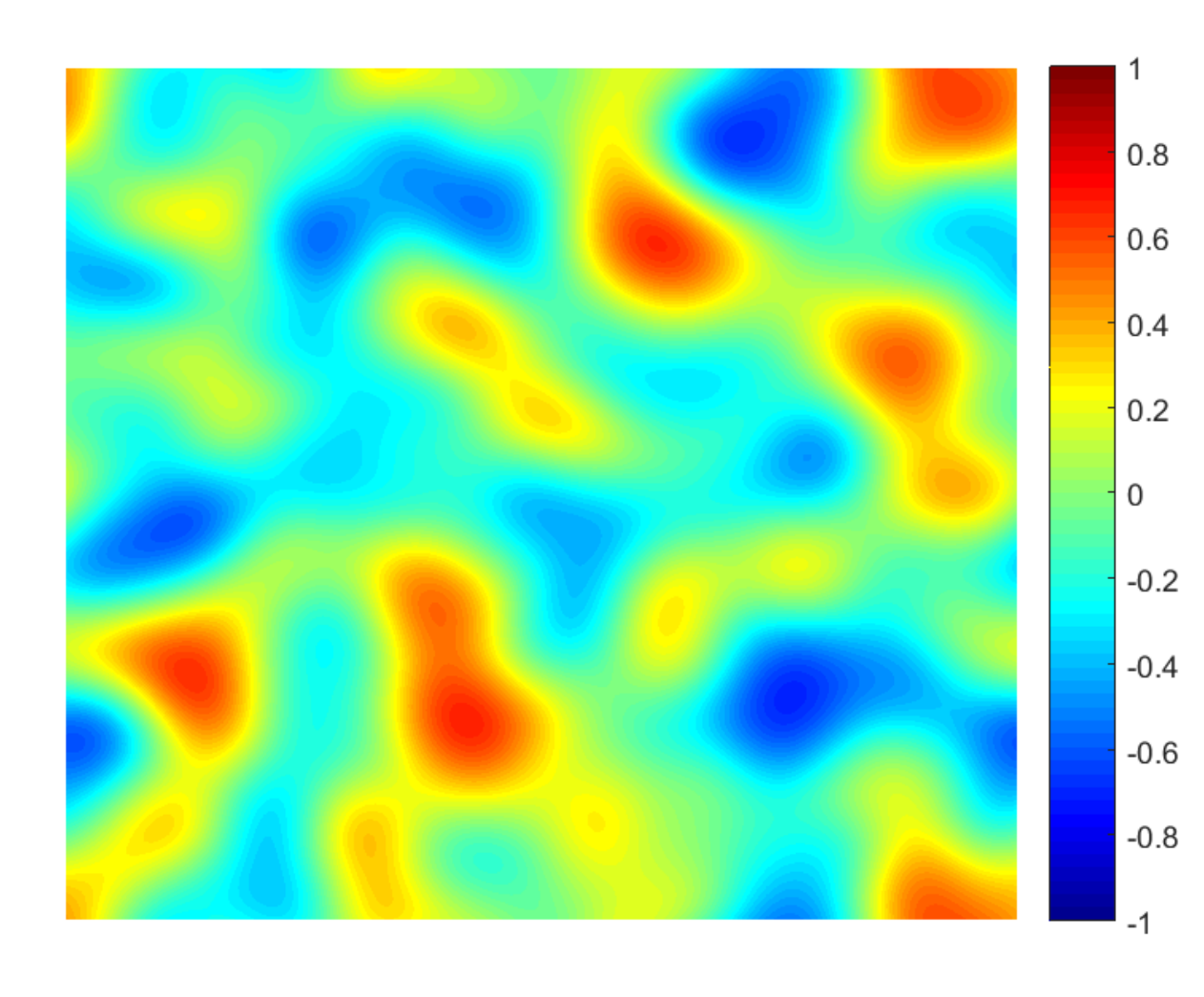}\hspace{-0.2cm}
\includegraphics[width=0.34\textwidth]{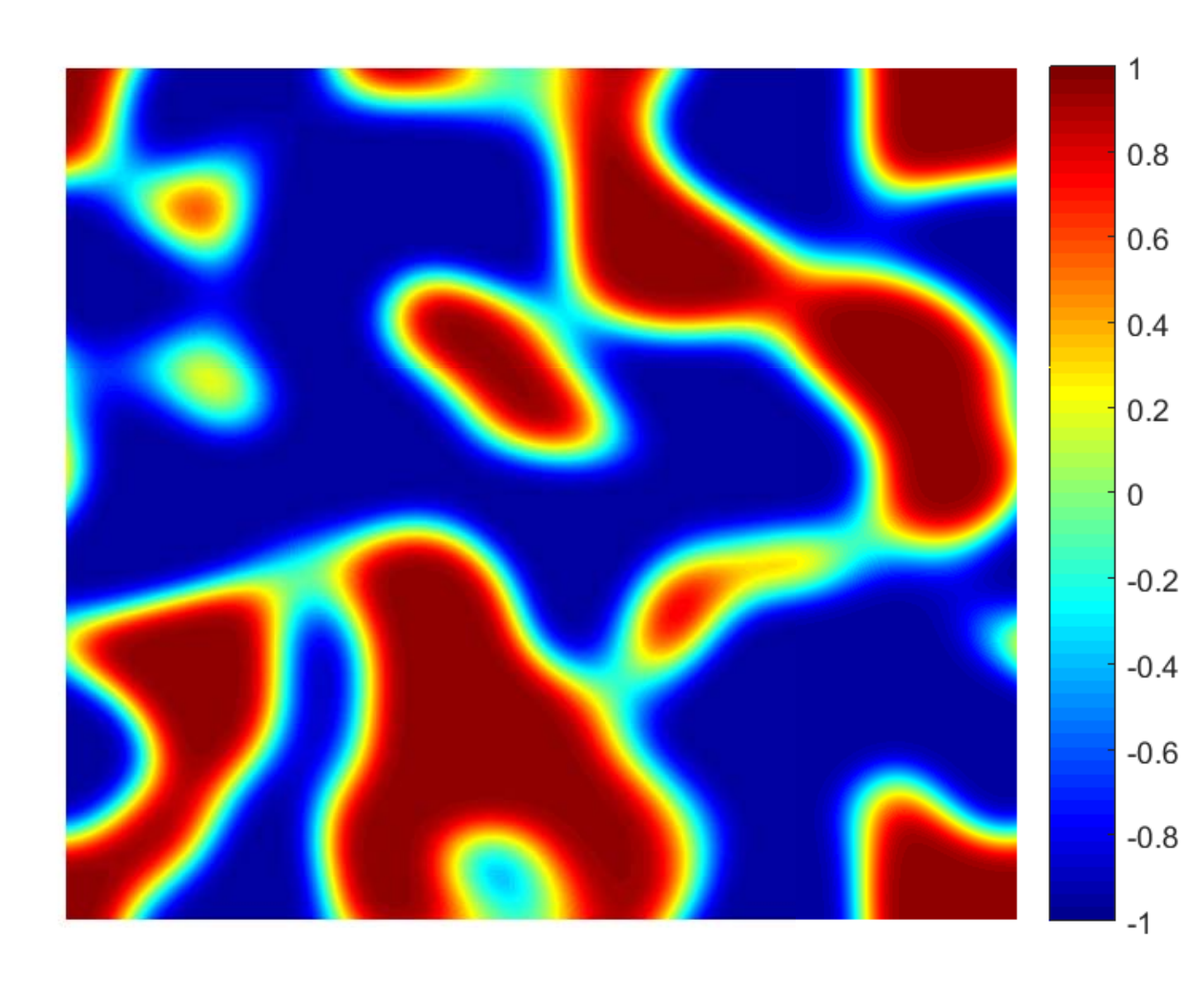}}
\vspace{-0.2cm}
\centerline{
\hspace{-0.3cm}
\includegraphics[width=0.34\textwidth]{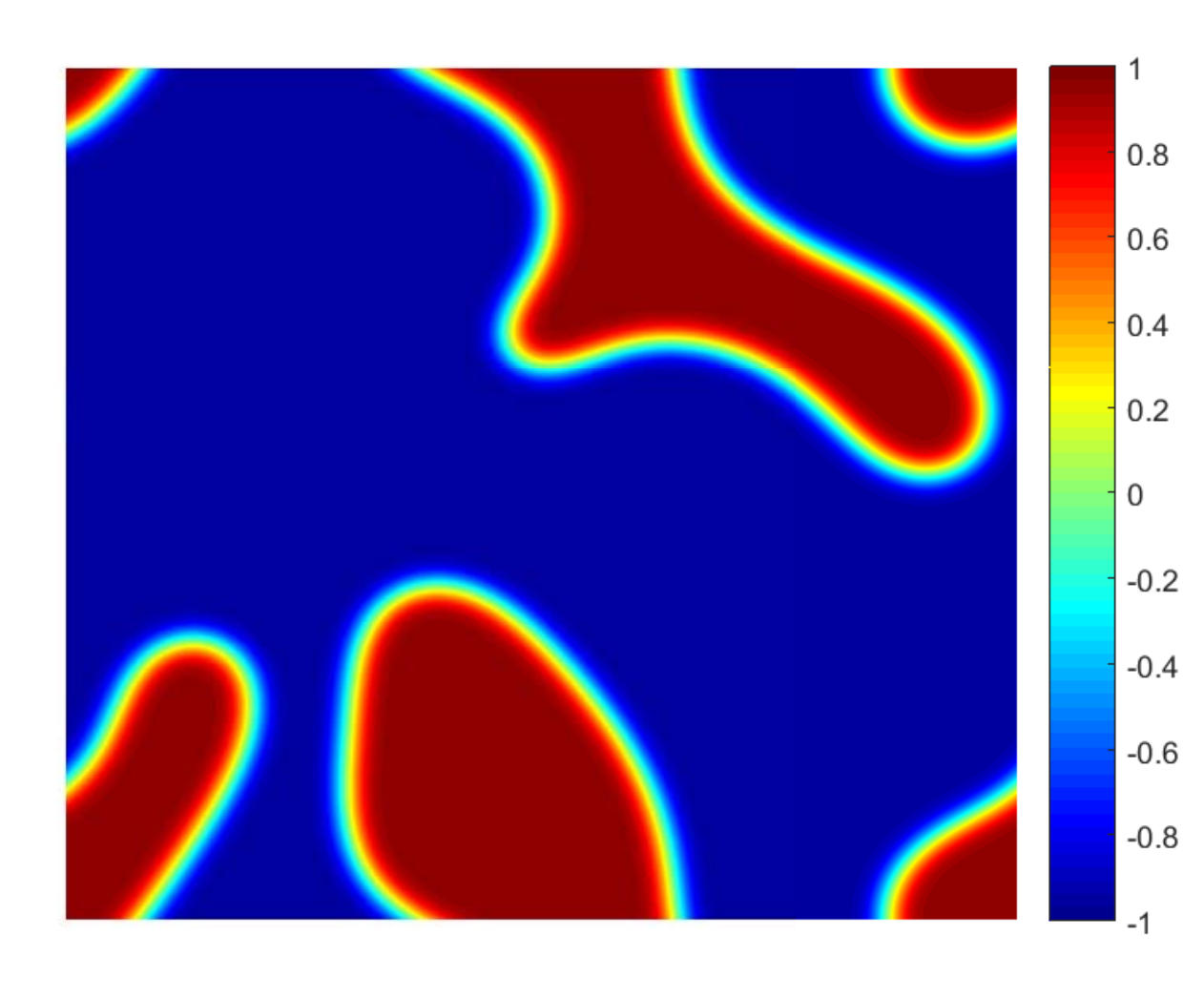}\hspace{-0.2cm}
\includegraphics[width=0.34\textwidth]{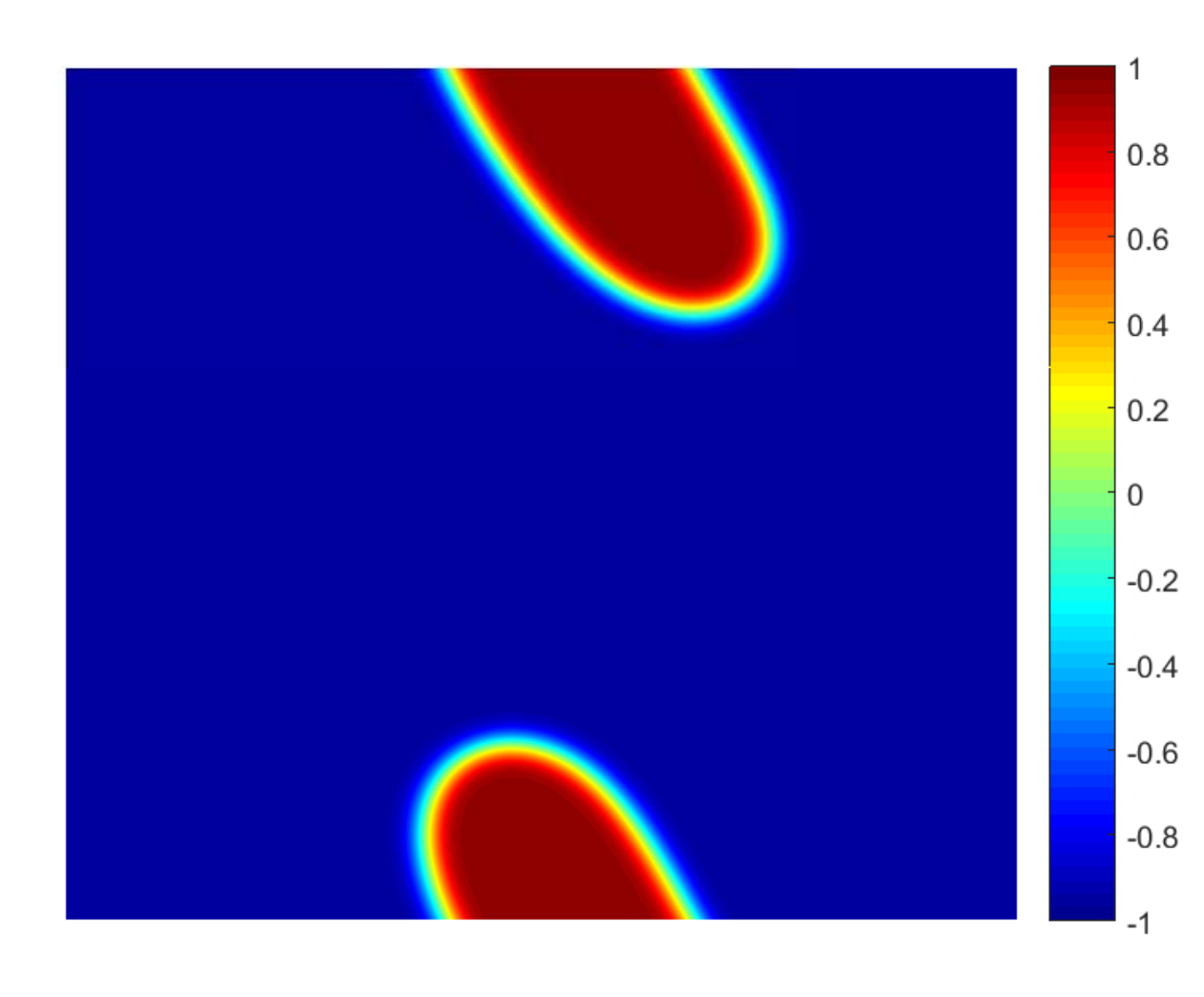}\hspace{-0.2cm}
\includegraphics[width=0.34\textwidth]{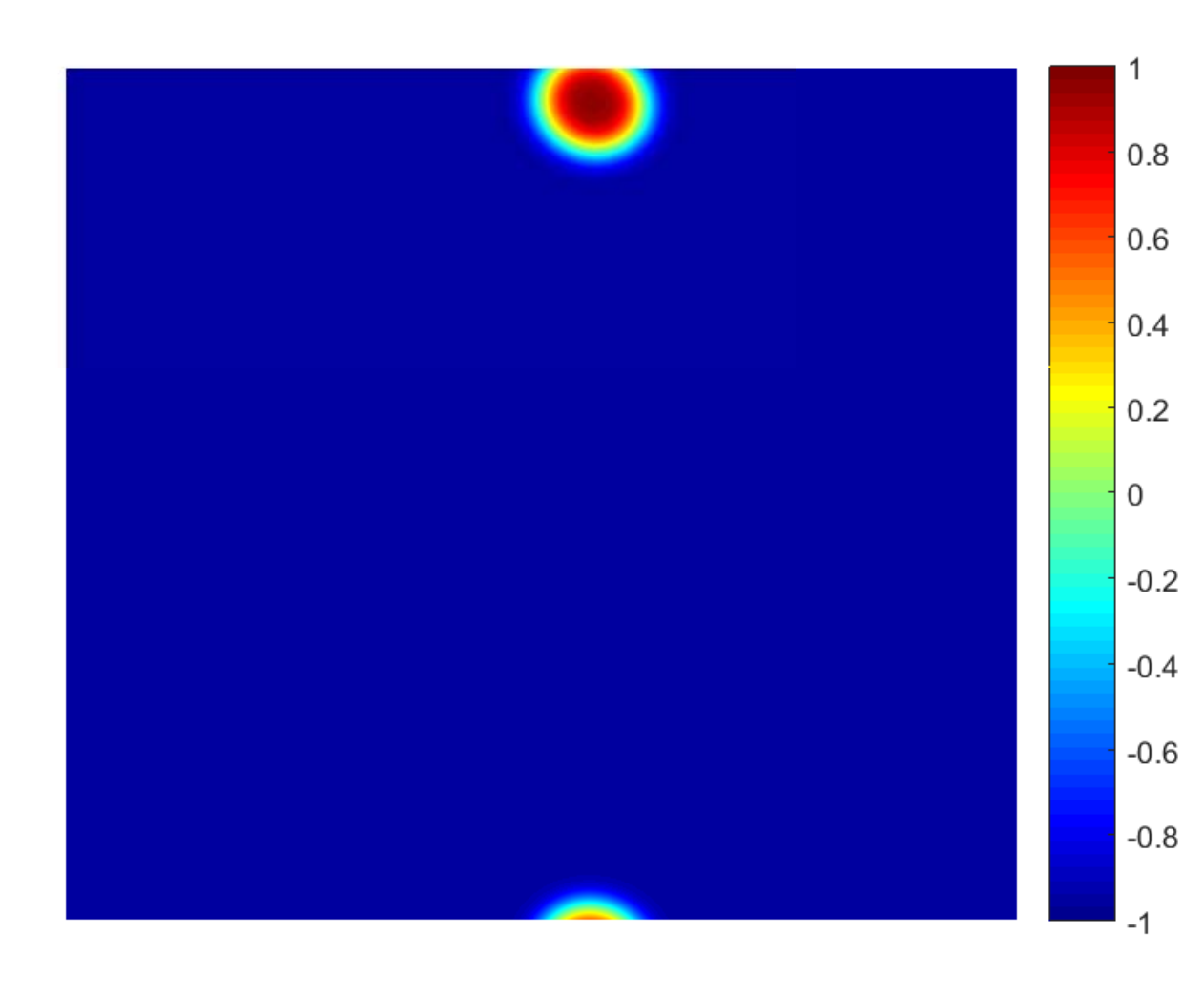}}
\vspace{-0.2cm}
\caption{The simulated solution $u$ subject to periodic boundary condition
at $t=1$, $5$, $8$, $30$, $100$, and $160$ respectively
(left to right and top to bottom) in Example \ref{eg_num1}.}
\label{fig_scalar_per}
\end{figure}

\begin{figure}[!ht]
\centerline{
\hspace{-0.3cm}
\includegraphics[width=0.34\textwidth]{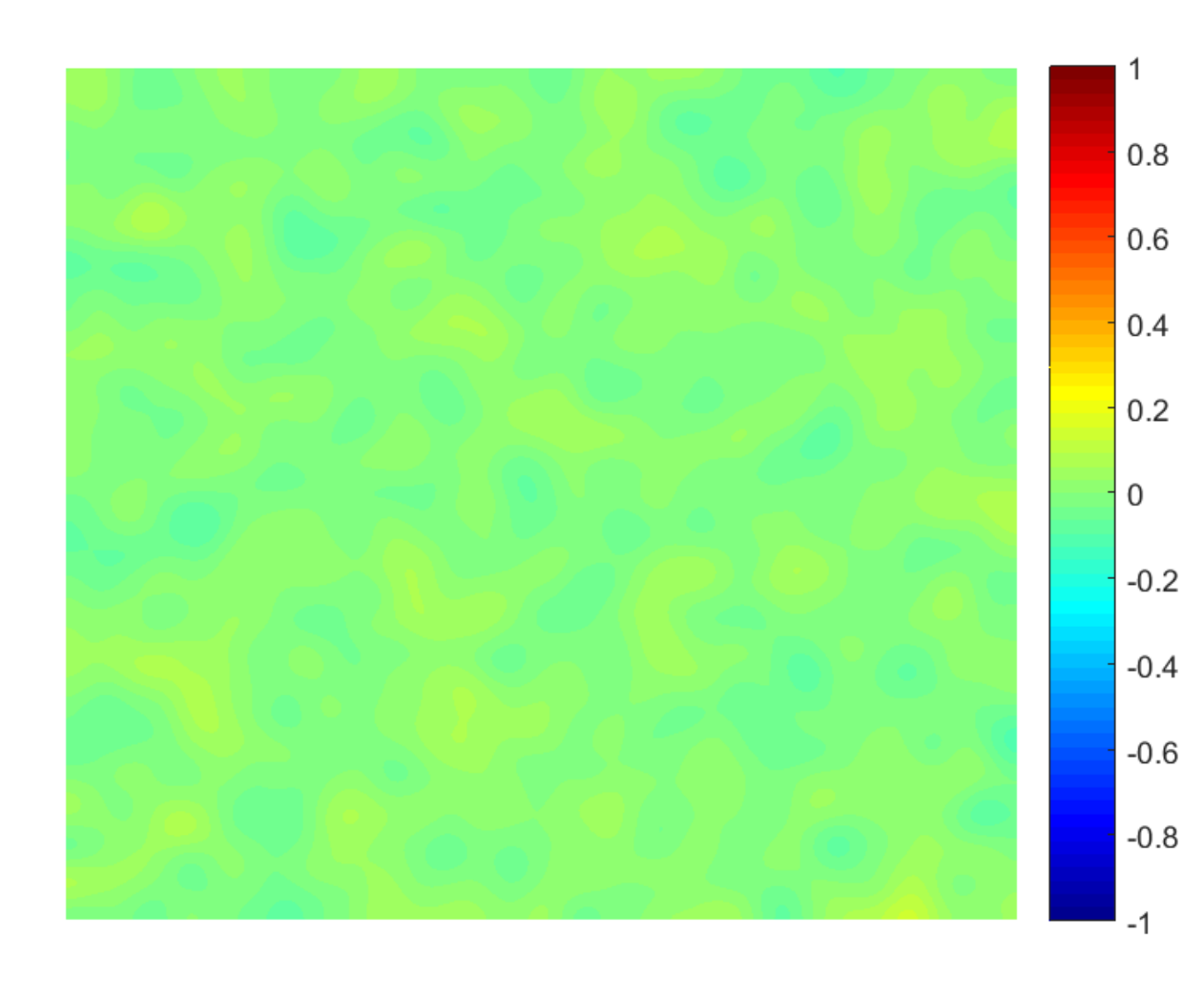}\hspace{-0.2cm}
\includegraphics[width=0.34\textwidth]{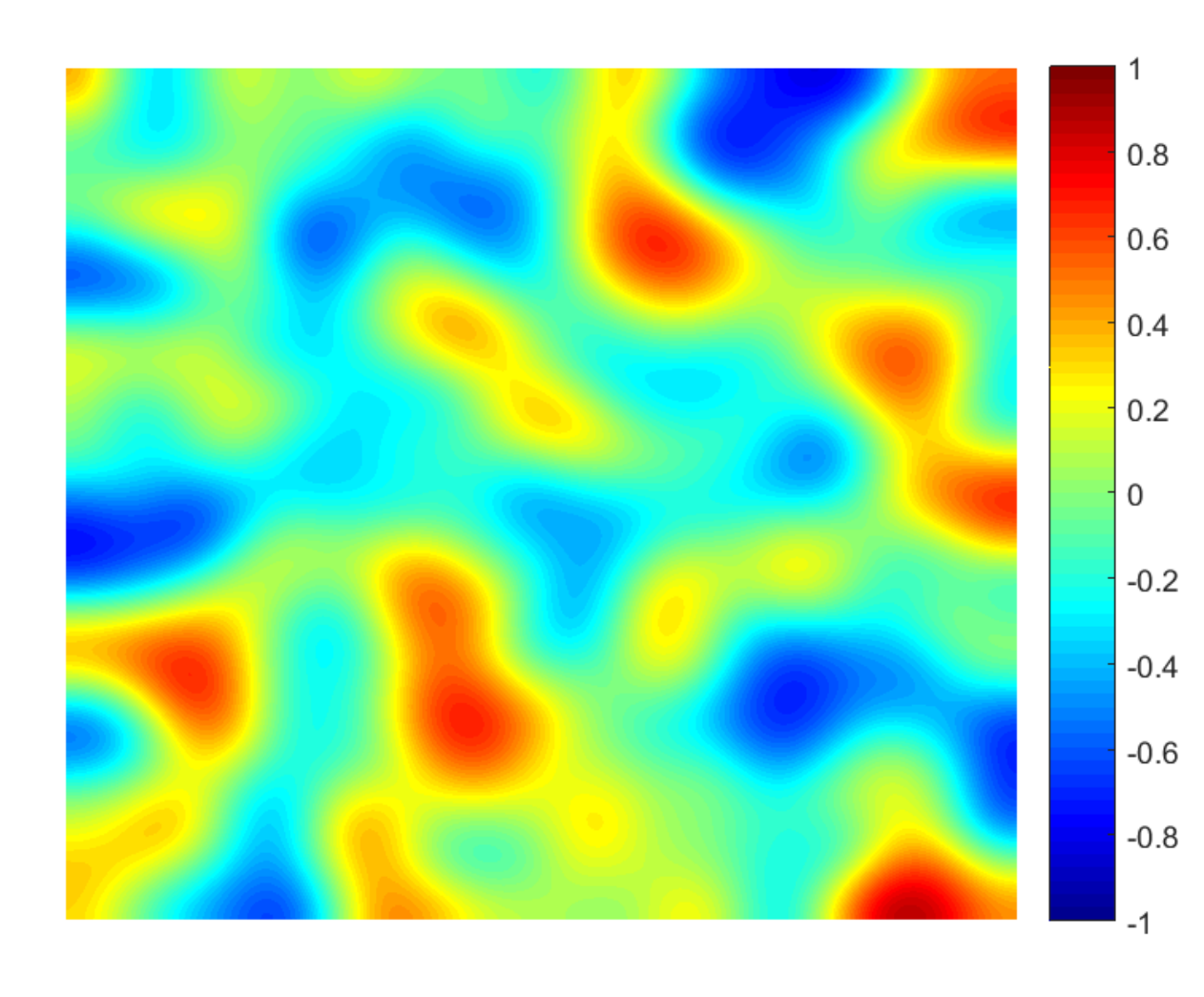}\hspace{-0.2cm}
\includegraphics[width=0.34\textwidth]{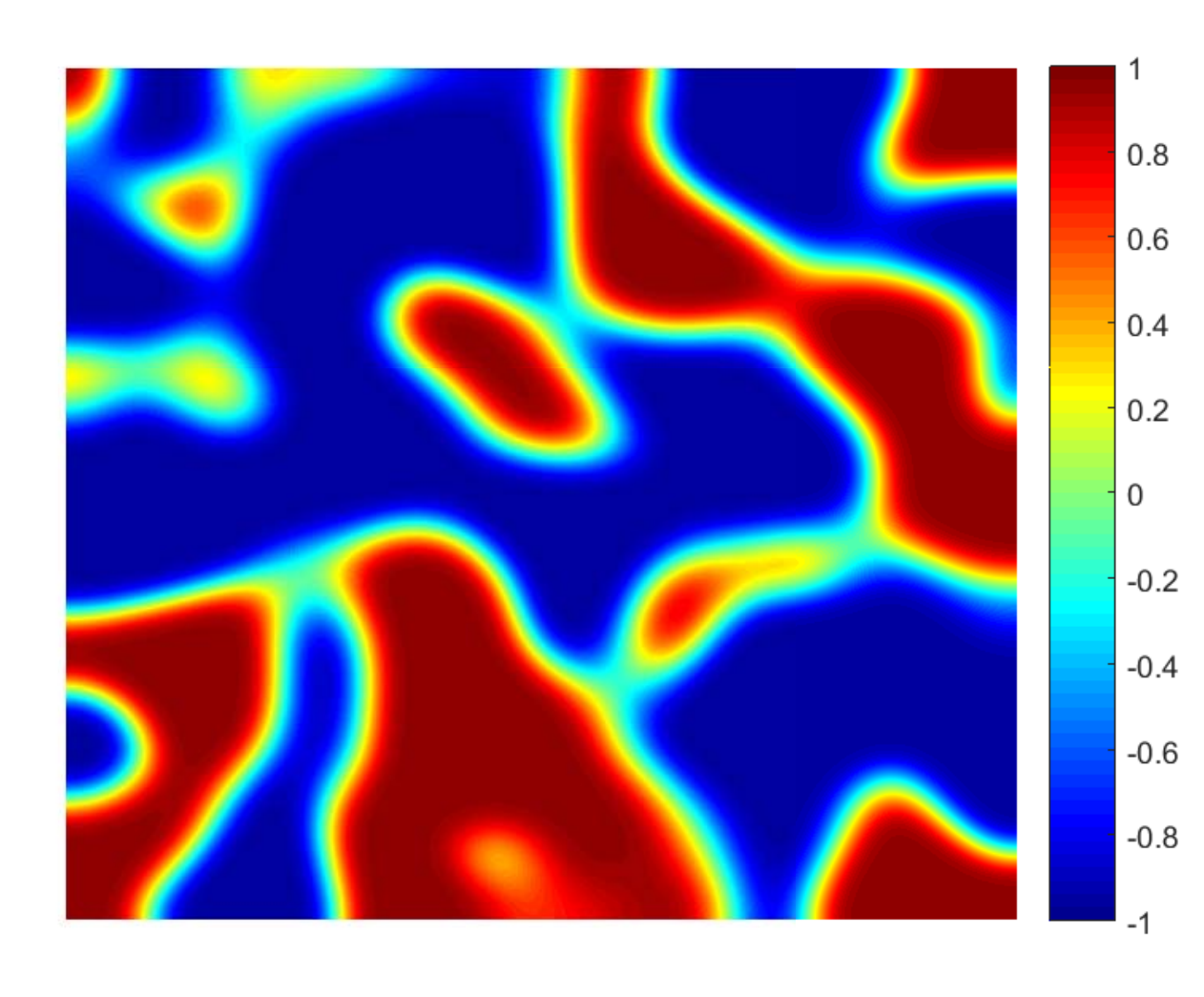}}
\vspace{-0.2cm}
\centerline{
\hspace{-0.3cm}
\includegraphics[width=0.34\textwidth]{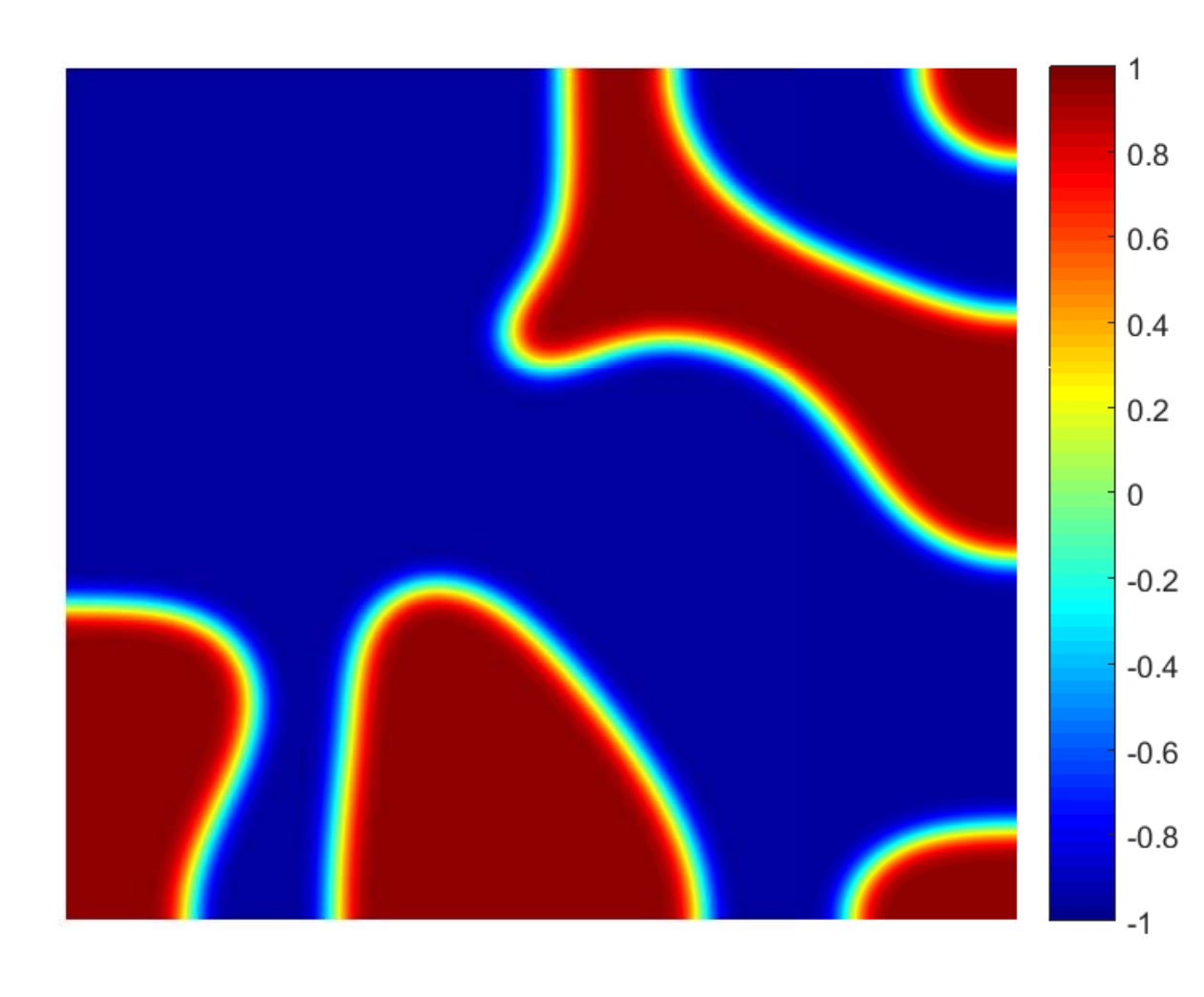}\hspace{-0.2cm}
\includegraphics[width=0.34\textwidth]{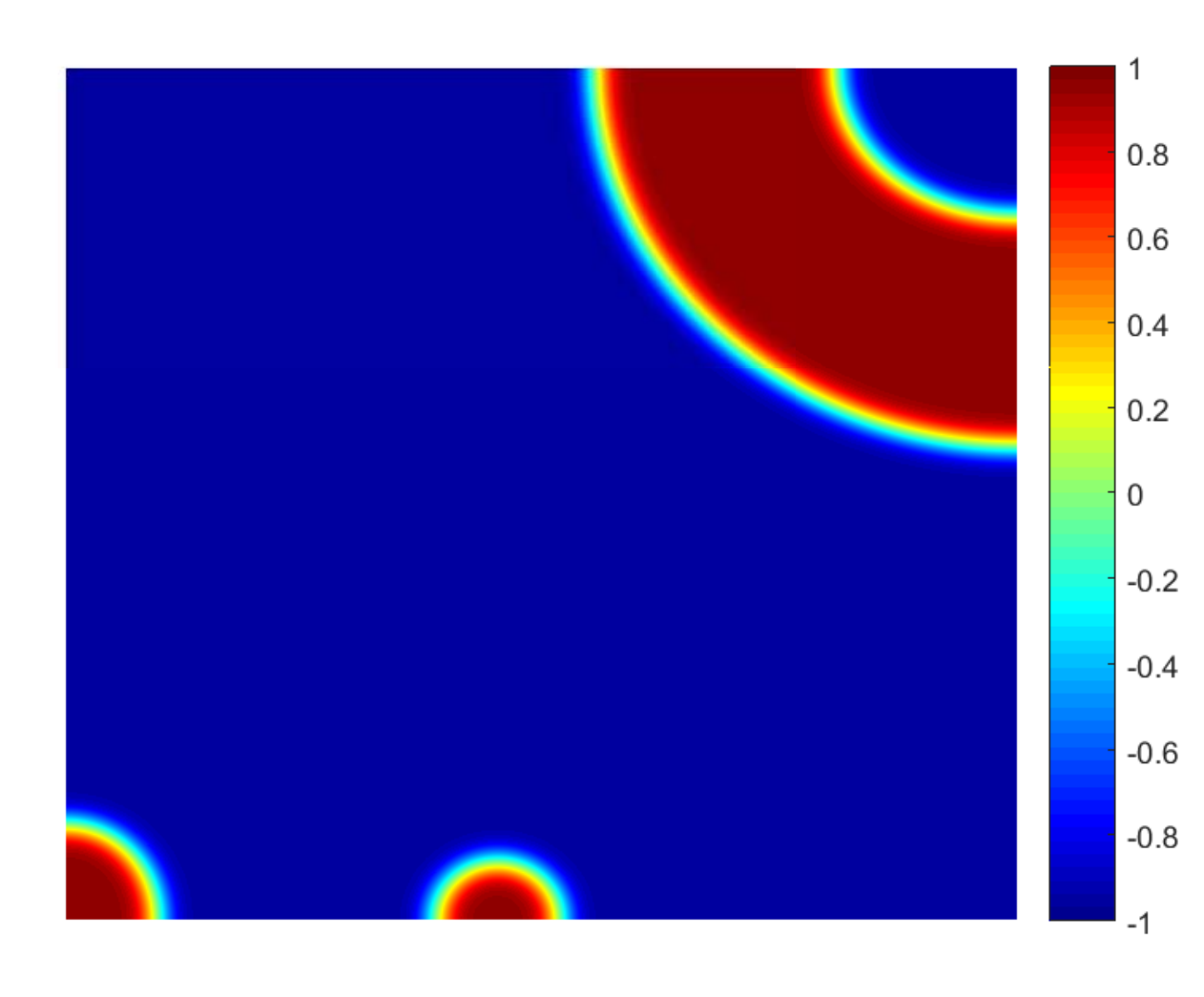}\hspace{-0.2cm}
\includegraphics[width=0.34\textwidth]{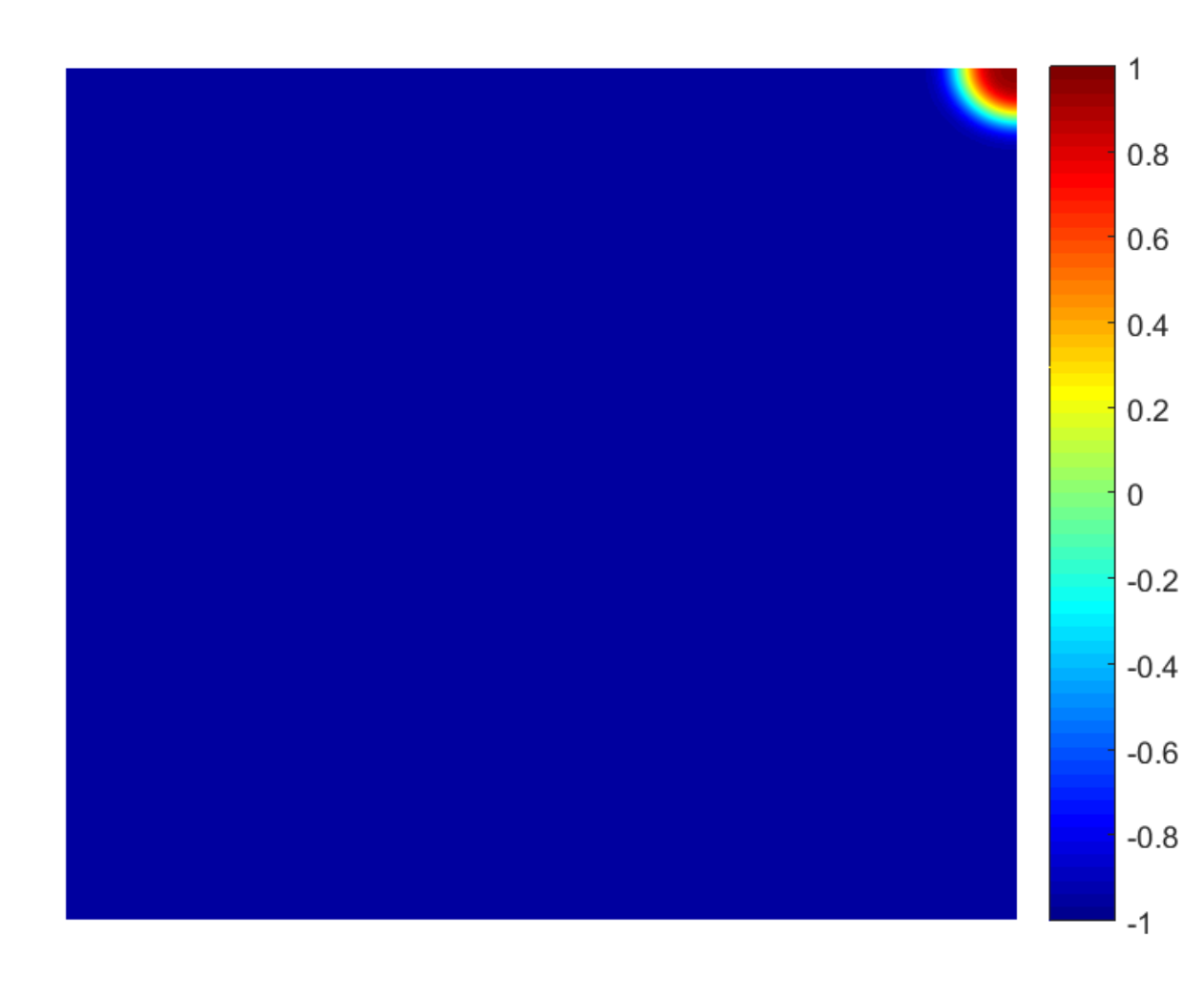}}
\vspace{-0.2cm}
\caption{The simulated solution $u$  subject to the homogeneous Neumann boundary condition
at $t=1$, $5$, $8$, $30$, $160$, and $540$ respectively
(left to right and top to bottom) in Example \ref{eg_num1}.}
\label{fig_scalar_neu}
\end{figure}

\begin{figure}[!ht]
\centerline{\hspace{-0.1cm}
\includegraphics[width=0.54\textwidth]{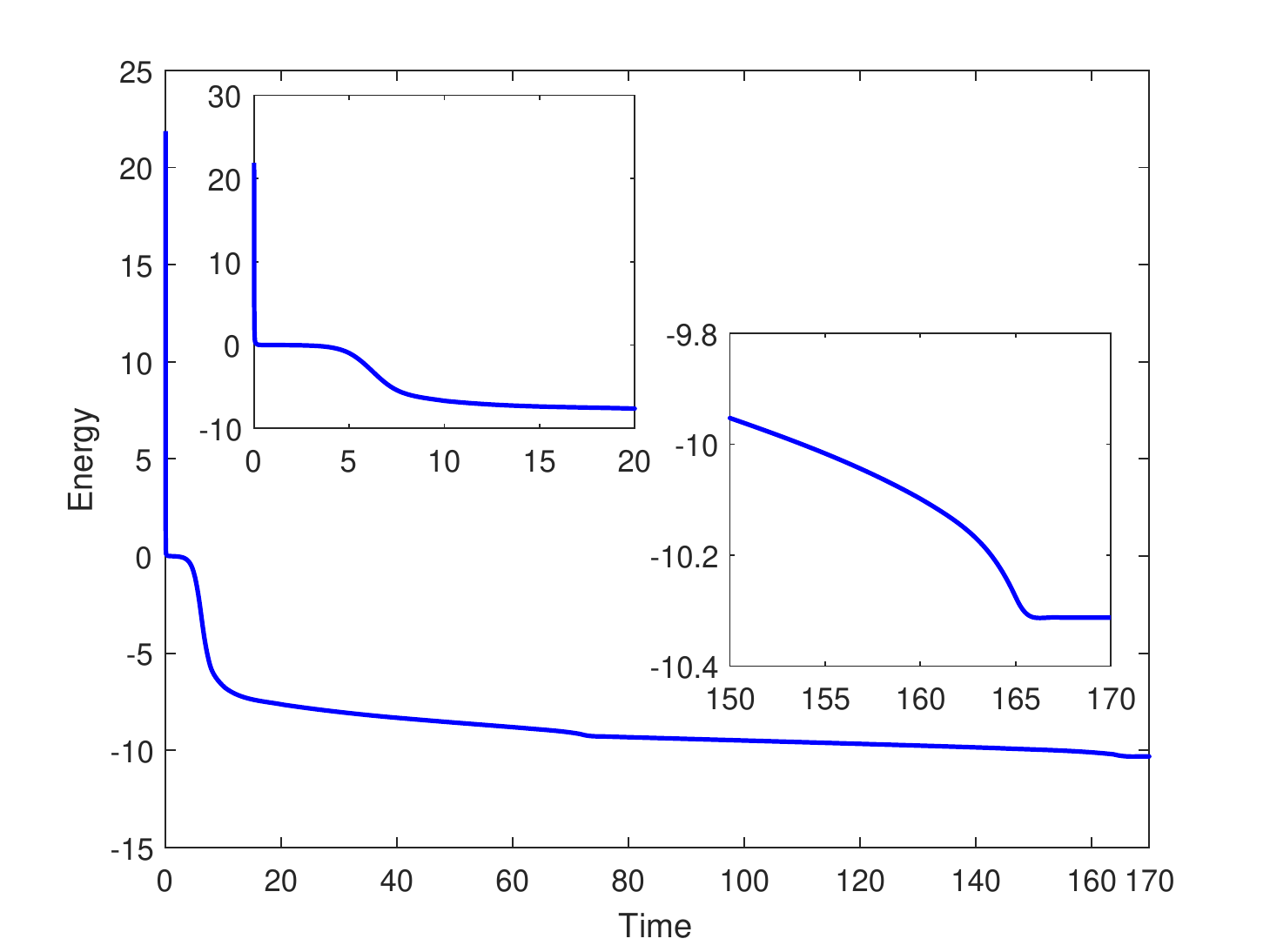}\hspace{-0.5cm}
\includegraphics[width=0.54\textwidth]{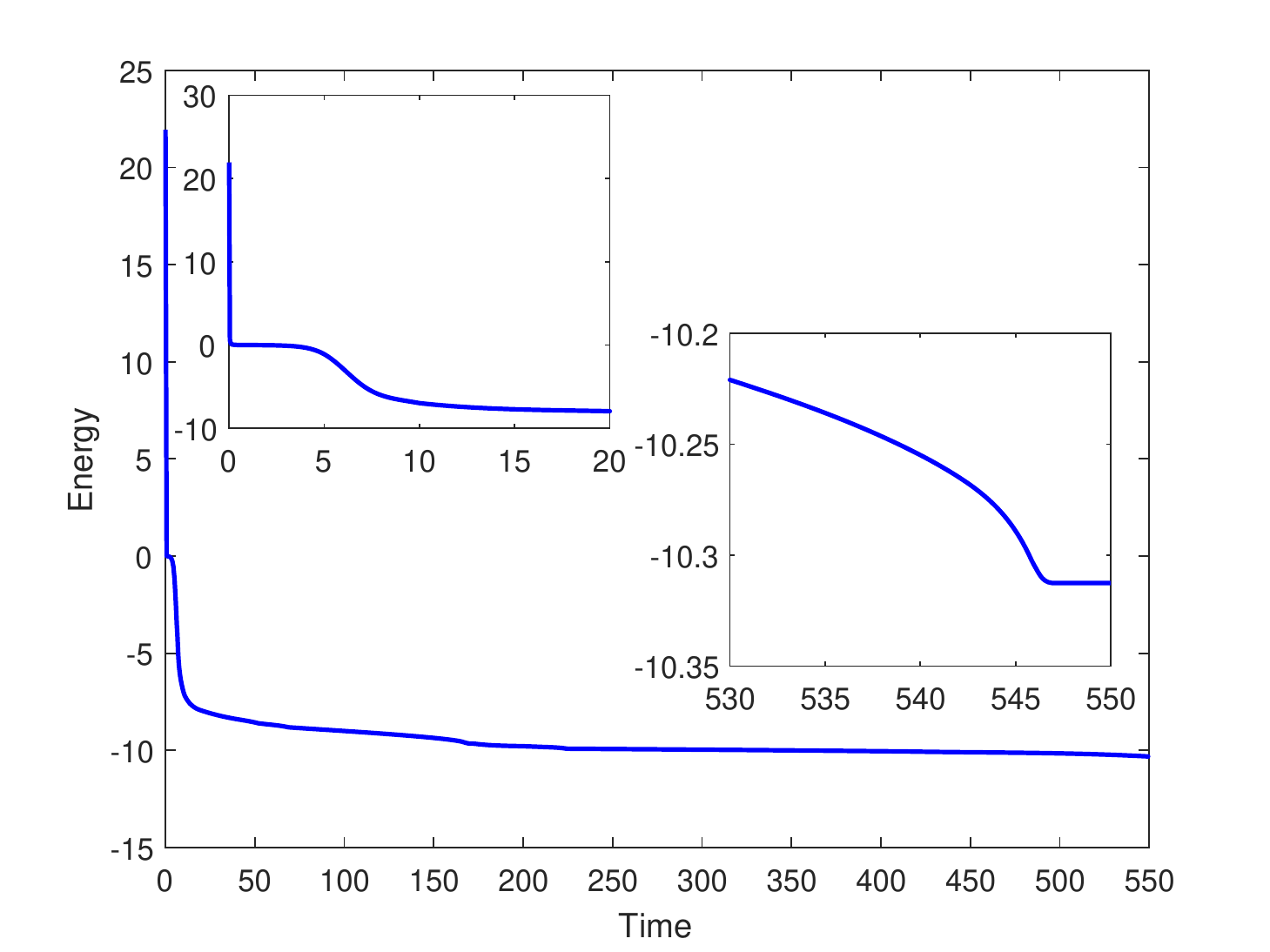}}
\vspace{-0.2cm}
\caption{Evolutions of the energy subject to the periodic  (left)
and homogeneous Neumann (right) boundary conditions respectively in Example \ref{eg_num1}.}
\label{fig_scalar_energy}
\end{figure}

\begin{figure}[!ht]
\centerline{\hspace{-0.1cm}
\includegraphics[width=0.54\textwidth]{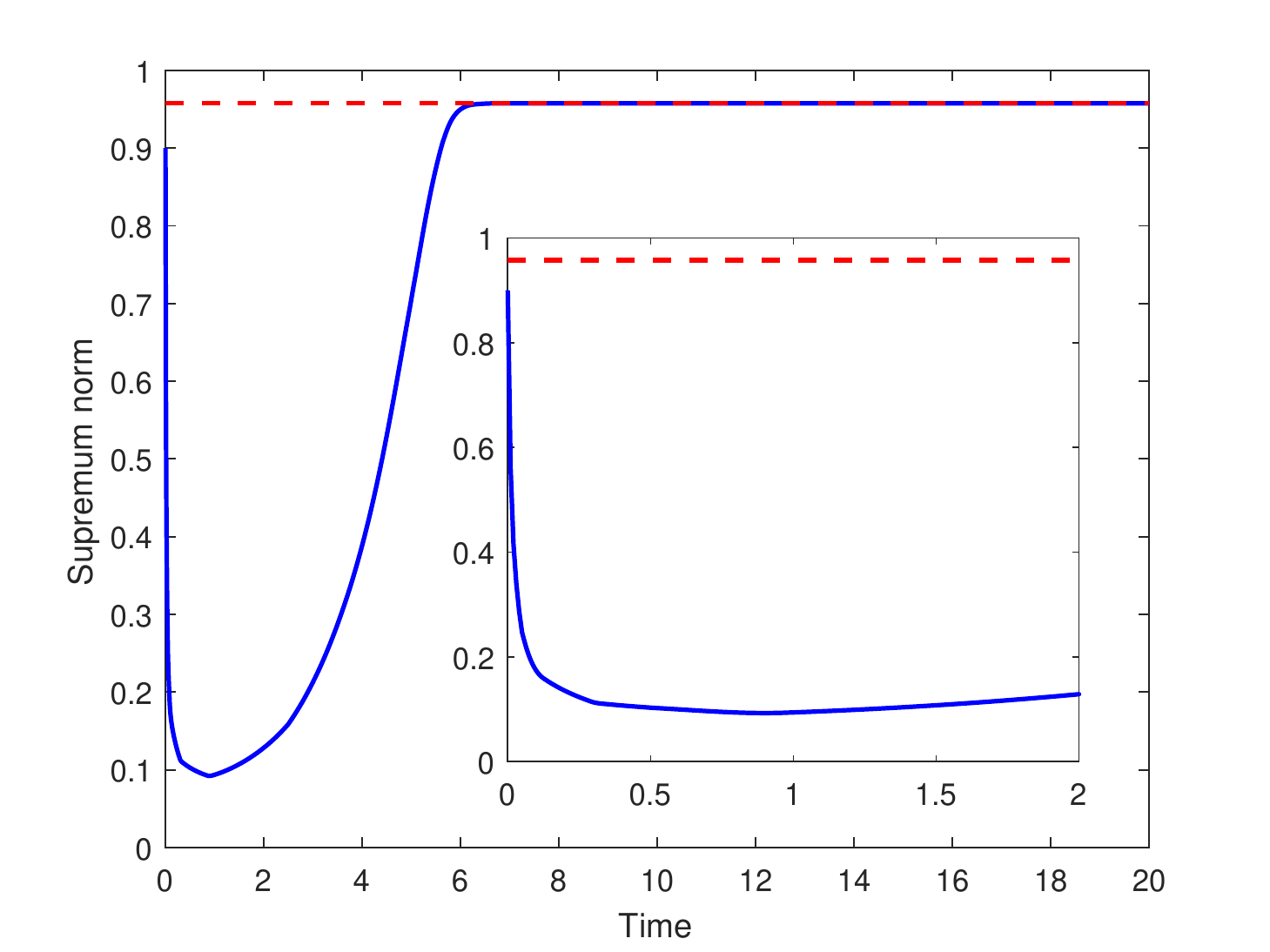}\hspace{-0.5cm}
\includegraphics[width=0.54\textwidth]{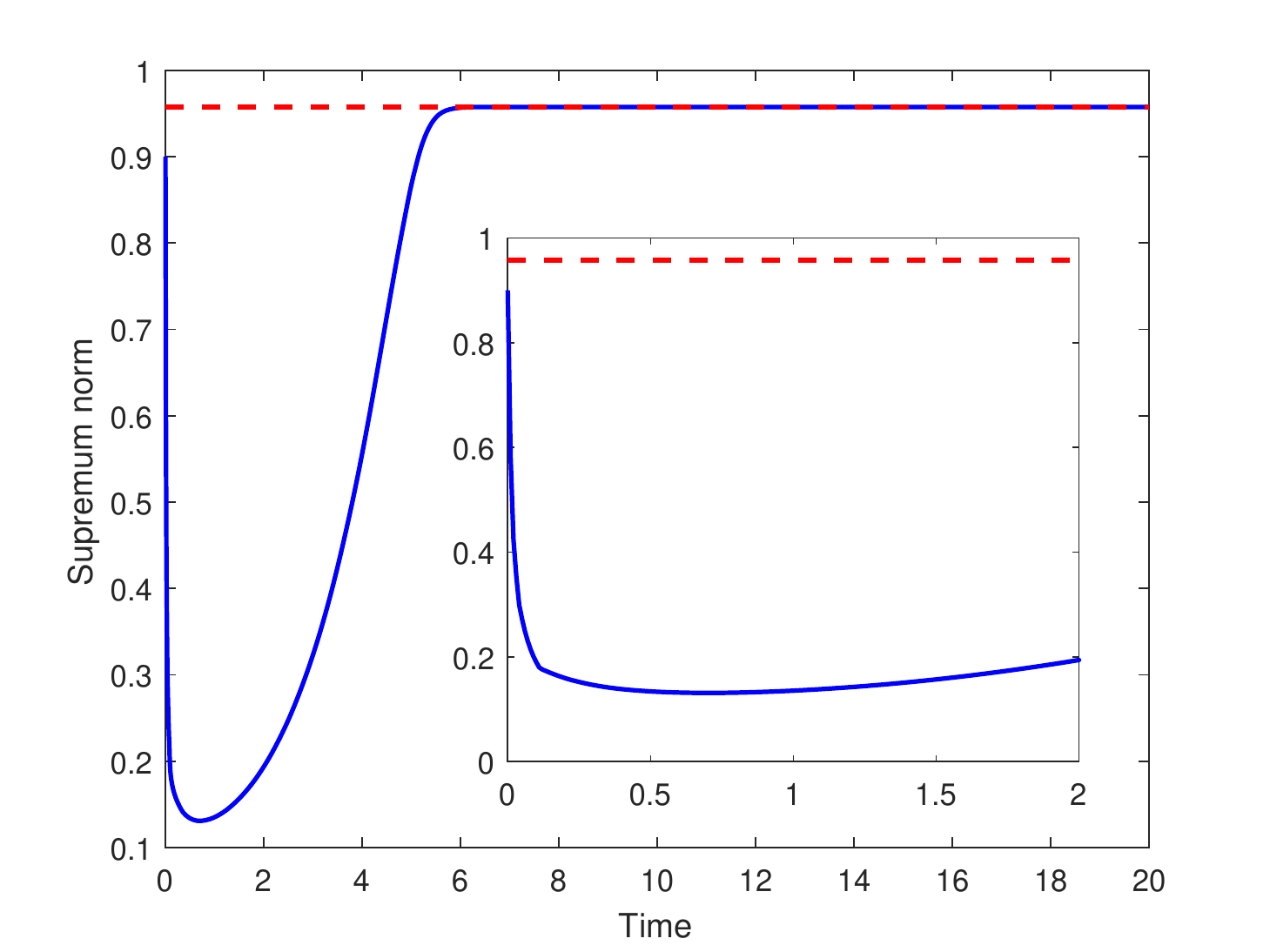}}
\vspace{-0.2cm}
\caption{Evolutions of the supremum  norm of the simulated solution $u$
subject to the periodic (left) and homogeneous Neumann (right) boundary conditions
respectively in Example \ref{eg_num1}.}
\label{fig_scalar_maxmod}
\end{figure}

\begin{example}
\label{eg_num2}
Consider the vector-valued equation \eqref{vector_AllenCahn}
of the unknown vector field $\bu:\Omega\subset\R^2\to\R^2$
with $\Delta$ replaced by $0.005\Delta$.
We test the domain
$$\Omega = \{(x,y)\in\R^2\,|\,\sqrt{x^2+y^2}<1\} \setminus \{(x,y)\in\R^2\,|\,\sqrt{(x-0.2)^2+y^2}\le0.5\},$$
that is, a region inside the unit disk but outside a circle with a shifted center.
The Dirichlet boundary condition is set to be
\[
\bu(x,y)=
\begin{dcases}
(x, y), & \text{if } \sqrt{x^2+y^2}=1, \\
(2x-0.4, -2y), & \text{if } \sqrt{(x-0.2)^2+y^2}=0.5,
\end{dcases}
\]
i.e., the values of the vector field $\bu$ on the outside boundary
are always fixed to be a unit vector in the direction of $(x,y)$
and  on the inside boundary be a unit vector in the direction of $(x,-y)$.
Thus the winding number of the boundary is $2$.
\end{example}

We adopt the $C^0$ finite element spatial discretization with piecewise linear basis functions on triangular meshes
and the mass-lumping technique in this example.
The stabilizing constant is set to be $\kp=2$.
PHIPM \cite{NiWr12} is used for computing linear combinations
of the products of the $\varphi$-functions with vectors in the ETDRK2 scheme.
We generate a triangular mesh with $2210$ nodes and $4158$ elements for the domain $\Omega$
and the initial configuration of the vector field $\bu$ on the interior nodes with the fixed magnitude $0.9$
but random directions according to a uniform distribution.
\figurename~\ref{fig_vector_fem1} shows the simulated vector fields
at $t=0.1$, $0.5$, $1.2$, $2.5$, $15$, and $100$ respectively.
We observe that the initial disordered state quickly transitions into
a more orderly structure which then asymptotically evolves to a steady state.
The obtained steady state of the vector field $\bu$ contain two vortices/defects which are symmetric with respect to the $x$-axis.
The evolution of the energy is plotted in the left graph of \figurename~\ref{fig_maxmod} for this example
and we see the energy decreases monotonically in time.
The right figure in \figurename~\ref{fig_maxmod}
presents the evolution of the maximum value of $|\bu|$ over  the interior nodes
and it demonstrates again that the MBP is perfectly preserved.

\begin{figure}[!ht]
\centerline{\hspace{-0.1cm}
\includegraphics[width=0.34\textwidth]{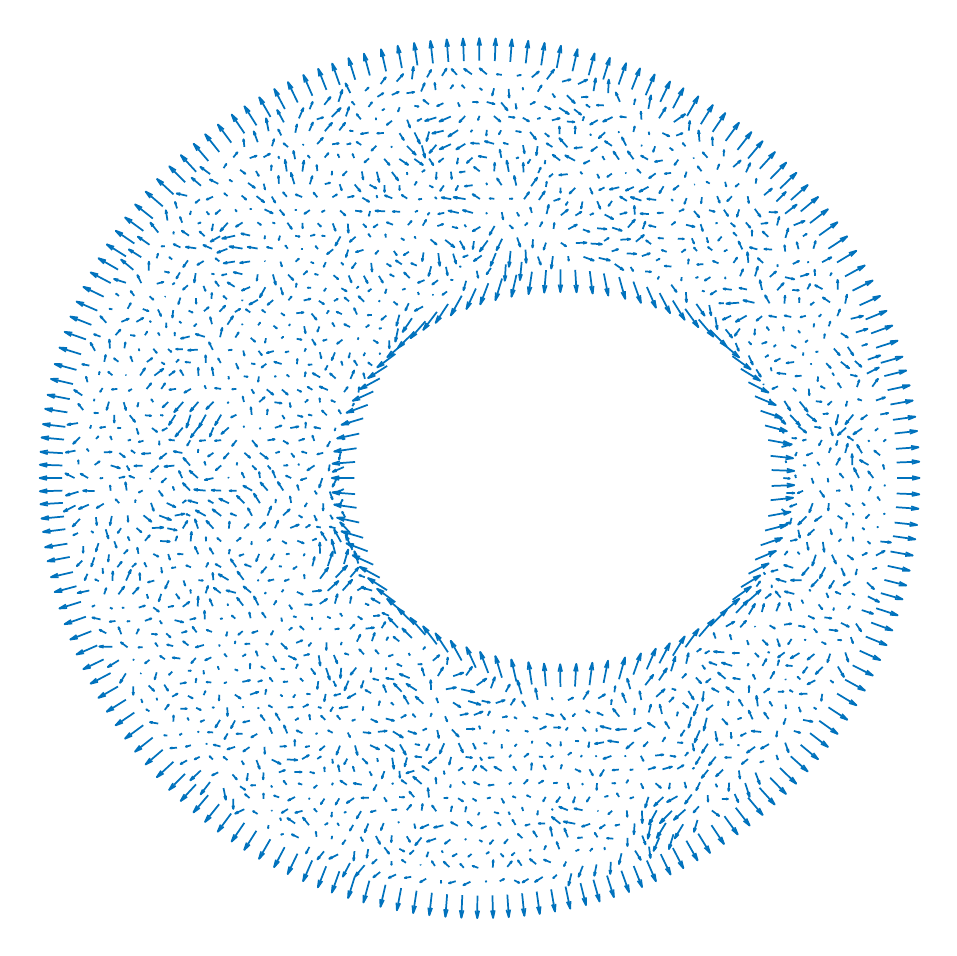}\hspace{-0.2cm}
\includegraphics[width=0.34\textwidth]{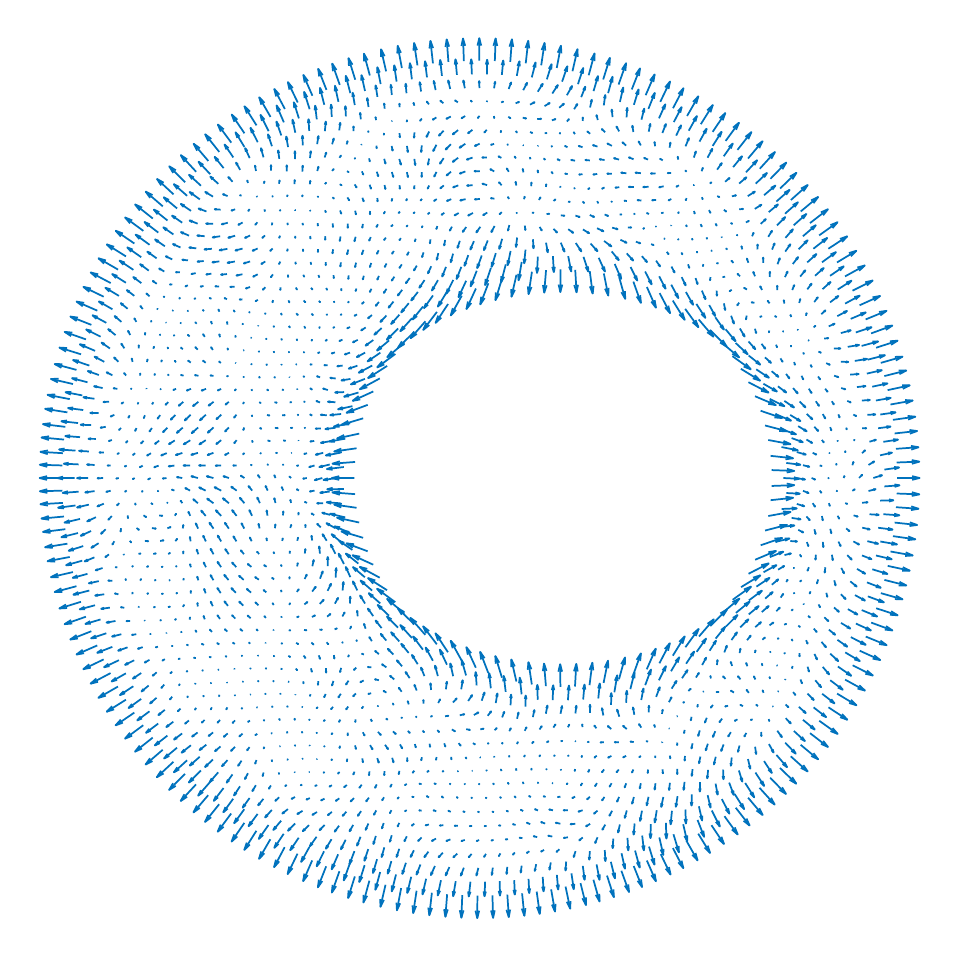}\hspace{-0.2cm}
\includegraphics[width=0.34\textwidth]{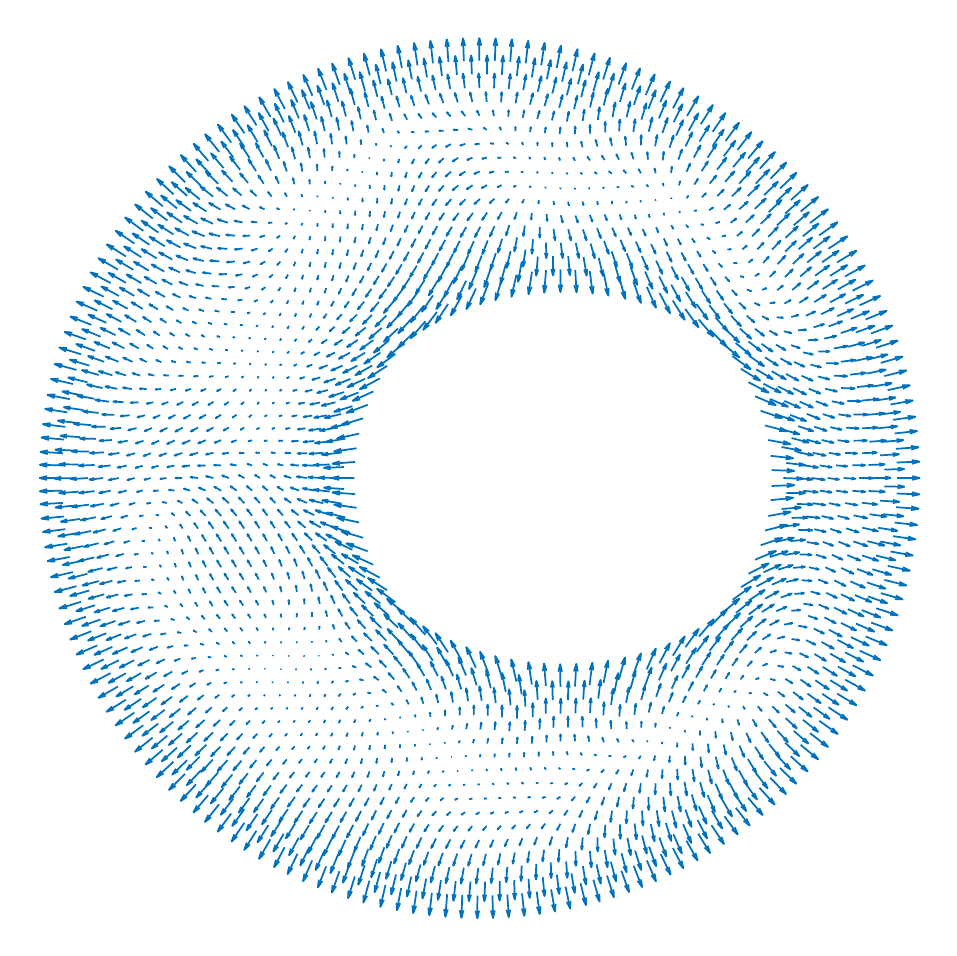}}
\centerline{\hspace{-0.1cm}
\includegraphics[width=0.34\textwidth]{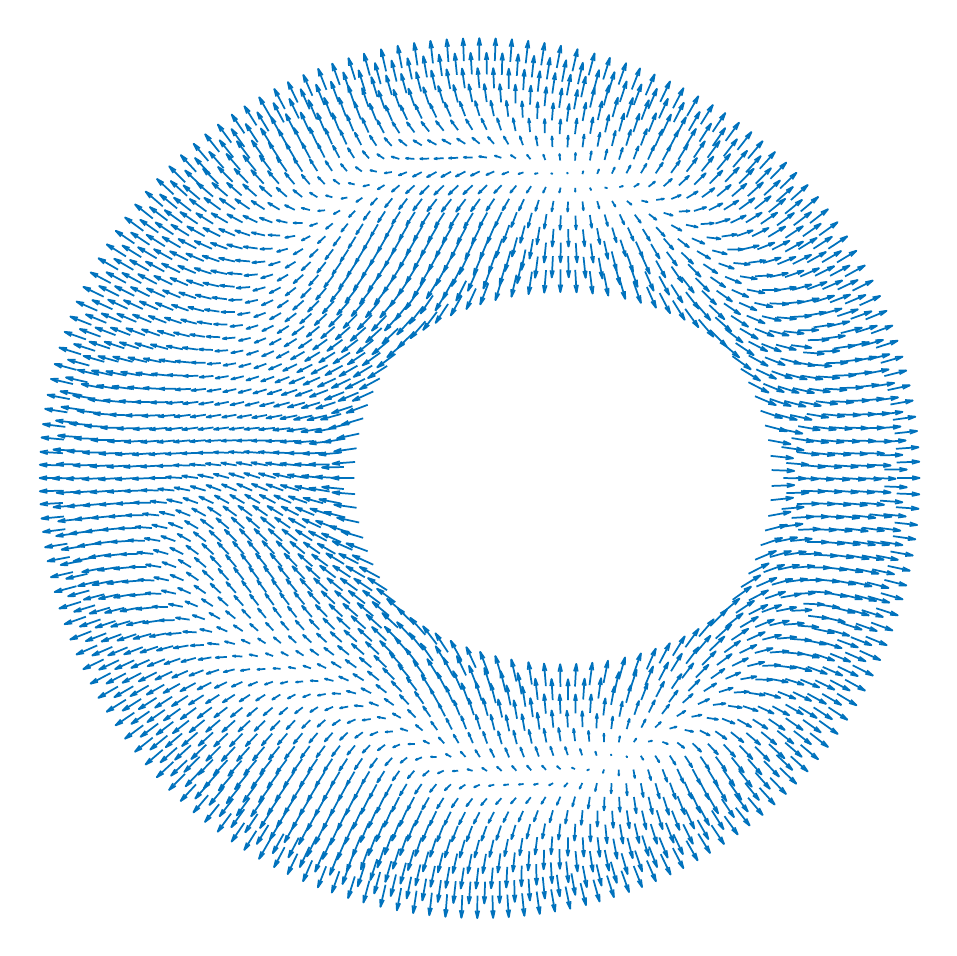}\hspace{-0.2cm}
\includegraphics[width=0.34\textwidth]{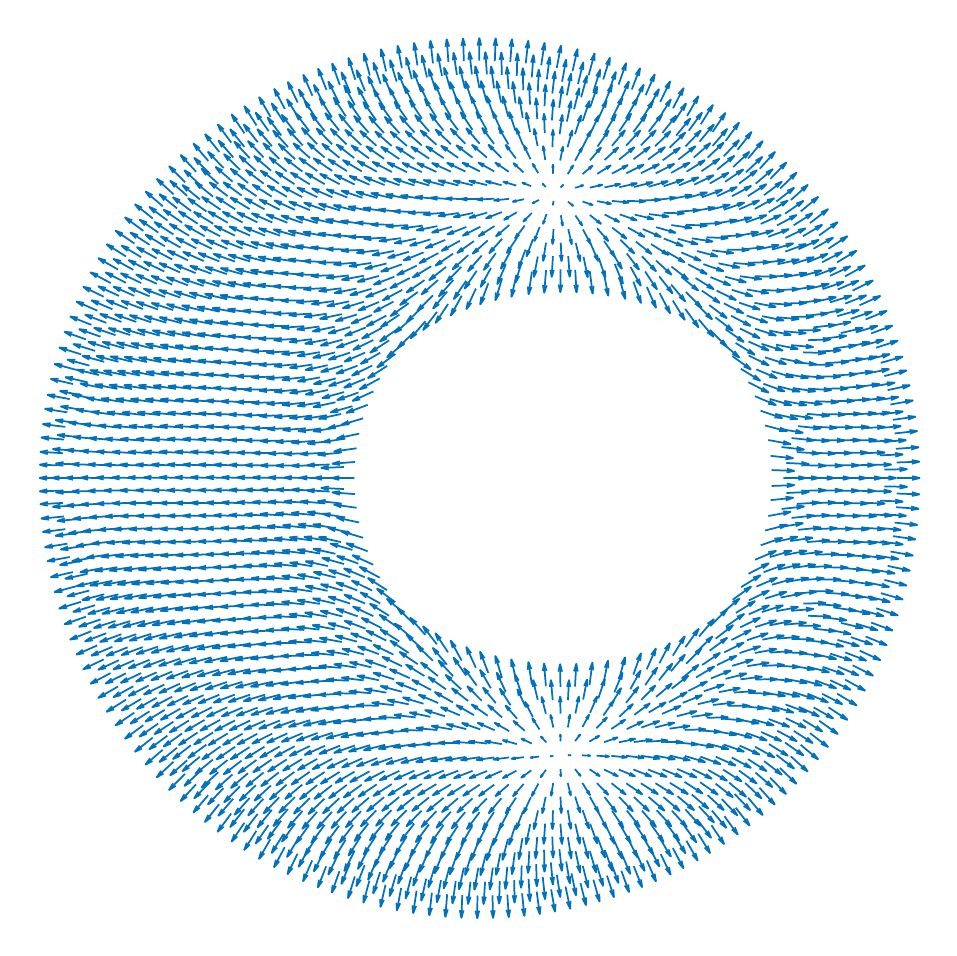}\hspace{-0.2cm}
\includegraphics[width=0.34\textwidth]{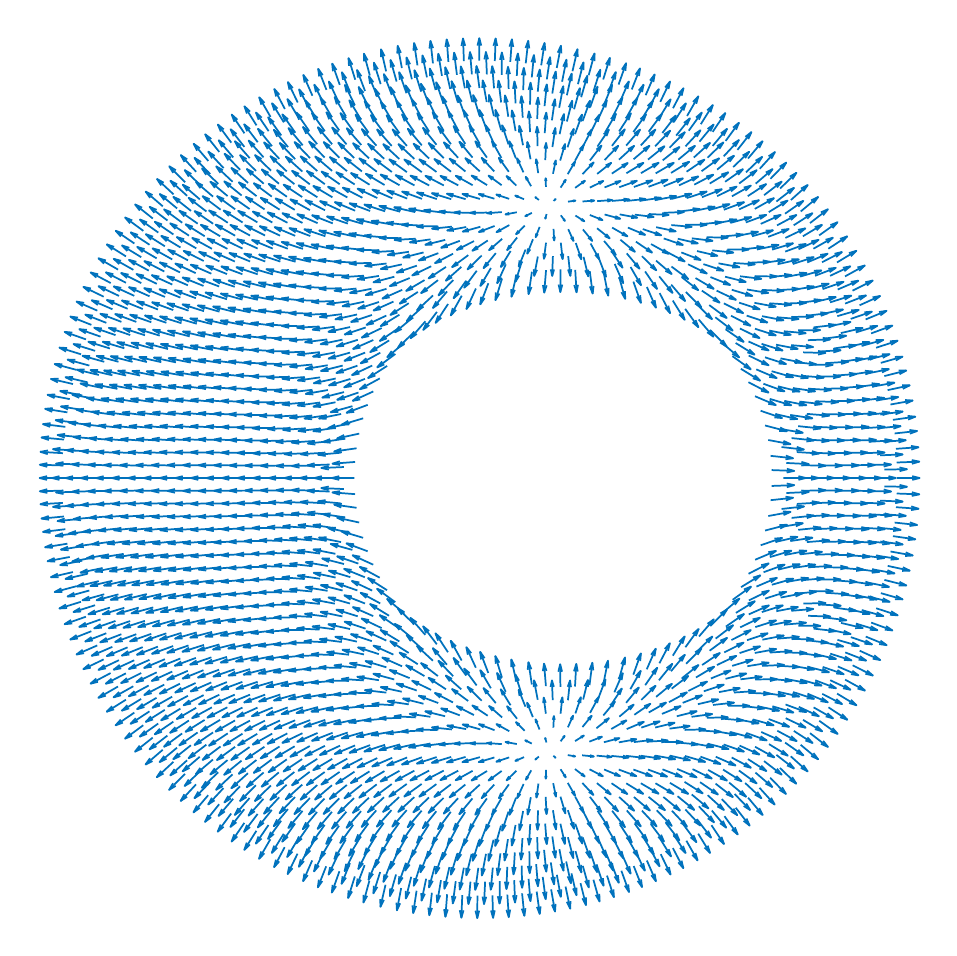}}
\vspace{-0.2cm}
\caption{The simulated vector field $\bu$ at $t=0.1$, $0.5$, $1.2$, $2.5$, $15$, and $100$ respectively
(left to right and top to bottom) of Example \ref{eg_num2}.}
\label{fig_vector_fem1}
\end{figure}

\begin{figure}[!ht]
\centerline{\hspace{-0.1cm}
\includegraphics[width=0.54\textwidth]{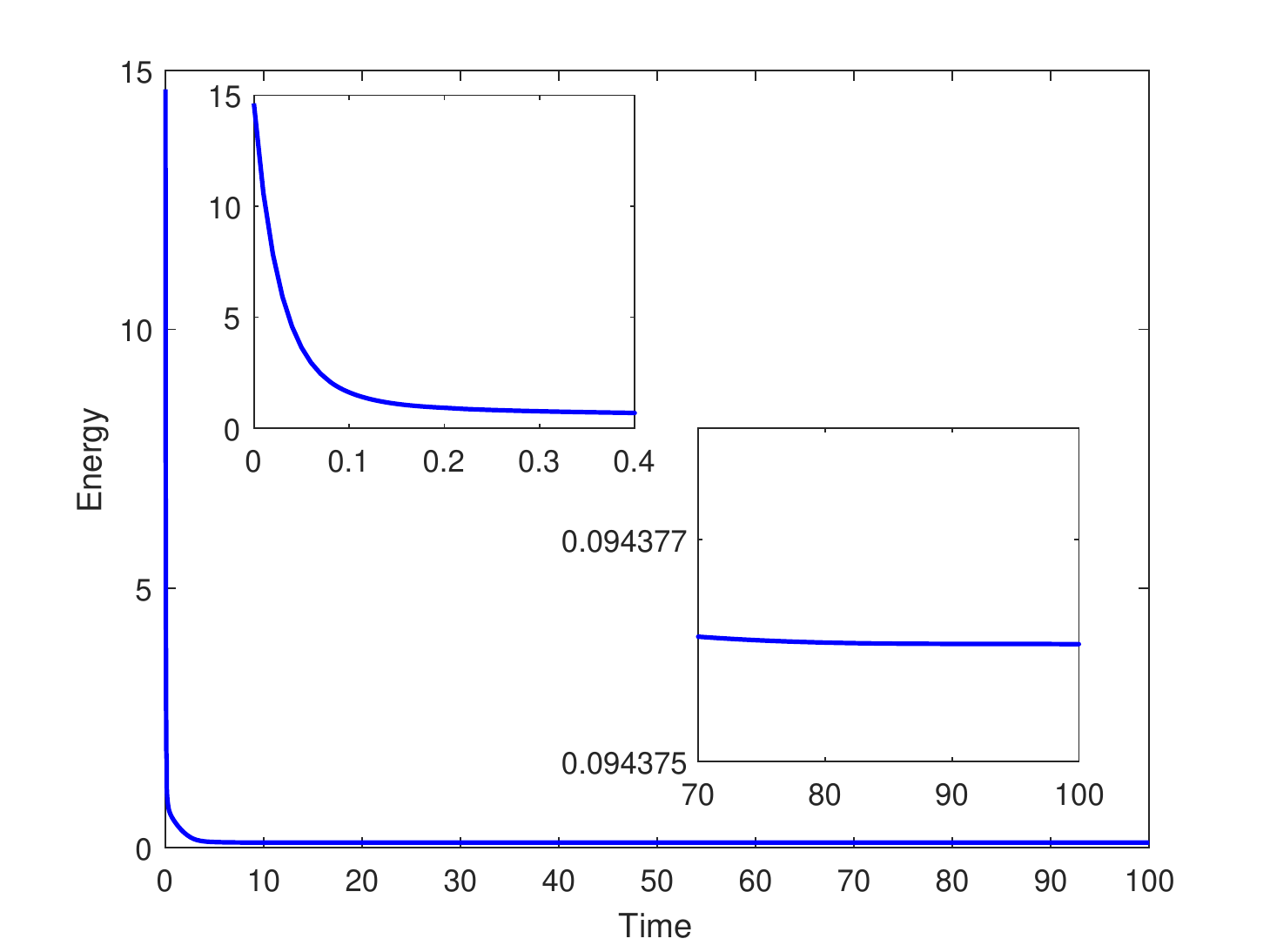}\hspace{-0.5cm}
\includegraphics[width=0.54\textwidth]{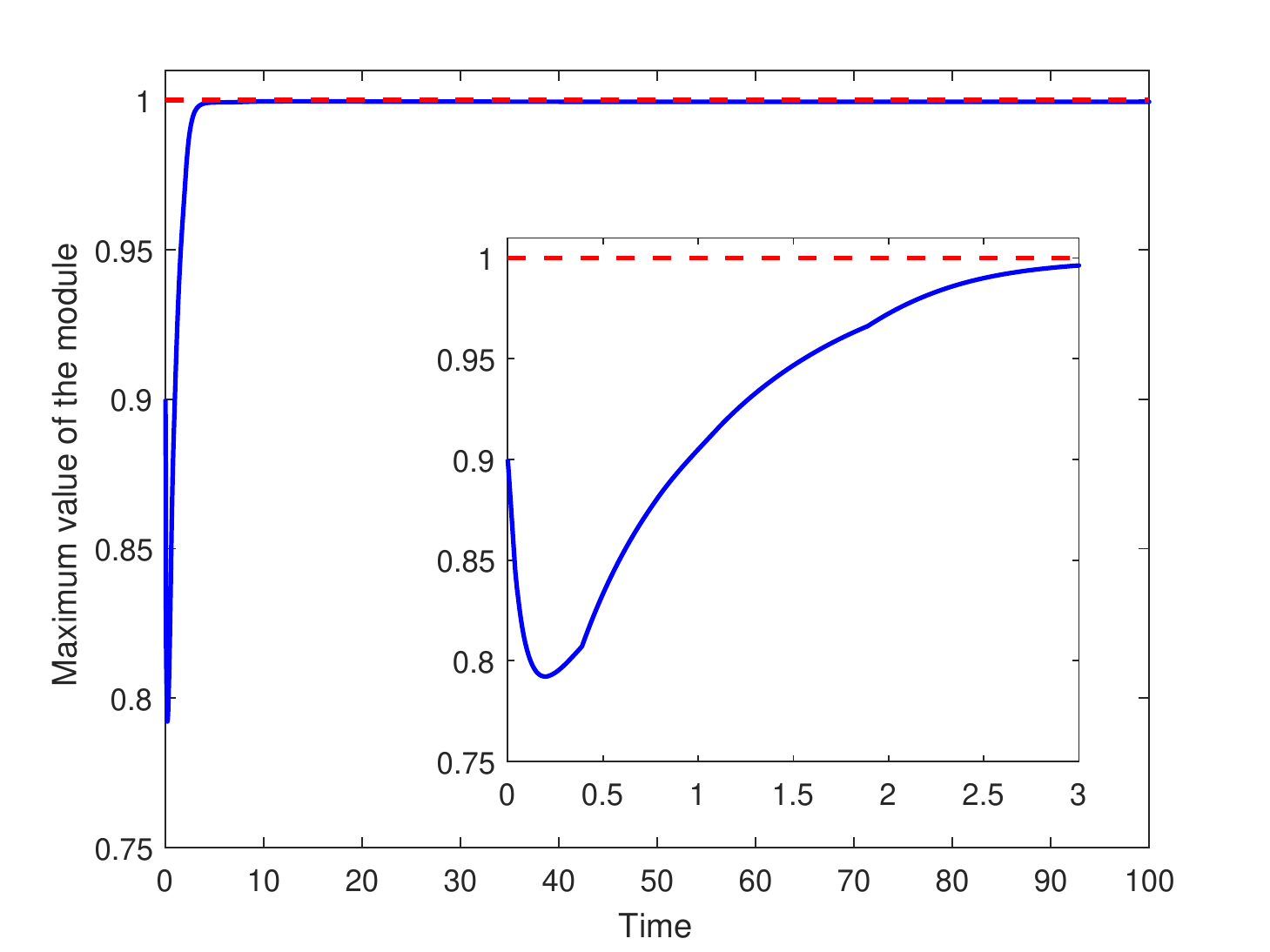}}
\vspace{-0.2cm}
\caption{Evolutions of the energy of the simulated vector field $\bu$ (left) and the maximum value of $|\bu|$
over the interior nodes  (right) in Example \ref{eg_num2}.}
\label{fig_maxmod}
\end{figure}

\section{Concluding remarks}
\label{sect_conclusion}

In this work, we establish an abstract mathematical framework for studying
MBPs of a class of semilinear parabolic equations subject to a variety
of boundary conditions, as well as unconditionally MBP-preserving temporal approximation schemes, ETD1 and ETDRK2,
based on the exponential integrator with a suitably chosen generator of a contraction semigroup.
The motivation is to reveal essential characteristics
of the semilinear parabolic equations having the MBPs
and to analyze the fundamental conditions under which the ETD schemes
unconditionally preserve the MBPs of the underlying problems.
We conclude that, to ensure the MBP of the model equation \eqref{model_eq} and the ETD schemes,
a crucial condition is the dissipation property of the linear operator stated in Assumption \ref{assump_L}
and the sign change property of  the nonlinear term stated in Assumption \ref{assump_f}.
The main results presented in this paper significantly generalize those in \cite{DuJuLiQi19} in many aspects.
The existence of MBP-preserving ETDRK schemes of even higher-order defined in other approaches
is an open question and remains as one of our future works.

Extensions of the framework to the cases of complex-valued,
real vector-valued and matrix-valued equations in the space-continuous setting
are also carried out by considering the nonlinear operators taking the double-well-like forms
and either Dirichlet, homogeneous Neumann, or periodic boundary condition,
where the MBPs are proposed with respect to the vector and matrix $2$-norms, respectively.
We also note that whether the matrix-valued equation \eqref{matrix_AllenCahn} with nonhomogeneous Dirichlet boundary condition
has the MBP with respect to the matrix $2$-norm is still an open question and needs to be further studied.
The main difficulty resides in that
the matrix $2$-norm of the matrix case behaves essentially differently from
the absolute value in the scalar case or the magnitude in the vector case.
Moreover, it is worth noting that
the matrix-valued equation \eqref{matrix_AllenCahn} is derived in \cite{OsWa20} as
the $L^2$ gradient flow under some energy functional
with the nonlinear part as a penalty term for matrix-valued fields which do not take orthogonal matrix values.
The penalty term takes the form $|I_m-U^TU|_F^2$,
measuring the difference between a matrix $U$ and an orthogonal matrix
in the sense of Frobenius norm ($F$-norm),
so it seems more natural to consider the MBP with respect to the $F$-norm.
Actually,  it is not hard to prove that,
if the $F$-norm of the initial data is not greater than $\sqrt{m}$,
then nor is the solution of the equation \eqref{matrix_AllenCahn} at any time.
However, unlike the case of $2$-norm,
the boundedness of the $F$-norm of the solution
is not sufficient to bound the nonlinear part in the splitting form;
thus, whether this $F$-norm-based MBP
could be preserved by the ETD schemes is still an open question.

It is also worth mentioning that, apart from the ETD method,
the integrating factor (IF) method is also a widely-used temporal integration method based on exponential integrators.
While the ETD method only approximates the nonlinear terms as mentioned above,
the IF method often applies numerical quadrature rules to the whole integrand.
The IF method was introduced by Lawson \cite{Lawson67} to solve ODE systems with large Lipschitz constants
and applied to some problems with stiff linear part and nonstiff nonlinear part,
such as the reaction-diffusion problems \cite{JuLiLe14,LiNi10,NiWaZhLi08}
and the advection-diffusion problems \cite{IsGrGo18,LuZh17,ZhOvLiZhNi11}.
Due to highly different decaying rates of the exponential integrator components,
the ETD schemes are usually more accurate than the IF ones for highly stiff systems.
It is also the case that, if the nonlinear function is given by a constant (e.g., $f[u]\equiv c$),
the ETD schemes can produce the exact solution to \eqref{intro_model}
while only approximate solutions are obtained by the IF schemes.
One may ask whether, similar to ETD schemes, the IF schemes could preserve the MBPs.
Related to this question,
the authors of \cite{IsGrGo18} focused on the property of strong stability-preserving (SSP) \cite{GoShTa01}
for the IF Runge--Kutta (IFRK) schemes.
SSP is a stronger stability than the MBP considered here.
In fact, if we weaken the assumptions given in \cite{IsGrGo18} appropriately,
we could obtain MBP-preserving IFRK schemes {under some suitable constraint on the time step size}.
Recall that a scheme is SSP means that if the linear operator $\hL$ satisfies
\begin{equation}
\label{SSP_linear}
\|\e^{\dt\hL}\|\le1,\quad \forall\,\dt>0,
\end{equation}
which is exactly Lemma \ref{lem_L_semigroup}-(ii),
and there exists some $\dt_0>0$ such that the nonlinear mapping $f$ satisfies
\begin{equation}
\label{SSP_nonlinear}
\|w+\dt f[w]\|\le\|w\|,\quad\forall\,w\in \hX,\ \dt\le\dt_0,
\end{equation}
then the solution always satisfies $\|v^{n+1}\|\le\|v^n\|$ for any $\dt\le\dt_0$.
Instead of \eqref{SSP_nonlinear}, if one makes an assumption on $f$ as follows:
\begin{equation}
\label{SSP_nonlinear2}
\|w+\dt f[w]\|\le\beta,\quad\forall\,\text{$w\in \hX$ with $\|w\|\le\beta$ and $\dt\le\dt_0$},
\end{equation}
which is actually equivalent to Lemma \ref{lem_nonlinear}-(i) with $\kp=1/\dt$,
With slight modifications to the proof for the IFRK schemes in \cite{IsGrGo18},
one can also conclude that
the IFRK schemes, which are SSP under the conditions \eqref{SSP_linear} and \eqref{SSP_nonlinear},
are also MBP-preserving under the assumptions \eqref{SSP_linear} and \eqref{SSP_nonlinear2} (or say, Assumptions \ref{assump_L} and \ref{assump_f}).
Note that such property of MBP-preserving is only conditional
in the sense that some constraint on the time step size is necessary.
An open question is
whether the MBP could be preserved unconditionally by the IF schemes as the ETD schemes
if an appropriate stabilizer is introduced and this also remains as one of our future works.

\section*{Acknowledgements}
The authors would like to thank the anonymous reviewers for
their constructive comments and suggestions, which have helped us greatly improve this work.
The authors are also grateful to Dr.~Xiaochuan Tian of University of Texas at Austin
and Dr.~Zhi Zhou of The Hong Kong Polytechnic University
for many valuable discussions.

\end{document}